\DeclareMathOperator{\diam}{diam}
\DeclareMathOperator{\aff}{aff}
\DeclareMathOperator{\arsinh}{arsinh}
\newcommand{\eps}{\varepsilon}
\newcommand{\const}{\mathrm{const}}
\newtheorem{theorem}{Theorem}
\newtheorem{prop}{Proposition}
\newtheorem{lemma}{Lemma}
\newcommand{\makeclassicaltheorem}[1]{
\newtheorem*{classicaltheorem-#1}{Theorem #1}}
\newtheorem*{classicaltheorem-F'}{Theorem F$'$}
\newtheorem*{classicaltheorem-G'}{Theorem G$'$}
\theoremstyle{definition}
\newtheorem{definition}{Definition}
\theoremstyle{remark}
\newtheorem{example}{Example}
\newtheorem{remark}{Remark}
\begin{document}

\title
[The infimum of the volumes of convex polytopes is 0]
{The infimum of the volumes of convex polytopes
of any given facet areas is 0}
\author
[N. V.~Abrosimov, E.~Makai, Jr., A. D.~Mednykh, Yu. G.~Nikonorov, G.~Rote]
{N. V.~Abrosimov$^*$, E.~Makai, Jr.$^{**}$, A. D.~Mednykh$^*$, \\ 
Yu. G.~Nikonorov$^{*}$, G.~Rote}

\address{N. V.~Abrosimov,
Sobolev Institute of Mathematics,
Novosibirsk, Acad.\ Koptyug Av.\ 4, 630090, RUSSIA;\newline
Novosibirsk State University,
Novosibirsk, Pirogov St.\ 2, 630090, RUSSIA}
\email{abrosimov@math.nsc.ru}

\address{E.~Makai, Jr.,
A. R\'enyi Institute of Mathematics,
Hungarian Academy of Sciences,
H-1364 Budapest, Pf.\ 127, HUNGARY}
\email{makai.endre@renyi.mta.hu, http://www.renyi.mta.hu/\~{}makai}

\address{A. D.~Mednykh,
Sobolev Institute of Mathematics,
Novosibirsk, Acad. Koptyug Av.\ 4, 630090, RUSSIA;\newline
Novosibirsk State University,
Novosibirsk, Pirogov St.\ 2, 630090, RUSSIA}
\email{mednykh@math.nsc.ru}

\address{Yu. G.~Nikonorov,
South Mathematical Institute
of the V. S. C.
of the Russian Academy of Sciences,
Vladikavkaz, Markus Str.\ 22, 362027, RUSSIA}
\email{nikonorov2006@mail.ru}

\address{G.~Rote,
Institut f\"{u}r Informatik,
Freie Universit\"at Berlin,
Takustr.~9, 14159 Berlin, GERMANY}
\email{rote@inf.fu-berlin.de}

\thanks{$^*$
  The project was supported in part by the State Maintenance Program
  for the Leading Scientific Schools of the Russian Federation (Grant
  NSH-921.2012.1) and by the Federal Target Grant ``Scientific and
  educational personnel of innovative Russia'' for 2009--2013
  (agreement no.~8206, application no.\ 2012-1.1-12-000-1003-014).
\newline
$^{**}$ Research (partially) supported by Hungarian National Foundation for 
Scientific
Research, grant nos. K68398, K75016, K81146.}


\begin{abstract}
We prove the theorem mentioned in the title for ${\mathbb{R}}^n$ where
$n \ge 3$. The case of the simplex was known previously. Also the case
$n=2$ was settled, but there the infimum was some well-defined function
of the side lengths. We also consider the cases of spherical and
hyperbolic $n$-spaces. There we give some necessary
conditions for the existence of a convex polytope with given facet areas
and some partial results about
sufficient conditions for the existence of (convex) tetrahedra.

\vspace{2mm}
\noindent
2010 Mathematical Subject Classification: 52B11 (primary), 52A38, 52A55
(secondary).

\vspace{2mm} \noindent Keywords and phrases: convex polytopes, volume,
Euclidean, spherical, hyperbolic spaces.
\end{abstract}

\maketitle


Studia Scientiarum Mathematicarum Hungarica

DOI: 10.1556/SSc.Math.2014.1292


\section{Preliminaries}


Minimum-area convex polygons with given side lengths are characterized by the 
following theorem of B\"or\"oczky--Kert\'esz--Makai, Jr.


\begin{classicaltheorem-A}[\cite{BKM}]
Let $m \ge 3$ and
$s_m \ge s_{m-1} \ge \dots \ge s_1>0$ and $s_m < s_{m-1}+ \dots +s_1$.
Then the infimum of the areas of convex
$m$-gons in $\mathbb{R}^2$ that have side lengths $s_i$ equals the following
number $A$. This number $A$ is the 
minimum area of all triangles with side lengths
$\sum_{i \in I_1}s_i$, $\sum_{i \in I_2}s_i$, $\sum_{i
\in I_3}s_i$.
The
minimum is taken over
all partitions
$\{ I_1,I_2,I_3\} $ of $\{ 1,\ldots ,m \}$ into
non-empty parts
for which the three resulting side lengths
satisfy the non-strict triangle inequality.
If the cyclic order of the sides is fixed then an analogous statement holds,
where the sides with indices in each of the sets $I_1,I_2,I_3$
form
an arc of the polygonal curve.
\end{classicaltheorem-A}


When we investigate simple polygons instead of  convex polygons,
we have the following result, due to B\"or\"oczky--Kert\'esz--Makai, Jr. and
Nikonorova.


\begin{classicaltheorem-B}[\cite{BKM,Nikonorova}]
Let $m \ge 3$ and $s_m
\ge s_{m-1} \ge \dots \ge s_1>0$ and $s_m < s_{m-1}+ \dots +s_1$.
Then the infimum of the areas of simple
$m$-gons in $\mathbb{R}^2$ that have side lengths $s_i$ equals the following
number $B$. This number $B$ is the
minimum area of all triangles with side lengths
$\sum_{i \in I_1}\varepsilon _is_i$, $\sum_{i \in I_2}
\varepsilon _is_i$,
$\sum_{i \in I_3}\varepsilon _is_i$. 
The
minimum is taken over
all partitions
$\{ I_1,I_2,I_3\} $ of $\{ 1,\ldots ,m \}$ into
non-empty parts, and all signs $\varepsilon _1, \ldots , \varepsilon _m$,
for which the three resulting side lengths are non-negative and
satisfy the non-strict triangle inequality.

Moreover, if this minimum is not $0$ then we may additionally
suppose the following. For each $j \in \{
1,2,3 \} $ the sum $\sum_{i \in I_j}\varepsilon _is_i$ cannot be
written as $\sum_{i \in I_j'}\varepsilon _is_i +
\sum_{i \in I_j''}\varepsilon _is_i$ where $\{ I_j',I_j'' \} $ is a
partition of $I_j$ and where these partial summands are both positive.
\end{classicaltheorem-B}


We remark that the proofs in the two papers were different. Moreover,
in \cite{Nikonorova}  the result is formulated in a special case only, but all
 ingredients of the proof of the general case are present in
\cite{Nikonorova} as well.


In our paper we write ${\mathbb{R}}^n$, ${\mathbb{H}}^n$ and ${\mathbb{S}}^n$
for the \emph{Euclidean, hyperbolic} and \emph{spherical
$n$-space}, respectively.
Theorems A and B extend to ${\mathbb{S}}^2$ and ${\mathbb{H}}^2$ as follows.


\begin{classicaltheorem-C}[\cite{BKM}]
Let $m \ge 3$ and $s_m \ge s_{m-1}
\ge \dots \ge s_1>0$ and $s_m < s_{m-1}+ \dots +s_1$.
Rather than ${\mathbb{R}}^2$ we consider ${\mathbb{H}}^2$ and ${\mathbb{S}}^2$,
but in
case of ${\mathbb{S}}^2$
we additionally suppose $\sum\limits_{i=1}^m s_i \le \pi $.
Then in both cases the word-for-word analogues of Theorems A and B hold
for ${\mathbb{H}}^2$ and ${\mathbb{S}}^2$.
\end{classicaltheorem-C}


In each of these three theorems
the question of finding the infimum is
reduced to finding the minimum of a set of non-negative numbers
whose
cardinality is bounded by a function of $m$.
In Theorems A and B, this bound is 
$3^m$ and $6^m$, respectively.
In Theorem A with given
cyclic order of the sides, the bound is $\binom{m}{3}$. In Theorem C, the
 bounds are the same as for
Theorems A and~B.


\smallskip
B\"or\"oczky et al.~\cite{BKM}
posed the question whether it is
possible to extend these theorems to
dimensions $n \ge 3$. Their conjecture was that, analogously to the
two-dimensional case, the solutions would be given as the volumes of some
simplices. Unfortunately, they were unaware of the fact that the case of
simplices already had long ago been solved, namely in 1938, as we will
describe below.


The analogous problem about the
maximal volume of simplices with given facet areas
(an isoperimetric-type problem)
was solved by Lagrange \cite{La}
in 1773 for ${\mathbb{R}}^3$ and 
by Borchardt \cite{Borch} in 1866 for ${\mathbb{R}}^n$. 
A simplex is called
{\emph{orthocentric}} if it has an orthocentre, i.e., a common
point of all altitudes. 
It can be characterized also as a simplex where
any two disjoint edges (or, equivalently, any two
disjoint faces of dimension at least 1)
are orthogonal. For this reason, an orthocentric simplex is sometimes also
called {\it{orthogonal}}, although orthocentric is the presently used
terminology. For a relatively 
recent exposition of the above-mentioned facts and some other properties of
orthocentric simplices, see for example
\cite{Ge}. See also 
the recent paper \cite{EHM}, whose first part is a comprehensive survey 
about orthocentric simplices. There 
 it is also stressed that for
many elementary geometrical theorems the objects in ${\mathbb{R}}^n$ 
corresponding to triangles
are not the general simplices but just the orthocentric ones.


\begin{classicaltheorem-D}
[\cite{La, Borch}]
Let $n \ge 3$. Then among the simplices in ${\mathbb{R}}^n$
with given facet areas $S_{n+1} \ge \dots  \ge S_1 >0$ 
\textup(if such simplices exist\textup)
there exists \textup(up to congruence\textup) exactly one simplex
of maximal volume. It is also the \textup(up to congruence\textup) unique
orthogonal simplex with these facet areas.
\end{classicaltheorem-D}


Unaware of the above-mentioned solution of the maximum problem,
 A.~Narasinga Rao~\cite{Rao} posed the following problem 
 in 1937:
\begin{quote}
``The areas of the four facets of a tetrahedron are $\alpha, \beta, \gamma,
\delta$. Is the volume determinate?
If not, between what limits does it lie?''
\end{quote}

This problem was soon solved independently by Venkatachaliengar, Iyengar,
Au\-luck, and Iyengar--Iyengar 
\cite{Venk, Iye, Auluck,IyeIye}.
In fact, under the above hypothesis the volume is not determined
(supposing that such
tetrahedra exist).
Moreover, they reproved that there is up to congruence
exactly one tetrahedron of maximal volume with the given facet
areas, which is also orthogonal --- and they reproved that 
it is also the unique orthocentric tetrahedron with
these facet areas. Moreover, they proved that there exists a tetrahedron
with the given face
areas that has an arbitrarily small volume.
A generalization of  
their first 
mentioned result
to multi-dimensional Euclidean spaces was
obtained in
\cite{Venk, Iye, IyeIye}. We cite only their statement about the infimum of the
volumes.

\begin{classicaltheorem-E}
[\cite{Venk, Iye, Auluck, IyeIye}]
\label{thm-E}
Let $n \ge 3$ and $S_{n+1}\geq S_n \geq \dots \geq S_1>0$.
Then there exists a simplex in ${\mathbb{R}}^n$
with these $(n-1)$-volumes of the facets
if and only if
$$
S_{n+1}< S_1 +S_2+\dots +S_n.
$$
If this inequality holds,
then, for
any $\varepsilon >0$, there is a nondegenerate simplex in ${\mathbb{R}}^n$
with facet areas $S_1,S_2,\dots,S_{n+1}$ and
 volume at most~$\eps$. 
\end{classicaltheorem-E}


The proof of Theorem E by
Iyengar and Iyengar~\cite{IyeIye} was based on the
following statement, which is valid for simplices only
\cite[p.~306]{IyeIye}.
Let 
$T$ be a simplex
with facet areas $S_1,S_2,\dots,S_{n+1}$
and respective outer unit facet normals
$u_1, \dots ,u_{n+1}$. Then we have $\sum _{i=1}^{n+1} S_iu_i=0$. 
Let us consider an 
$(n+1)$-gon $P \subset
\mathbb{R}^n$ with side vectors
$S_1u_1, \dots ,S_{n+1}u_{n+1}$. Its convex hull  $T'$ is then
also a simplex, whose volume is invariant under
permutations of the side vectors of $P$. Moreover, for the volumes of
$T $ and $T '$, we have
$V(T )^{n-1}=V(T ')[(n-1)!]^2/n^{n-2}$.
Based on this relation and  some calculations,
Iyengar and Iyengar
 could make
$V(T ')$
arbitrarily small. However,
this can also be done by choosing $u_1, \dots ,u_{n+1}$ in a small
neighbourhood of the $x_1 \ldots x_{n-1}$-coordinate hyperplane. See also the
first and third proofs of our Theorem~\ref{main2}.


The question of maximal volume of polytopes with given facet areas is much
less understood. 

For non-degenerate 
polytopes in ${\mathbb{R}}^n$ with given facet areas and given
facet outer unit normals, we have the following
result of Brunn~\cite{Br}, see also \cite[\S 10.5]{Mo}.
The maximal volume is attained for the (up to
translation) unique
\emph{convex} polytope with these given facet areas and given facet outer 
unit normals.
(For coinciding facet outer unit normals, one has to add their areas.)
 This result was 
rediscovered in \cite[Theorems 2 and~3]{BBMP} 
 and applied to solve another problem.
In crystallography, this set of maximal volume 
is called the {\it{Wulff shape}} \cite{Wu}. It
minimizes total surface energy of the crystal and is always convex.
For a nice description of the interplay of mathematics and crystallography see
\cite[\S 10.11]{Boro}.

For any given number $m$ of facets and fixed total surface area,
the polytope of largest volume
has an inball and
the facets must touch the inball
at their centroids (Lindel\"of's theorem, see \cite[p.~43]{St} or 
\cite[II.4.3, p.~264; English ed. IX.43, p.~283]{FT2}.

Now let us restrict 
our attention to ${\mathbb{R}}^3$. L. Fejes T\'oth
\cite[Theorem 1, p.~175]{FT1} (see also
\cite[II.4.3, p.~265, Satz; English ed. IX.43, p.~283, Theorem]{FT2})
asserts that among (convex) polyhedra with given surface area and
$m=4$, $6$, and
$12$ faces, the largest volume is attained for the regular tetrahedron,
cube, and regular dodecahedron, respectively. He gave a bound on the
maximum volume \cite[p.~175]{FT1}, valid for each
$m \ge 4$, which is also asymptotically
sharp for $m \to \infty $.
 For $m=5$, the extremal
polyhedron is the regular triangular prism that has an inball 
\cite[p.~41]{St}. A recent complete and simple proof of this fact is given
in \cite[Theorem 5.10]{GMW}.
However, for $m=8$ and $m=20$,
the extremal polyhedron is not the
regular octahedron and icosahedron, respectively \cite[p.~234]{Go}.
For more information about
this isoperimetric problem about convex polyhedra in ${\mathbb{R}}^3$
with given number of faces,
see the old survey in \cite{Go} or
the recent survey in the introduction of~\cite{TGL}. For recent numerical
results (examples) about the isoperimetric problem for polyhedra, with large
symmetry groups, see \cite{LGT}.

A different problem is to maximize the volume
enclosed by a given surface that may be bent (isometrically)
but not stretched.
A theorem of S. P. Olovianishnikoff says the following.
For convex bodies $P,Q \subset {\mathbb{R}}^3$, where $P$ is a convex 
polyhedron, any mapping of $\partial P$ to $\partial Q$ preserving the
geodesic distance of every pair of points of $\partial P$ (i.e., the length of
the shortest arc in $\partial P$ joining these points) extends
to an isometry of ${\mathbb{R}}^3$. See 
\cite[Ch. 3, \S 3,
{\bf{2}}, p.~150, Satz 1]{Alex2} for a special case, and
S. P. Olovianishnikoff
\cite{Ol}, p. 441, Theorem for the general case described above. 
For convex bodies $P,Q \subset {\mathbb{R}}^3$ where $\partial P$ is of
class $C^2$, the analogous theorem holds. See \cite[Ch. 8, \S 5,
p.~337]{Alex1} for a special case and A. V. Pogorelov
\cite[Introduction, \S 1, A, p.~8,
Theorem 1, and Ch. 3, {\bf{3}}, p.~66, Theorem 1]{Po1}
for the general case described above.

However, this uniqueness theorem does not say that this unique
convex polyhedron would have the maximal volume.
The opposite is true: the surface of \emph{every} (not necessarily convex)
polyhedron
can be isometrically deformed to increase the enclosed volume \cite{Pak}.
 For example, the cube can be
 ``blown up'': the face centers move
outwards
and the vertices move closer to the center. The face diagonals
maintain their original length, but
the original edges of the cube are longer than necessary:
they become crumpled, with wrinkles perpendicular to the original edge.
Globally, the polyhedron becomes more ``ball-like''.
 This volume-increasing
phenomenon for convex bodies was first observed by
A. V. Pogorelov in the theory of thin shells in mechanics
\cite{Po2,Po3}. (A short summary
of the results of \cite{Po2} and of some other related results is
given in
\cite{PoBa}.)
An animation showing a deformation of the cube with a volume increase by a
factor of
about 1.2567 has been produced by Buchin and Schulz~\cite{BS}.
The problem of enclosing the largest volume with the surface
of a given convex polyhedron, possibly under the constraint of
preserving the original symmetries, has been treated in many papers
\cite{THL1,THL2,Milka,BuZa,Bleecker,VAAleks,Pak,MG} (``inextensional'' in the
title of \cite{THL1} means ``isometric w.r.t. the geodesic distance'').
(According to a private communication
from the second author of \cite{MG}, 
in the tableau summarizing the numerical results in pp.~154 and~181, the values
in the middle column for the dodecahedron and the icosahedron 
are not correct. They are actually smaller than the values 
in the third column, which are proved in \cite{MG}, and
those are the best published values.)
For a recent survey on this and related questions see~\cite{Sab}.


\subsection*{Notations.}
 In this paper,
$V(\cdot )$ denotes volume of a set,
$S( \cdot )$ its surface area, $\diam( \cdot )$ its diameter,
$\aff(\cdot )$ its affine hull, 
lin\,$(\cdot )$ its linear hull, 
and
$\partial ( \cdot )$ its boundary. If we
want to indicate also
the dimension $n$ then we will write $V_n( \cdot )$ for the
$n$-volume. Sometimes we will refer to the $(n-1)$-volume in ${\mathbb{R}}^n$,
${\mathbb{H}}^n$ or ${\mathbb{S}}^n$ as \emph{area}.
We write $\kappa _n$ for the volume of the unit ball in ${\mathbb{R}}^n$.
For $x,y$ in ${\mathbb{R}}^n$, ${\mathbb{H}}^n$ or ${\mathbb{S}}^n$,
we write $[x,y]$ for the segment and
$\ell (x,y)$ for the line joining $x$ and $y$.
 On
${\mathbb{S}}^n$, $x$ and $y$ must not be antipodes, and
we mean by $[x,y]$ the minor arc on the great circle through $x$ and $y$.
The line $\ell (x,y)$ is well-defined only for $x\ne y$ --- writing 
$\ell (x,y)$ we suppose $x\ne y$.
We denote the distance between $x$ and $y$ by~$|xy|$.


For standard facts about convex bodies we refer to \cite{Schn}.


\section{New Results}

\subsection{Euclidean Space}

The following theorem can be considered as folklore, but we could
not locate a proof. For completeness, we state and prove it.

\begin{theorem}\label{main3}
Assume that $m>n\geq 3$ are integers,
and consider any sequence of numbers $S_{m}\geq S_{m-1} \geq
\dots \geq S_1>0$.
Then the following statements are equivalent:
\begin{enumerate}
\item [\rm(i)]
There exists a non-degenerate polytope $P\subset \mathbb{R}^n$ with $m$ facets
and with facet areas $S_1,S_2,\dots, S_m$.

\item [\rm(ii)]
There exists a non-degenerate convex polytope $P\subset \mathbb{R}^n$ with $m$
facets and
with facet areas $S_1,S_2,\dots, S_m$.

\item [\rm(iii)]
 The inequality  $S_{m}< S_1 +S_2+\dots +S_{m-1}$ holds.
\end{enumerate}
If we also allow degenerate polytopes in 
\thetag i or \thetag {ii}, 
then they imply,
rather than~\thetag {iii},
\begin{enumerate}
\item [\rm(iii$'$)]
 $S_m \le S_1+ \dots +S_{m-1}$ with equality if and only if the polytope
degenerates into the doubly counted facet with area $S_m$.
\end{enumerate}
\end{theorem}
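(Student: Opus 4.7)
$\textbf{Proof plan.}$

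The implication $(\mathrm{ii})\Rightarrow(\mathrm{i})$ is immediate. For $(\mathrm{i})\Rightarrow(\mathrm{iii}')$, my plan is to exploit the divergence-theorem identity $\sum_{i=1}^{m}S_iu_i=0$, valid for every bounded polyhedron $P\subset\mathbb{R}^n$ with outer unit facet normals $u_i$ and facet areas $S_i$ (integrate constant vector fields over the enclosed region). Applying the triangle inequality to $S_mu_m=-\sum_{i<m}S_iu_i$ yields $S_m\le\sum_{i<m}S_i$; equality forces $u_i=-u_m$ for every $i<m$, so every facet of $P$ lies in a hyperplane orthogonal to $u_m$, and since $P$ is bounded it collapses to the doubly-counted flat $(n-1)$-polytope of area $S_m$ that constitutes the degenerate equality case in $(\mathrm{iii}')$. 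Non-degeneracy excludes this case and yields the strict inequality $(\mathrm{iii})$.

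For the main direction $(\mathrm{iii})\Rightarrow(\mathrm{ii})$, my plan is to invoke Minkowski's classical existence theorem: given pairwise distinct unit vectors $u_1,\dots,u_m\in\mathbb{R}^n$ that positively span $\mathbb{R}^n$ together with positive reals $S_1,\dots,S_m$ satisfying $\sum S_iu_i=0$, there exists a convex polytope in $\mathbb{R}^n$ with outer facet normals $u_i$ and facet areas $S_i$. The task therefore reduces to constructing such a configuration of unit vectors from the given data.

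I would first set $u_m=e_n$ and parameterize $u_i=\alpha_ie_n+\sqrt{1-\alpha_i^2}\,v_i$ for $i<m$ with $v_i\in S^{n-2}\subset e_n^\perp$, so that $\sum_{i<m}S_iu_i=-S_me_n$ splits into a scalar equation in the $\alpha_i$ and a vector equation for the $v_i$ in $\mathbb{R}^{n-1}$. In the \emph{balanced} case $S_{m-1}\le S_1+\dots+S_{m-2}$, the uniform choice $\alpha_i\equiv-S_m/(S_1+\dots+S_{m-1})\in(-1,0)$ reduces the vector equation to $\sum_{i<m}S_iv_i=0$ in $\mathbb{R}^{n-1}$, which is solvable by the standard weighted triangle-inequality construction (using $n-1\ge2$). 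In the \emph{unbalanced} case $S_{m-1}>S_1+\dots+S_{m-2}$, I would let $\alpha_{m-1}$ differ from the common value of the remaining $\alpha_i$ and tune this one-parameter family via the intermediate value theorem so that the norm condition in the vector equation can be matched; hypothesis (iii) combined with $S_m\ge S_{m-1}$ guarantees that the admissible parameter interval is non-empty (and in fact the ratio $\sqrt{1-\beta^2}/\sqrt{1-\alpha^2}$ runs over $(0,\infty)$ as the parameter varies between its endpoints).

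Finally, I need the $u_i$ to be pairwise distinct and to positively span $\mathbb{R}^n$, as required by Minkowski's theorem. Distinctness of $u_m$ from the others is automatic from the opposite signs of the $e_n$-components, and the remaining requirements reduce to analogous genericity conditions for the $v_i$ in $\mathbb{R}^{n-1}$. These conditions hold on a dense open subset of the positive-dimensional solution manifold $\{\sum S_iv_i=0\}\subset(S^{n-2})^{m-1}$: the ``coincidence'' and ``lies-in-a-hyperplane'' exceptional loci are strictly lower-dimensional exactly when $m>n$, our hypothesis. Minkowski's theorem then produces a convex polytope with $m$ distinct facets of areas $S_1,\dots,S_m$, establishing $(\mathrm{ii})$. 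The hard part throughout is the unbalanced case, where the uniform lifting strategy breaks down and must be rescued by the intermediate-value tuning above.
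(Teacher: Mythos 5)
Your proposal is correct in outline, and the easy implications match the paper: (ii)$\Rightarrow$(i) is trivial, and (i)$\Rightarrow$(iii$'$) is exactly the paper's argument (triangle inequality applied to $S_mu_m=-\sum_{i<m}S_iu_i$, with equality forcing all normals parallel and hence degeneration). Where you genuinely diverge is the construction behind (iii)$\Rightarrow$(ii). The paper's route (its Proposition 4) is to build a planar convex $m$-gon with side lengths $S_1,\dots,S_m$ in a $2$-plane of $\mathbb{R}^n$ and then perturb its vertices, keeping side lengths fixed, until the edge vectors span $\mathbb{R}^n$; the perturbed edge vectors, normalized, are fed to Minkowski's theorem. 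You instead pin $u_m=e_n$ and lift the remaining normals onto one or two cones about the axis, splitting the balance equation into an axial scalar equation and a horizontal closing condition in $\mathbb{R}^{n-1}$. Both are legitimate; the paper's version has the extra virtue of placing all normals near a $2$-plane, which is what it reuses for Theorem~2 (arbitrarily small volume), whereas your cone construction serves Theorem~1 alone. Two loose ends you should repair when writing this up. First, your case split must be on \emph{strict} inequality: if $S_{m-1}=S_1+\dots+S_{m-2}$, the uniform-$\alpha$ lifting forces all $v_i$ ($i\le m-2$) to coincide and $v_{m-1}$ to be their antipode, so the normals lie in a $2$-plane and Minkowski's nondegeneracy condition fails; this boundary case must be sent to your intermediate-value branch (which does handle it). Second, the claim that the spanning and distinctness conditions hold on a dense open subset of $\{\sum S_iv_i=0\}\subset(S^{n-2})^{m-1}$ is asserted by a dimension count that, while it does check out (the codimension of the locus where the $v_i$ lie in a $k$-plane, $k<n-1$, works out to $(n-1-k)(m-2-k)>0$ precisely because $m>n$), is not a proof as stated: one still has to show the solution set is not entirely contained in the exceptional locus. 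The paper's Proposition 4 spends its entire proof on exactly this point, via an explicit iterative perturbation argument; you would need either that argument or a careful transversality/Sard-type justification in its place.
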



\begin{theorem}\label{main2}
Let $m>n \ge 3$ be integers.
Let $\varepsilon>0$ and 
$S_{m}\geq S_{m-1} \geq \dots \geq S_1>0$ be a sequence of numbers
such that  $S_{m}<
S_1 +S_2+\dots +S_{m-1}$. Then
there exists a non-degenerate convex
polytope $P \subset \mathbb{R}^n$ with $m$ facets and
with facet areas $S_1,S_2,\dots, S_m$
and with volume $V(P) \le \varepsilon $. 
\end{theorem}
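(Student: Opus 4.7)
The plan is to apply Minkowski's existence theorem (the result of Brunn quoted in the introduction) with outer unit facet normals clustering close to the hyperplane $x_n=0$, as suggested in the commentary after Theorem~E, and then to estimate the volume by a vertical rescaling argument.

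\emph{Choice of normals.} Since $n-1\ge 2$ and $S_m<S_1+\dots+S_{m-1}$, I first choose unit vectors $\bar w_1,\dots,\bar w_m\in S^{n-2}\subset\mathbb{R}^{n-1}$ with $\sum_iS_i\bar w_i=0$ that positively span $\mathbb{R}^{n-1}$; the triangle-like inequality is precisely the solvability condition for this in dimension at least $2$, which is where the hypothesis $n\ge 3$ is essential. I also pick real numbers $c_1,\dots,c_m$ with $\sum_iS_ic_i=0$ and with both signs occurring, which is easy since $m\ge n+1\ge 4$. For each large parameter $T>0$, a standard implicit-function perturbation of the approximate normals $(\bar w_i,c_i/T)$ yields unit vectors $u_i(T)\in S^{n-1}$ with $\sum_iS_iu_i(T)=0$ exactly, with $(u_i(T))_n$ of order $1/T$, and positively spanning $\mathbb{R}^n$. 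Applying Minkowski's existence theorem then produces a convex polytope $P_T\subset\mathbb{R}^n$, unique up to translation, with outer unit facet normals $u_i(T)$ and facet areas $S_i$.

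\emph{Volume estimate via vertical rescaling.} To show $V(P_T)\to 0$, I apply the linear map $L_T(x_1,\dots,x_n):=(x_1,\dots,x_{n-1},x_n/T)$ of determinant $1/T$, and set $Q_T:=L_T(P_T)$. The area transformation rule $V_{n-1}(L_T(F))=|\det L_T|\cdot\|L_T^{-T}u\|\cdot V_{n-1}(F)$ for a facet $F$ with unit normal $u$ shows that the outer unit normals of $Q_T$ converge to $\bar u_i:=(\bar w_i,c_i)/\sqrt{1+c_i^2}$, while the corresponding facet areas of $Q_T$ equal $S_i\sqrt{1+c_i^2}/T+o(1/T)$. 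The limit data $(\bar u_i,\hat S_i)$ with $\hat S_i:=S_i\sqrt{1+c_i^2}$ satisfy $\sum_i\hat S_i\bar u_i=\sum_iS_i(\bar w_i,c_i)=0$, and the $\bar u_i$ positively span $\mathbb{R}^n$, so Minkowski's theorem produces a fixed limit polytope $P^*$. By continuous dependence of the Minkowski polytope on its data, the rescaled polytopes $T^{1/(n-1)}Q_T$ converge in the Hausdorff sense (up to translation) to $P^*$, whence $V(Q_T)\sim V(P^*)/T^{n/(n-1)}$, and
$$V(P_T)=T\cdot V(Q_T)\sim \frac{V(P^*)}{T^{1/(n-1)}}\xrightarrow{T\to\infty}0.$$
Choosing $T$ large gives $V(P_T)<\varepsilon$.

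The main technical issues are the exact solvability of the perturbation (a routine implicit-function argument at a nondegenerate limit configuration) and the continuous dependence of Minkowski's construction on its data (classical, via strict convexity of the appropriate mixed-volume functional); neither presents a serious obstacle. The essential role of $n\ge 3$ is to provide continuous, rather than purely combinatorial, freedom in the choice of the horizontal limit normals $\bar w_i$; this is precisely what distinguishes the construction from the planar case, where the area of a triangle with given side lengths is rigidly determined.
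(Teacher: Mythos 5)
Your proposal is correct in its overall strategy but follows a genuinely different route from each of the paper's three proofs, even though it shares the key tool (Minkowski's existence and continuity theorems, Theorems~F$'$ and~G$'$). The paper's first proof also degenerates the normals (toward the $x_1x_2$-\emph{plane}), but establishes $V\to 0$ indirectly: by contradiction, using the Gritzmann--Wills--Wrase inequality to bound the diameter, then compactness and Theorem~G$'$. The second proof reduces to simplices via Theorem~E, and the third builds explicit needle-like polytopes with quantitative bounds (Lemmas~\ref{slope}--\ref{volume1}). You instead send the normals to the \emph{hyperplane} $x_n=0$ and renormalize the degenerating family by the anisotropic map $L_T$, so that the rescaled polytopes converge to a fixed Minkowski polytope $P^*$; this is a direct argument and yields the explicit decay rate $V(P_T)\sim V(P^*)\,T^{-1/(n-1)}$, which matches the sharp order of magnitude $(\sin\varepsilon)^{1/(n-1)}$ in the paper's Lemma~\ref{volume1} (with $\varepsilon\sim 1/T$ the angle of the normals to the hyperplane). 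The area transformation formula $V_{n-1}(L_T(F))=|\det L_T|\cdot\|L_T^{-T}u\|\cdot V_{n-1}(F)$ and the resulting limit data $(\bar u_i,\hat S_i)$ with $\sum_i\hat S_i\bar u_i=0$ are correct. What your argument buys is a clean, quantitative, non-contradiction proof; what it costs is reliance on the continuity of the \emph{inverse} Minkowski map (Theorem~G$'$), which is exactly the point the paper must labor over in the addendum after Theorem~G and in the ``Added in proof'' note.

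One step needs strengthening: requiring only that the $c_i$ satisfy $\sum_iS_ic_i=0$ ``with both signs occurring'' does not guarantee that the limit normals $\bar u_i=(\bar w_i,c_i)/\sqrt{1+c_i^2}$ span $\mathbb{R}^n$. If $c_i=\ell(\bar w_i)$ for some linear functional $\ell$ on $\mathbb{R}^{n-1}$, then $\sum_iS_ic_i=\ell\bigl(\sum_iS_i\bar w_i\bigr)=0$ holds automatically, the $c_i$ may well take both signs, and yet all $\bar u_i$ lie in the linear hyperplane $\{(w,t):t=\ell(w)\}$, so condition (ii) of Theorem~F$'$ fails for the limit measure and $P^*$ does not exist. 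The fix is easy: the constraint $\sum_iS_ic_i=0$ defines an $(m-1)$-dimensional space with $m-1\ge n$, while the exceptional set $\{(\ell(\bar w_i))_i:\ell\in(\mathbb{R}^{n-1})^*\}$ has dimension at most $n-1$, so a suitable $(c_i)$ exists; with such a choice the perturbed $u_i(T)$ also span $\mathbb{R}^n$ for large $T$. The remaining glossed-over details (pairwise distinctness of the $u_i(T)$, and the $O(1/T^2)$ correction needed after normalizing $(\bar w_i,c_i/T)$ to unit length) require the same kind of care as the paper's Proposition~\ref{main}, and present no obstacle.
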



\begin{remark}\label{th2}
This theorem shows that for dimension $n \ge 3$ there are no separate
questions for convex and general polytopes. Recall that 
for dimension $n=2$ these questions had different answers, see 
Theorems {A} and {B}.
\end{remark}


We give three different proofs of Theorem~\ref{main2}. The first one
is independent of Theorem~D and reproves the case of the simplex. It
is an existence proof by contradiction. The second proof uses
Theorem~{D}. It reduces the question to the case of simplices.  Both
proofs rely on delicate convergence arguments (see
Sections~\ref{sec:tools} and~\ref{Euclidean}). The third proof is geometric.  
It constructs examples with small volumes that are like ``needles''.  In
particular we will give an explicit upper bound for the volumes of
our examples in terms of the ``steepness'' of their facets
(Lemmas~\ref{volume} and~\ref{volume1}). If we consider $n,m$ and the facet
areas as fixed then
our estimate is sharp up to a constant factor (see
Lemma~\ref{volume1}).


Note that there is a very interesting dichotomy. In Theorems A and B for
${\mathbb{R}}^2$ (and also in Theorem C for ${\mathbb{H}}^2$ and
${\mathbb{S}}^2$) we have some definite functions of the side lengths as
infima. In Theorem~\ref{main2} for ${\mathbb{R}}^n$ with $n \ge 3$
the infimum does not depend
at all on the facet areas.


\subsection{Hyperbolic Space}

For the hyperbolic case we have a word-for-word analog of the implications 
(ii)$\Longrightarrow $(iii) and (ii)$\Longrightarrow $(iii$'$) from 
Theorem~\ref{main3}
(under the respective hypotheses). 

\begin{prop}\label{necne}
Let $P \subset {\mathbb{H}}^n$ be a polytope with facet areas
$S_m \ge S_{m-1} \ge \dots  \ge S_1 > 0$. Then the inequality
$S_m \le \sum\limits_{i=1}^{m-1} S_i$ holds,
with equality if and only if
$P$ degenerates into the doubly counted facet with area $S_m$.
\end{prop}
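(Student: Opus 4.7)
The plan is to imitate the standard Euclidean argument by projecting orthogonally onto the hyperplane containing the largest facet, using as the key hyperbolic ingredient that the orthogonal projection of $\mathbb{H}^n$ onto a hyperplane is area non-increasing and strictly area-decreasing off that hyperplane. After relabeling, the largest facet is $F_m$ with area $S_m$; set $H_m=\aff(F_m)$. Since $P$ is convex and $F_m$ is a facet, $P$ lies in one of the two closed half-spaces bounded by $H_m$; call it $H_m^+$. Let $\pi\colon H_m^+\to H_m$ denote the orthogonal projection, sending each point to the foot of its perpendicular onto $H_m$.

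The first main step is to show that for every $i\ne m$ the projected region $\pi(F_i)\subset H_m$ has area at most $S_i$, with equality only when $F_i\subset H_m$. This follows from the standard fact that $\pi$ is $1$-Lipschitz and that its differential at any point of $H_m^+$ at positive distance from $H_m$ is a strict contraction in every direction; a convenient way to see this is to work in the upper half-space model and compute the Jacobian of $\pi$, which comes out strictly less than $1$ off $H_m$.

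The second main step is a covering argument: for each $x$ in the relative interior of $F_m$, the geodesic ray through $x$ perpendicular to $H_m$ and heading into $H_m^+$ enters $\mathrm{int}(P)$ locally; since $P$ is bounded, this ray must leave $P$ through some facet $F_{i(x)}$ with $i(x)\ne m$, and the exit point projects back to $x$. Hence $\mathrm{int}(F_m)\subset\bigcup_{i\ne m}\pi(F_i)$, which combined with the previous step yields
\[
S_m\;\le\;\sum_{i\ne m}\mathrm{area}(\pi(F_i))\;\le\;\sum_{i=1}^{m-1}S_i.
\]

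For the equality case, both estimates must be tight. The second being tight forces every facet $F_i$ with $i\ne m$ to lie in $H_m$, and then the convex body $P$ itself is contained in $H_m$; its two ``sides'' (the facet $F_m$ and the union of the remaining facets) must then coincide as subsets of $H_m$, which is precisely the assertion that $P$ has degenerated into the doubly counted facet of area $S_m$. I expect the main technical obstacle to be the careful verification of the area-contraction property (with strictness) for orthogonal projection in $\mathbb{H}^n$; this is folklore but requires either a direct model computation or a synthetic comparison argument via right-angled triangles in the hyperbolic plane.
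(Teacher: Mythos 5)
Your proposal follows essentially the same route as the paper: orthogonal projection onto the hyperplane of the largest facet, the area-contraction property of that projection (strict off the hyperplane), and the covering observation that the projections of the remaining facets contain $F_m$, with the equality case forcing degeneration. The paper establishes the contraction precisely by the synthetic comparison you anticipate as an alternative to the Jacobian computation, namely the trigonometry of Lambert quadrilaterals applied first to surface elements orthogonal to the projecting geodesic and then to tilted ones.
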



Next we give two statements that show the following. The necessary condition in
Proposition~\ref{necne} together with the inequalities $S_i \le \pi $
is not sufficient even for the existence of a tetrahedron
in ${\mathbb{H}}^3$ with these facet areas.
That is, there are some further necessary conditions.
Recall that the area of a simple $k$-gon in
${\mathbb H}^2$ is bounded by $(k-2)\pi $.


\begin{prop}\label{necne'}
Let us admit polyhedra in
${\mathbb H}^3$ whose vertices are all distinct but which possibly
have some
infinite vertices. Then a non-degenerate
polyhedron with facet areas $S_m,S_{m-1}, 
\dots , S_3$ maximal \textup(i.e., $(k-2)\pi $ for a $k$-gonal face\textup) 
but with
facet areas $S_2,S_1$ not maximal
does not exist.
\end{prop}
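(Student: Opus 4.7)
The plan is to combine the Gauss--Bonnet formula for hyperbolic polygons with an elementary incidence argument at the vertices of $P$. The key input is that a simple $k$-gon in $\mathbb{H}^2$ with interior angles $\alpha_1,\dots,\alpha_k$ (where one sets $\alpha_i=0$ at an ideal vertex) has area $(k-2)\pi-\sum_i\alpha_i$, so the maximum value $(k-2)\pi$ is attained precisely when every vertex of the polygon is ideal.

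First I would translate this characterisation into the hypothesis of the proposition: every vertex that lies on any of the faces $F_3,\dots,F_m$ must be an ideal vertex of $P$, whereas $F_1$ and $F_2$ each must carry at least one finite vertex. Next I would inspect the local combinatorics of $P$ at such a finite vertex $v$. Because $P$ is a non-degenerate three-dimensional polyhedron whose faces are simple polygons with pairwise distinct vertices, each face meets $v$ in at most one corner, and at least three distinct faces of $P$ are incident to~$v$. If any $F_i$ with $i\ge 3$ were one of them, then $v$ would be forced to be ideal, contradicting its choice; hence all faces through $v$ come from the two-element set $\{F_1,F_2\}$, which is too few.

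This contradiction forces $F_1$ (and symmetrically $F_2$) to have only ideal vertices. But then by the Gauss--Bonnet characterisation above, the areas $S_1$ and $S_2$ would themselves be maximal, contrary to hypothesis, so no such polyhedron exists.

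The main obstacle I anticipate is not the argument itself but the book-keeping needed to make sense of ``non-degenerate polyhedron'' once ideal vertices are admitted, and in particular the verification that the minimum vertex-degree is still three in that setting. I would handle this by taking the link at $v$ --- a small geodesic sphere about $v$ if $v$ is finite, or a small horospherical cross-section if $v$ is ideal --- which is a spherical or Euclidean polygon with at least three sides, its sides corresponding to the faces of $P$ through~$v$. A secondary point to address is that ``simple face'' (i.e.\ a simply connected polygon in its affine hull whose boundary cycle visits no vertex twice) is a legitimate part of the definition of ``polyhedron'' here; under the standing assumption that all vertices of $P$ are distinct, this is immediate.
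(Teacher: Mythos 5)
Your proposal is correct and is essentially the paper's own proof: the paper likewise argues that maximality of $S_3,\dots,S_m$ forces all vertices incident to those facets to be ideal, and that no vertex can be incident only to $F_1$ and $F_2$ (since every vertex of a non-degenerate polyhedron meets at least three facets), whence all vertices are ideal and $S_1,S_2$ would also be maximal. Your additional care with the Gauss--Bonnet characterisation and the vertex links merely fills in details the paper leaves implicit.
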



Proposition \ref{necne'} 
would suggest that for polyhedra in ${\mathbb H}^3$, if all facets but
two have areas nearly maximal (i.e., close to
$(k-2) \pi $ for a $k$-gonal face)
then the same statement would hold for the remaining two facets as well.
However, this is not true. Even in the convex case,
these two facets can have areas close to $0$, as shown by the following example.
Consider a very large circle in $\mathbb{H}^2 \subset {\mathbb H}^3$
and a regular $l$-gon
$p_1 \ldots p_l$ inscribed in it ($l \ge 3$).
 Choose $p_{l+1}$ on our circle with
$|p_lp_{l+1}|=\varepsilon $. Then all triangles with vertices among the
$p_i$'s have areas close to $\pi $ except those that contain both $p_l$ and
$p_{l+1}$, and 
those have very small areas. Now perturb these points $p_i$ a little
bit in ${\mathbb H}^3$ so that no four 
lie in a plane. Then
their convex hull is a triangle-faced convex
polyhedron, and the perturbation of the
segment $[p_l,p_{l+1}]$ is an edge of it. (To see this, use the 
collinear model. For any
convex polygon with strictly convex angles,
its edges will remain edges of the convex hull after a sufficiently small 
perturbation.)
The two facets of our polyhedron
incident to this edge have very small areas while all other facets have areas
close to $\pi $, i.e., are nearly maximal.


However, an analogous statement for all but one facets will be shown in the
convex case.


\begin{prop}\label{nonexhyp}
Assume that we have a convex
polyhedron in $\mathbb{H}^3$ with infinite vertices
admitted. Suppose its $m$
facets are  a
$k_m$-gon, $\ldots $, $k_1$-gon and have
respective areas
$S_m \ge \dots  \ge S_2 \ge S_1 >0$.
Then for any $i \in \{ 1, \dots ,m
\} $ we have
$$
(k_i -2)\pi - S_i \le \sum 
_{\substack {1\le j\le m\\ j\ne i}}
(  (k_j-2) \pi - S_j ).
$$
If there is a finite vertex whose incident edges do not lie in a plane, then the
above inequality is strict.
\end{prop}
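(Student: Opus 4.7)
The plan is to recast the quantity $(k_i-2)\pi - S_i$ via the hyperbolic Gauss--Bonnet formula as a sum of interior angles over the finite vertices of $F_i$, and then to apply a spherical polygonal inequality at the vertex link of each finite vertex of $P$. For each facet $F_i$ the hyperbolic area formula reads $S_i = (k_i-2)\pi - \sum_v \alpha(v, F_i)$, where $\alpha(v, F_i) \in (0, \pi)$ is the interior angle of $F_i$ at $v$; an ideal vertex contributes $\alpha(v, F_i) = 0$, since the two edges of $F_i$ incident to an ideal vertex are asymptotic. Writing $V_f(F_i)$ for the set of finite vertices of $F_i$ and setting
$$ D_i := (k_i-2)\pi - S_i = \sum_{v \in V_f(F_i)} \alpha(v, F_i) \ge 0, $$
the desired inequality becomes $D_i \le \sum_{j \ne i} D_j$.

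For each finite vertex $v$ of $P$, a supporting hyperplane of $P$ at $v$ shows that the intersection of $P$ with a small sphere centered at $v$, rescaled to the unit sphere, is a convex spherical polygon $L_v$ contained in a closed hemisphere; its side lengths are precisely the face angles $\alpha(v, F) \in (0, \pi)$ as $F$ ranges over the facets of $P$ containing $v$. The key geometric input is the sharp \emph{spherical polygonal inequality}: in any convex spherical polygon all of whose sides are shorter than $\pi$, every side length is at most the sum of the remaining side lengths, with strict inequality unless all vertices lie on a single great circle. Applied to $L_v$, the equality case says exactly that the incident edges of $P$ at $v$ are coplanar.

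Using this at each $v \in V_f(F_i)$ gives $\alpha(v, F_i) \le \sum_{F \ni v,\, F \ne F_i} \alpha(v, F)$. Summing over $v$ and interchanging the order of summation yields
$$ D_i \le \sum_{v \in V_f(F_i)} \sum_{\substack{F \ni v \\ F \ne F_i}} \alpha(v, F) \le \sum_{F \ne F_i} \sum_{v \in V_f(F)} \alpha(v, F) = \sum_{j \ne i} D_j. $$
For strictness, suppose $v_0$ is a finite vertex of $P$ whose incident edges do not lie in a plane. If $v_0 \in F_i$, the spherical polygonal inequality at $v_0$ is strict, which makes the first inequality above strict. If $v_0 \notin F_i$, pick any facet $F \ni v_0$: then $F \ne F_i$ and $v_0 \in V_f(F) \setminus V_f(F_i)$, so the positive summand $\alpha(v_0, F) > 0$ appears on the right of the second inequality but not on the left, again giving strictness.

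The main obstacle I anticipate is the careful proof of the sharp spherical polygonal inequality, together with the identification of its equality case. I would derive it by observing that the two endpoints of any chosen side are joined on $S^2$ both by that side itself, a geodesic arc of length less than $\pi$ and therefore realizing the spherical distance, and by the concatenation of the remaining sides, a path whose total length equals their sum. Since any path is at least as long as the spherical distance between its endpoints, one obtains the non-strict inequality; equality forces the concatenated path to itself be a geodesic arc, placing all polygon vertices on a common great circle, which via the link--polytope correspondence is exactly the stated coplanarity condition on the edges of $P$ at $v$.
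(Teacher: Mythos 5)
Your proof is correct and follows essentially the same route as the paper: the paper likewise intersects $P$ with an infinitesimally small sphere at each vertex to obtain a convex spherical polygon whose side lengths are the face angles (all in $[0,\pi)$), applies the inequality that one side is at most the sum of the others, sums over the vertices of $F_i$, and translates the resulting angle-sum inequality $t_1\le t_3$ into the stated one via the hyperbolic area formula. Your write-up merely makes explicit some steps the paper leaves terse, namely the Gauss--Bonnet bookkeeping for ideal vertices, the triangle-inequality proof of the spherical polygonal inequality with its equality case, and the case distinction in the strictness argument.
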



In \S 6 Remark 9,
it will be explained that, in a sense, there are no
interesting analogues of Proposition~\ref{nonexhyp}
for $\mathbb{R}^3$ and $\mathbb{S}^3$.


Now we turn to sufficient conditions for the existence of hyperbolic
tetrahedra.


\begin{theorem}\label{exhyp}
Assume that $\pi/2>S_4\geq S_3\geq S_2\geq S_1 >0$,
 $S_4 <S_1+S_2+S_3$, and
one of the inequalities
\begin{equation}\label{fc1}
\tan(S_1/2)> \frac{1-\cos S_4}{2\sqrt{\cos S_4}},
\end{equation}
or
\begin{equation}\label{fc2}
S_4\geq S_3+S_2
\end{equation}
holds.
Then there exists a non-degenerate
tetrahedron $T \subset {\mathbb{H}}^3$ with facet areas
$S_1,\allowbreak S_2,\allowbreak S_3,\allowbreak S_4$. 
\end{theorem}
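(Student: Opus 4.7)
The plan is to construct the required tetrahedron explicitly, handling the two alternative sufficient conditions \eqref{fc1} and \eqref{fc2} by separate arguments that share a common framework. In both cases, I would fix a totally geodesic plane $\Pi \subset \mathbb{H}^3$ and a base triangle $T=ABC \subset \Pi$ of area $S_4$, and seek an apex $D\in\mathbb{H}^3\setminus\Pi$ such that the three apex faces $BCD,\,ACD,\,ABD$ of the tetrahedron $ABCD$ have areas equal to $S_1,S_2,S_3$ in some assignment to the vertices. The face-area map $F(D):=(|BCD|,|ACD|,|ABD|)$ is continuous on $\mathbb{H}^3$ and extends continuously across $\Pi$ (where the tetrahedron degenerates); since both its domain and target are $3$-dimensional, a continuity/topological argument is the natural tool.

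For case \eqref{fc2}, $S_4 \ge S_2+S_3$: the three non-negative numbers $S_4-S_2-S_3,\,S_2,\,S_3$ sum to $S_4$, so there is an interior point $D_0\in T$ realizing them as the areas of the three sub-triangles of $T$ through $D_0$, i.e.\ $F(D_0)=(S_4-S_2-S_3,S_2,S_3)$ in a suitable ordering. Lifting $D_0$ perpendicular to $\Pi$ produces, for small height $h>0$, a non-degenerate tetrahedron; the hyperbolic Pythagorean identity $\cosh|BD|=\cosh h\cdot\cosh|BD_0|$ (and its analogue for $|CD|$) shows that the sides of each apex face strictly lengthen, and one can check that the three face areas strictly grow for small $h$. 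The hypothesis $S_4 < S_1+S_2+S_3$ reads $S_1 > S_4-S_2-S_3$, so the target $(S_1,S_2,S_3)$ lies strictly in the ``lifted'' half-space relative to the degenerate boundary value $F(D_0)$. Varying $D$ in a $3$-dimensional neighborhood of $D_0$ and applying a local degree / implicit function argument should then produce an apex $D$ with $F(D)=(S_1,S_2,S_3)$.

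For case \eqref{fc1}: a short algebraic manipulation---square both sides and use $1-\cos S_4 = 2\sin^2(S_4/2)$ and $\cos^2(S_4/2)=1-\sin^2(S_4/2)$---shows that \eqref{fc1} is equivalent to the cleaner inequality
\[ \sin(S_1/2) > \tan^2(S_4/2). \]
That this reformulation depends only on $S_1$ and $S_4$ strongly suggests a restricted family of tetrahedra in which $S_4$ is the base area and \eqref{fc1} characterizes the feasible range of the smallest face area $S_1$. A natural candidate is the family in which the apex $D$ lies on the perpendicular geodesic to $\Pi$ through a vertex of $T$, so that two of the three apex faces are hyperbolic right triangles; the four parameters (the three sides of $T$ and the apex height) can then be tuned to match the four target areas, with the feasibility boundary of $S_1$ working out to \eqref{fc1}.

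The main obstacle is making the continuity/topological arguments rigorous. In case \eqref{fc2} this is a local degree calculation for $F$ near $D_0\in\Pi$, requiring via hyperbolic trigonometry a verification that the Jacobian of $F$ is non-degenerate there (or, alternatively, a global homotopy/degree argument that the target is covered). In case \eqref{fc1} the harder task is to identify the correct explicit family and to verify rigorously that its feasibility boundary is exactly the equality case $\sin(S_1/2)=\tan^2(S_4/2)$; the asymmetric appearance of only $S_1$ and $S_4$ in \eqref{fc1} makes this case qualitatively different from \eqref{fc2} and likely demands a distinct and more delicate construction.
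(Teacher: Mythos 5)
Your overall strategy (fix a face of area $S_4$, vary the apex, and close the argument with a topological/degree argument) is in the right spirit, but as written it has genuine gaps in both cases, and the two most important ones are the following. First, in your framework the apex map $F(D)=(|BCD|,|ACD|,|ABD|)$ is invariant under reflection in the plane $\Pi$, so its derivative in the direction normal to $\Pi$ vanishes at every point of $\Pi$ (the side lengths satisfy $\cosh|BD|=\cosh h\cdot\cosh|BD_0|$, hence change only to second order in $h$); the Jacobian of $F$ at $D_0$ is therefore singular and neither the implicit function theorem nor a naive local degree computation applies at $D_0$. Worse, a local argument cannot suffice anyway: the target $(S_1,S_2,S_3)$ need not be close to the degenerate value $(S_4-S_2-S_3,S_2,S_3)$, since $S_1$ may be much larger than $S_4-S_2-S_3$, so you are forced into a global degree argument over a large region of apex positions, and you supply no control of $F$ on the boundary of any such region. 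The paper avoids exactly this difficulty by using a \emph{two}-parameter family (the foot $x\in[0,t]$ of the apex perpendicular along a long fixed edge $A_1A_2$, and the dihedral angle $\varphi\in[0,\pi]$ along that edge), arranging by construction that \emph{two} faces always have the exact areas $S_3,S_4$, and then verifying explicit sign conditions for the remaining two area defects on all four sides of the rectangle $[0,t]\times[0,\pi]$ so that a planar index lemma applies; the monotonicity needed on the boundary is itself nontrivial and rests on the concavity of the triangle area as a function of its third side (Lemmas~\ref{BKM1} and~\ref{BKM2}).

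Second, case \eqref{fc1} is essentially not addressed. Your claimed algebraic reformulation is false: squaring \eqref{fc1} gives $\tan^2(S_1/2)>\sin^4(S_4/2)/\cos S_4$, which is not $\sin^2(S_1/2)>\tan^4(S_4/2)$ (note $\cos S_4\ne\cos^4(S_4/2)$ and $\tan(S_1/2)\ne\sin(S_1/2)$), so the ``cleaner inequality'' $\sin(S_1/2)>\tan^2(S_4/2)$ is not equivalent to the hypothesis. More importantly, the actual role of \eqref{fc1} is quantitative and tied to a limiting process you do not have in your setup: the quantity $\frac{1-\cos S_4}{2\sqrt{\cos S_4}}$ is $\tan$ of half the limiting maximal area of a hyperbolic triangle whose two sides are bounded by $h_{t,S_4}$ as $t\to\infty$ (Lemma~\ref{hs4} combined with $\cosh h_{t,S_4}\to 1/\cos S_4$); one must \emph{choose the edge length $t$ large enough}, i.e.\ choose a long thin shape for the faces of areas $S_3,S_4$, in order that the two remaining faces are forced to have area less than $S_1$ at the ends $x=0$ and $x=t$ of the parameter rectangle. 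Fixing an arbitrary base triangle of area $S_4$, as your setup does, destroys this mechanism, and your proposed ``two right-triangle faces'' family is a guess with no verification that its feasibility boundary is \eqref{fc1}. As it stands, neither case is proved.
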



\subsection{Spherical Space}
\label{spherical}
For the spherical case, we give some necessary and some sufficient 
conditions for the existence.

We say that a set
$X \subset {\mathbb{S}}^n$ (for $n \ge 2$) is \emph{convex} if, for
any two non-antipodal $x,y \in X$, the connecting minor great-${\mathbb{S}}^1$
arc $[x,y]$
also belongs to $X$.
This definition classifies an
antipodal pair of points as a convex set.
But 
these are the only convex sets which are disconnected, 
and since the sets we consider 
contain non-trivial arcs,
these exceptional cases play no role for us.
By a \emph{nondegenerate simplex in ${\mathbb{S}}^n$} we mean the set of 
those points of
${\mathbb{S}}^n$ that have non-negative coordinates in some (non-orthogonal)
coordinate
system with origin at $0$, with its usual face lattice.
A \emph{simplex in ${\mathbb{S}}^n$} is a nondegenerate simplex, or
a limiting
position of nondegenerate simplices. Thus, for example, we will not consider
concave spherical triangles or spherical triangles
with sides $3 \pi /2, \pi /4, \pi /4$ or $2 \pi , 0, 0$,
but a spherical triangle with angles $\pi,\pi/5,\pi/5$ and sides
$\pi,\pi/3,2\pi/3$ is a (degenerate) simplex. As a point set, this
simplex
is indistinguishable from a digon. A different division of the digon side,
like $\pi,\pi/4,3\pi/4$, is regarded as a different simplex.
To emphasize the fact that we do not just regard a simplex as a point
set but we consider its face structure, we will often refer to it as a
\emph{combinatorial simplex}.
All simplices in $\mathbb{S}^n$, as well as
in ${\mathbb{R}}^n$ and ${\mathbb{H}}^n$, are convex.
A simplex in an open half-${\mathbb{S}}^n$ with non-empty interior
is always nondegenerate.
(Observe that an open
half-${\mathbb{S}}^n$ also has a collinear model in ${\mathbb{R}}^n$ that also
respects convexity. For the open southern half-${\mathbb{S}}^n$ in
${\mathbb{R}}^{n+1}$ consider the central projection to the tangent space
${\mathbb{R}}^n$
at the South Pole.)


\begin{prop}\label{sphnec}
Let $P \subset {\mathbb{S}}^n$ be a polytope with facet
areas $S_m \ge \dots \ge S_1>0$, such that each facet lies in some closed
half-$\,{\mathbb{S}}^{n-1}$. Then
$$
S_m \le S_1+ \dots +S_{m-1}.
$$
Here strict inequality holds if $P$ is contained in an open
half-$\,\mathbb{S}^n$ and does not degenerate into the 
doubly-counted facet with area $S_m$.

If $P$ is a convex polytope contained in some
closed half-${\mathbb{S}}^n$, then
$$
\sum _{i=1}^m S_i \le V_{n-1}({\mathbb{S}}^{n-1}) .
$$
Here strict inequality holds if $P$ is contained in an open
half-$\,\mathbb{S}^n$.
\end{prop}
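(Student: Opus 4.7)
My plan is to prove the two inequalities separately, using the ambient embedding $\mathbb{S}^n \subset \mathbb{R}^{n+1}$ and the Euclidean cone $\tilde P := \{tx : x \in P,\, t \in [0,1]\}$. Its boundary decomposes as the spherical cap $P$ together with the flat Euclidean $n$-cones $\tilde F_i := \{tx : x \in F_i,\, t \in [0,1]\} \subset H_i$, where $H_i$ is the hyperplane through the origin containing the great $(n-1)$-sphere in which $F_i$ lies, and $\nu_i$ is the outward unit normal to $\tilde P$ on $\tilde F_i$; each $\tilde F_i$ has $n$-volume $S_i/n$. The Euclidean divergence theorem on $\tilde P$, applied with a constant vector field, yields the key identity
\[
\sum_{i=1}^{m} S_i\,\nu_i \;=\; -\,n\int_P x\,d\sigma(x),
\]
which is used throughout.

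For the first inequality $S_m \le S_1+\dots+S_{m-1}$, consider the orthogonal projection $\pi_m : \mathbb{R}^{n+1} \to H_m$. For any point $y$ in the relative interior of $\tilde F_m$, the perpendicular line through $y$ must exit $\tilde P$ at some other boundary point lying on $P \cup \bigcup_{i<m} \tilde F_i$, so
\[
\tilde F_m \;\subseteq\; \pi_m(P) \cup \bigcup_{i<m} \pi_m(\tilde F_i).
\]
Orthogonal projection of each flat $n$-cone $\tilde F_i$ into $H_m$ has $n$-volume at most $S_i/n$, which gives $S_m/n \le V_n(\pi_m(P)) + \sum_{i<m} S_i/n$. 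The facet-hemisphere hypothesis, via the displayed identity paired with $\nu_m$, is then used to absorb the residual term $V_n(\pi_m(P))$ into the sum, yielding $S_m \le \sum_{i<m} S_i$. In the strict case ($P$ in an open half-$\mathbb{S}^n$ and non-degenerate), the projection is sharp only when $P$ collapses onto $\tilde F_m$, forcing the claimed degeneration into the doubly counted facet of area $S_m$.

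For $\sum_i S_i \le V_{n-1}(\mathbb{S}^{n-1})$ under the additional convexity and closed-hemisphere hypotheses, I would use the central (gnomonic) projection $\pi_q : \partial P \to \Sigma_q := \mathbb{S}^n \cap q^{\perp}$ from an interior point $q$ of $P$. Convexity of $P$ makes $\pi_q$ a homeomorphism. In geodesic polar coordinates centered at $q$, the Jacobian of $\pi_q$ at $x \in \partial P$ equals $|\cos\theta(x)|/\sin^{n-1}|qx|$, where $\theta(x)$ is the angle between the outer normal to $\partial P$ at $x$ and the radial direction from $q$. The closed-hemisphere hypothesis forces this Jacobian to be at least $1$ on $\partial P$; summing over facets gives
\[
\sum_i S_i \;\le\; \int_{\partial P} \frac{|\cos\theta|}{\sin^{n-1}|qx|}\,d\sigma \;=\; V_{n-1}(\Sigma_q) \;=\; V_{n-1}(\mathbb{S}^{n-1}).
\]
When $P$ lies in the open half-$\mathbb{S}^n$, the Jacobian is strictly greater than $1$ on a set of positive measure, giving the strict inequality.

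The main obstacle I expect is the absorption step in the first inequality, where eliminating the residual $V_n(\pi_m(P))$ must be done by combining the projection bound with the displayed divergence identity in a way that genuinely uses the facet-hemisphere hypothesis.
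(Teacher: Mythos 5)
Both halves of your argument break down at precisely the steps you flag, and in both cases the gap is substantive rather than technical. For the first inequality, the cone identity $\sum_{i}S_i\nu_i=-n\int_Px\,d\sigma(x)$ and the covering $\tilde F_m\subseteq\pi_m(P)\cup\bigcup_{i<m}\pi_m(\tilde F_i)$ are both correct, but they cannot be combined to remove the residual term $V_n(\pi_m(P))$, which is strictly positive for every non-degenerate $P$. Pairing the identity with $\nu_m$ gives $S_m=-\sum_{i<m}S_i\langle\nu_i,\nu_m\rangle-n\int_P\langle x,\nu_m\rangle\,d\sigma$; for convex $P$ one has $\langle x,\nu_m\rangle\le0$ on $P$, so the spherical term enters with the \emph{unfavourable} sign, and for non-convex $P$ it has no definite sign at all. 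Worse, both of your displayed relations hold for every spherical polytope with no reference to whether the facets lie in closed half-great-spheres, whereas the conclusion $S_m\le\sum_{i<m}S_i$ is false without that hypothesis (see the paper's Remark~\ref{mark7}); so the hypothesis must enter through some mechanism other than these two relations, and none is given. A concrete illustration of how little the projection inequality carries: if $P$ is the (non-convex) complement of a small geodesic triangle, every $S_i$ is tiny while $V_n(\pi_m(P))$ is of the order of $\kappa_n$, so your bound is slack by a constant independent of the $S_i$.

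For the second inequality, the pointwise claim that the Jacobian $|\cos\theta(x)|/\sin^{n-1}|qx|$ of the radial projection is at least $1$ on $\partial P$ is simply false: for a thin convex sliver whose two long sides are nearly radial as seen from $q$, one has $\cos\theta\approx0$ on most of $\partial P$, so the Jacobian is close to $0$ there; the exact identity $\int_{\partial P}\mathrm{Jac}\,d\sigma=V_{n-1}(\mathbb{S}^{n-1})$ is rescued only by the blow-up of the Jacobian near $q$. Since your argument needs $\mathrm{Jac}\ge1$ pointwise in order to pass from $\sum_iS_i=\int_{\partial P}1\,d\sigma$ to $\int_{\partial P}\mathrm{Jac}\,d\sigma$, it does not close, and the closed-hemisphere hypothesis gives no lower bound on $\cos\theta$. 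For comparison, the paper proves both inequalities by a single Crofton-type argument: writing $S_i=c_n\int|F_i\cap C|\,dC$ over great circles $C$, the half-great-sphere hypothesis forces $|C\cap F_i|\le1$ for almost every $C$, an enter/exit count along $C$ gives $|C\cap F_m|\le\sum_{i<m}|C\cap F_i|$, and convexity gives $\sum_i|C\cap F_i|\le2$; integrating yields both inequalities, and the strict cases come from exhibiting positive-measure families of great circles for which the pointwise count is strict. If you want to salvage a projection-style proof of the second part, you would need an averaged, not pointwise, lower bound on the Jacobian, which is essentially the Crofton computation in disguise.
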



\begin{remark}\label{mark7}
Clearly, in the first part of Proposition \ref{sphnec},
the hypothesis that each facet lies in some closed half-$\mathbb{S}^n$
cannot be
dispensed.  Already for $n=2$, we may  even have a degenerate combinatorial 
simplex lying
in some great-$\mathbb{S}^{n-1}$ with one facet strictly containing a
half-$\mathbb{S}^{n-1}$.
 In the second part of Proposition \ref{sphnec}, if
$P$ is contained in a closed half-$\mathbb{S}^n$ but
 not in an open half-$\mathbb{S}^n$, then
 equality can occur:
$P$ can degenerate so that one facet is a closed
half-$\mathbb{S}^{n-1}$, and the union of the other
facets is this closed half-$\mathbb{S}^{n-1}$ or the closure of its
complement in this ${\mathbb{S}}^{n-1}$. 
\end{remark}


\begin{remark}\label{remark3-new}
We do not know how to algorithmically decide whether a
simplex with given facet areas $S_i$ in
  $\mathbb{H}^n$ or $\mathbb{S}^n$ exists, for $n
\ge 3$.
The main difficulty are the transcendental functions
that enter into the calculation of volumes.
In $\mathbb{H}^3$ and $\mathbb{S}^3$,
however,
we have a positive answer to
a slightly modified question. 
The question whether
there is a tetrahedron 
(for ${\mathbb{S}}^3$
in the sense described above) with 
facet areas $S_1,S_2,S_3,S_4$,
is decidable if we are given
$\tan (S_1/2), \dots ,\tan (S_4/2)$ as inputs.

We model this question by setting up a system of equations
and inequalities in
the unknown coordinates $(x_{ij})$ of the four vertices. (For ${\mathbb{S}}^3$
we use its standard embedding into ${\mathbb{R}}^4$, while for
${\mathbb{H}}^3$ we use the hyperboloid model in ${\mathbb{R}}^4$.)
The equations
express the condition that the vertices lie on
 ${\mathbb{S}}^3$ or
 ${\mathbb{H}}^3$, and that
the facet areas of the
corresponding tetrahedron should be $S_1,S_2,S_3,S_4$.
Further inequalities are necessary for ${\mathbb{S}}^3$ to
ensure our definition of simplices.
We are interested in the set of 4-tuples
$(S_1,S_2,S_3,S_4)$ for which \emph{there exist} coordinate
vectors $(x_{ij})$ that fulfill the conditions.
These conditions turn out to be polynomial equations
and inequalities (these polynomials having rational coefficients)
in
$\tan (S_1/2), \dots ,\tan (S_4/2)$ and in the coordinates $x_{ij}$.
By a fundamental result of
Tarski~\cite{T}, this existence question is therefore
(in principle) decidable (we can eliminate the variables $x_{ij}$).
More specifically, the set of quadruples 
$(\tan (S_1/2), \dots ,\tan (S_4/2))$ for all tetrahedra in 
$\mathbb{H}^3$ or $\mathbb{S}^3$ (for $\mathbb{S}^3$ in our sense)
can be described by a finite number of
polynomial equalities and inequalities, these polynomials having rational
coefficients, also using the usual logical connectives ``and'', ``or'',
``not''.
In other words, this set forms a {\emph{semi-algebraic set}}.
\end{remark}


\begin{remark}
{For simplices in $\mathbb{S}^n$ and $\mathbb{H}^n$ (with finite vertices)
the case $S_1=0$ and all other $S_i$'s positive and sufficiently small can be
described. The description is: there exists 
a partition of the other facets into two classes such
that for the two classes the sums of the facet areas are equal.
For this we have to use index considerations, like later 
in the Proofs of Theorems 3 and 4.} 
\end{remark}


\begin{theorem}\label{exsph}
Assume that $\pi/2>S_4\geq S_3\geq S_2\geq S_1 >0$,
 $S_4 <S_1+S_2+S_3$, and
one of the inequalities
\begin{equation}\label{fc1'}
\tan(S_1/2) \ge \frac{1-\cos S_4}{2\sqrt{\cos S_4}},
\end{equation}
or
\begin{equation}\label{fc2'}
S_4\geq S_3+S_2
\end{equation}
holds.
Then there exists a non-degenerate \textup(convex\textup)
tetrahedron $T \subset {\mathbb{S}}^3$ with facet areas $S_1,S_2,S_3,S_4$.
\end{theorem}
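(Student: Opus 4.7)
The plan is to prove Theorem~\ref{exsph} by paralleling the argument for the hyperbolic analogue Theorem~\ref{exhyp}, substituting each $\sinh,\cosh,\tanh$ by $\sin,\cos,\tan$. The case split between conditions \eqref{fc1'} and \eqref{fc2'} mirrors the split \eqref{fc1} vs.\ \eqref{fc2}. The one substantive discrepancy is that \eqref{fc1'} is non-strict while \eqref{fc1} is strict: in $\mathbb{H}^3$ the boundary case of \eqref{fc1} corresponds to a tetrahedron with an ideal vertex at infinity, whereas in $\mathbb{S}^3$ there are no ideal points, so the boundary case of \eqref{fc1'} yields a bona fide compact tetrahedron.

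For the case \eqref{fc2'}, $S_4\ge S_3+S_2$, I would construct a ``nearly flat'' tetrahedron. Fix a great $\mathbb{S}^2\subset\mathbb{S}^3$ and three points $v_1,v_2,v_3\in\mathbb{S}^2$ spanning a spherical triangle of area $S_4$ (possible since $S_4<\pi/2$). Place $v_4$ slightly off $\mathbb{S}^2$, with the foot of its perpendicular, $p$, in the interior of $\triangle v_1v_2v_3$. In the flat limit as the height tends to zero, the three lateral face areas tend to the sub-triangle areas $|\triangle v_jv_kp|$, which sum to $S_4$; lifting $v_4$ enlarges each lateral face continuously. Since the target satisfies $S_1+S_2+S_3>S_4$, and since the hypothesis $S_4\ge S_3+S_2$ guarantees that the flat sub-triangle areas can realize every admissible triple $(S_1,S_2,S_3)$ summing to $S_4$, an intermediate-value / degree argument in the position of $p$ and the height of $v_4$ yields a tetrahedron with the desired facet areas.

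For the case \eqref{fc1'}, a short algebraic manipulation (using $1-\cos S_4=2\sin^2(S_4/2)$ and squaring both sides) rewrites \eqref{fc1'} in the equivalent, much cleaner form $\sin(S_1/2)\ge\tan^2(S_4/2)$. This suggests the threshold corresponds to an extremal configuration within a symmetric one-parameter family. I would work with a family having $S_2=S_3$ (realised by a reflective isometry of $\mathbb{S}^3$ exchanging two vertices), compute the face areas in this family via spherical trigonometry, and verify that the precise threshold for a target $(S_1,S_2,S_3,S_4)$ to lie in the image is exactly \eqref{fc1'}. The general case $S_2<S_3$ would then follow by a small perturbation of the symmetric configuration, using continuity of face areas in vertex positions.

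The main obstacle will be the explicit trigonometric computation in case~\eqref{fc1'}: matching the range of the symmetric family to the bound $\sin(S_1/2)\ge\tan^2(S_4/2)$ requires delicate handling of spherical identities and monotonicity of face areas along the family. A secondary concern in both cases is verifying that the resulting tetrahedra are non-degenerate and convex in the sense defined in Section~\ref{spherical}; since the construction in \eqref{fc2'} keeps all four vertices within a small neighbourhood of an equatorial $\mathbb{S}^2$, and since the family used for \eqref{fc1'} can be confined to a single hemisphere, this ultimately reduces to routine checks.
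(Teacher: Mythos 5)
Your high-level plan (transcribe the proof of Theorem~\ref{exhyp} into spherical trigonometry) is indeed the paper's strategy, but the concrete content you supply for each case does not hold up. First, your ``clean'' reformulation of \eqref{fc1'} is false: with $1-\cos S_4=2\sin^2(S_4/2)$ one gets
$\frac{1-\cos S_4}{2\sqrt{\cos S_4}}=\frac{\sin^2(S_4/2)}{\sqrt{1-2\sin^2(S_4/2)}}$,
which is not $\tan^2(S_4/2)=\frac{1-\cos S_4}{1+\cos S_4}$ (they agree only at $S_4=0$), and no squaring turns $\tan(S_1/2)$ into $\sin(S_1/2)$. The quantity $\frac{1-\cos d}{2\sqrt{\cos d}}$ is $\tan(S_{\max}/2)$ for the maximal area $S_{\max}$ of a spherical triangle with two sides of length at most $d$ (Lemma~\ref{sphs4}, the analogue of Lemma~\ref{hs4}); condition \eqref{fc1'} enters the proof precisely as the bound $s_2(0,\varphi)\le 2\arctan\frac{1-\cos S_4}{2\sqrt{\cos S_4}}\le S_1$ on one side of a parameter rectangle, not as the threshold of a symmetric $S_2=S_3$ family. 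Your proposed route for this case is therefore built on an incorrect identity and a misidentification of what the hypothesis controls.

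Second, and more fundamentally, you never set up the construction that makes the intermediate-value argument work. The paper's proof (Construction~2) fixes an edge $[A_1,A_2]$ of length $t=\pi/2$, places $A_3,A_4$ on perpendiculars to this edge at heights $f_{t,S_4}(x)$, $f_{t,S_3}(x)$ chosen so that the two facets containing $[A_1,A_2]$ have areas exactly $S_4$ and $S_3$ for \emph{every} parameter value, and then applies the index Lemma~\ref{index} on the rectangle $(x,\varphi)\in[0,\pi/2]\times[0,\pi]$ to the remaining two area deficits; hypotheses \eqref{fc1'} and \eqref{fc2'} are exactly what is needed to verify the sign conditions on the vertical sides $x=0$ and $x=t$. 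Your ``nearly flat'' construction for case \eqref{fc2'} has a three-dimensional target $(S_1,S_2,S_3)$ with only a vaguely specified domain and no boundary analysis, and---tellingly---it nowhere uses the hypothesis $S_4\ge S_3+S_2$, so it cannot be the right mechanism. (In the paper this hypothesis yields $s_1(0,\varphi)\ge s_1(0,0)=S_4-S_3\ge S_2\ge S_1$, via the monotonicity statements of Lemmas~\ref{BKM1} and~\ref{BKM2}.) Finally, the non-degeneracy and the verification that the resulting tetrahedron is a simplex in the sense of Section~\ref{spherical} are not routine: the paper must separately treat the boundary case $S_3=S_4=\varphi=\pi/2$, where the construction produces the regular simplex of edge $\pi/2$. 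As it stands, the proposal identifies the right template but is missing the key two-parameter family and the index lemma that carry the actual proof.
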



Now we turn to sufficient conditions for the existence of spherical
polyhedral complexes. The second statement of Proposition \ref{sphsuff} says
the following. For combinatorial simplices contained in some closed
half-${\mathbb{S}}^n$, the two necessary conditions from Proposition
\ref{sphnec} 
 are also sufficient for their existence.


\begin{prop}\label{sphsuff}
  \begin{enumerate}
  \item [\rm(i)]
Let $n \ge 2$ and $m \ge 3$ be integers and let $S_m \ge \dots \ge S_1>0$
and $S_m \le S_1+ \dots +S_{m-1}$ and $S_1+ \dots +S_m \le
V_{n-1}({\mathbb{S}}^{n-1})$. Then there exists a convex $n$-dimensional
polyhedral complex in ${\mathbb S}^n$ that lies 
in a closed half-${\mathbb S}^n$ and has
facet areas
$S_1, \dots , S_m$. All of its facets have two $(n-2)$-faces. If
$S_m < S_1+ \dots +S_{m-1}$ and $S_1+ \dots +S_m <
V_{n-1}({\mathbb{S}}^{n-1})$ then 
all its dihedral angles are less than $\pi $.

\item [\rm(ii)] Let $n \ge 2$ 
  and assume $S_{n+1} \ge \dots \ge S_1>0$, $S_{n+1} \le S_1+ \dots +
  S_n$, and $S_1+ \dots + S_{n+1} \le V_{n-1}({\mathbb{S}}^{n-1})$.
  Then there exists a convex polyhedral complex in ${\mathbb S}^n$
  lying in a closed half-${\mathbb S}^n$ that is a combinatorial
  $n$-simplex with facet areas $S_1, \dots , S_{n+1}$. Its faces of
  any dimension \textup(including the complex itself\textup) have
  their dihedral angles at most $\pi $, and are thus convex, but some
  of their dihedral angles are equal to $\pi $ for $n \ge 3$.
  \end{enumerate}
\end{prop}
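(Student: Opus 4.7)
The plan is to build both (i) and (ii) from a single ingredient: a convex spherical $m$-gon $Q\subset\mathbb{S}^2$ encoding the prescribed facet areas. Setting $\ell_i := 2\pi S_i/V_{n-1}(\mathbb{S}^{n-1})$, the hypotheses translate into $\ell_m \le \sum_{i<m}\ell_i$ and $\sum_i\ell_i \le 2\pi$, so by Theorem~C applied to $\mathbb{S}^2$ such a $Q$ exists in a closed hemisphere of $\mathbb{S}^2$; under the strict inequalities, $Q$ can be taken strictly convex in an open hemisphere.

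For (i), fix antipodal points $v_1,v_2$ on the equator $\mathbb{S}^{n-1}\subset\mathbb{S}^n$ and a \emph{spine} great-$\mathbb{S}^{n-3}$ through $v_1,v_2$ lying in the equator (for $n=2$ the spine is empty and the construction reduces to the polygon $Q$ itself embedded in $\mathbb{S}^2$). At each point of the spine, the normal space in $\mathbb{S}^n$ is $3$-dimensional, and its unit sphere is a $2$-sphere canonically identified with the ambient $\mathbb{S}^2$ of $Q$, the identification chosen so that the pole of the hemisphere containing $Q$ is the transverse direction $e_{n+1}$. Define $P\subset\mathbb{S}^n$ as the orange slice around the spine: $x\in P$ iff the unit tangent direction from the closest spine point to $x$ lies in $Q$ (with the spine itself included). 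Each edge $[u_i,u_{i+1}]$ of $Q$ produces an $(n-1)$-dimensional lune facet of $P$ sitting in the great-$\mathbb{S}^{n-1}$ spanned by the spine and this arc, with dihedral angle $\ell_i$ around the spine and thus $(n-1)$-volume $(\ell_i/2\pi)V_{n-1}(\mathbb{S}^{n-1})=S_i$; its two $(n-2)$-faces are the half-great-$\mathbb{S}^{n-2}$'s extending from the spine in directions $u_i$ and $u_{i+1}$. The hemispheric placement of $Q$ forces $x_{n+1}\ge 0$ throughout $P$, so $P$ lies in the closed upper half-$\mathbb{S}^n$. The dihedral angle of $P$ at each $(n-2)$-face equals the interior angle of $Q$ at the corresponding vertex; strict convexity of $Q$ makes these strictly less than $\pi$.

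For (ii), take $m=n+1$. The case $n=2$ is immediate from (i), since the orange slice with $m=3$ is a spherical triangle, i.e.\ a combinatorial $2$-simplex. For $n\ge 3$, build the $n$-simplex by an iterated spherical pyramid construction: start with the edge $[v_1,v_2]$, pyramid to a triangle by adding $v_3$, and continue pyramiding until an $n$-simplex with $n+1$ vertices is obtained. At each step the new apex is chosen so that the resulting facet has its prescribed area; the hypothesis $S_{n+1}\le\sum_{i\le n}S_i$ controls whether the simplex closes up, and $\sum S_i\le V_{n-1}(\mathbb{S}^{n-1})$ ensures containment in a closed half-$\mathbb{S}^n$. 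The claim that some sub-face dihedral angles equal $\pi$ for $n\ge 3$ is satisfied because matching arbitrary facet areas under these closure hypotheses forces successive apexes onto carefully chosen great subspheres, so that certain lower-dimensional faces collapse onto common great subspheres; across these collapses the sub-face dihedral angles become exactly $\pi$.

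The main obstacle is the existence / area-matching argument in (ii): one must invert the map sending simplex configurations to their $n+1$ facet areas and verify that the stated hypotheses describe exactly the attainable vectors (allowing the degenerate boundary situations that yield $\pi$-dihedral angles at some sub-faces). This needs a continuity or degree argument together with a careful analysis of the boundary cases. For (i), once the spine-and-normal-sphere framework is in place, the remaining steps---applying Theorem~C in the plane, the lune-volume identity, the sign check for upper-half containment, and the identification of dihedral angles with interior angles of $Q$---are routine.
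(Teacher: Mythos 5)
Your part~(i) is essentially the paper's own construction. The paper builds the same object by iterated suspension --- starting from a convex polygon with suitably rescaled side lengths on an equatorial $\mathbb{S}^2$ and repeatedly replacing each face by the union of meridians through it --- which is exactly your join of a spine great-$\mathbb{S}^{n-3}$ with $Q$, described one dimension at a time; the lune-area bookkeeping, the two $(n-2)$-faces per facet, and the identification of the dihedral angles with the interior angles of $Q$ all agree. Two small caveats: Theorem~C is a statement about the \emph{infimum of areas} of polygons assumed to exist and carries the hypothesis $\sum s_i\le\pi$, so it is not the right reference for the existence of $Q$ in the full range $\sum\ell_i\le 2\pi$; the paper instead gets existence by inscribing the polygon in a circle (copying the Euclidean argument) when both inequalities are strict, and treats the boundary cases $S_m=\sum_{i<m}S_i$ and $\sum S_i=V_{n-1}(\mathbb{S}^{n-1})$ separately (doubly counted segment, great circle, digon), which you do not discuss.

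Part~(ii) contains a genuine gap, and you have named it yourself: the iterated pyramid construction requires showing that the map from apex configurations to facet-area vectors hits every vector allowed by the stated inequalities, and no argument for this is given. This is not a routine omission that a ``continuity or degree argument'' will patch. If your scheme worked, it would produce a genuine non-degenerate spherical $n$-simplex with prescribed facet areas whenever $S_{n+1}\le S_1+\dots+S_n$ and $\sum S_i\le V_{n-1}(\mathbb{S}^{n-1})$ --- a strictly stronger claim than the proposition makes, and one the paper conspicuously does not establish: for $n=3$ Theorem~\ref{exsph} proves existence of genuine tetrahedra only under additional hypotheses, and Remark~\ref{remark3-new} records that even deciding existence is unclear to the authors. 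The proposition asserts only a possibly degenerate \emph{combinatorial} simplex, and the clause that some dihedral angles equal $\pi$ for $n\ge3$ is not an incidental by-product to be explained away, as in your closing paragraph, but a structural feature of the intended object. The paper's proof of (ii) sidesteps the inversion problem entirely: it argues by induction on $n$, merging $S_1$ and $S_2$ into $S_1+S_2$, rescaling by $V_{n-2}(\mathbb{S}^{n-2})/V_{n-1}(\mathbb{S}^{n-1})$, applying the induction hypothesis on the equatorial $\mathbb{S}^{n-1}$, suspending as in (i), and then splitting the facet of area $S_1+S_2$ into facets of areas $S_1$ and $S_2$ by inserting a simplicial $(n-2)$-face at a suitable geographic latitude (a one-parameter intermediate-value choice); the inserted faces are flat inside the old facets, which is precisely where the dihedral angles equal to $\pi$ arise. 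You should replace your part~(ii) by an argument of this kind rather than attempt the degree argument, which as formulated targets a claim that is neither needed nor known to be true.
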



\section{Tools for the Euclidean case: Minkowski's theorems}
\label{sec:tools}


We recall some classical concepts and theorems, which are in essence due to
Minkowski, but got their final form by A. D. Aleksandrov \cite{Aleks} and
W. Fenchel and B. Jessen \cite{FJ}.
 We state the results first for arbitrary convex bodies,
and then we restrict them to convex polytopes.
 We will actually need general convex bodies
when considering convergent sequences of convex polytopes
in our first two proofs of Theorem~\ref{main2}
(Sections~\ref{sec:first} and~\ref{sec:second}).
The third proof uses only 
Minkowski's Theorem about convex polytopes
(Theorem~F$'$). The reader may want to skip
directly to Theorem~F$'$.

A \emph{convex body in
${\mathbb{R}}^n$} is a compact convex set $K \subset {\mathbb{R}}^n$
with interior points. For $x \in \partial K$ we say that
$u \in {\mathbb{S}}^{n-1}$ is an
\emph{outer unit normal vector for $K$ at $x$} if $\max \{ \langle k,u
\rangle \mid k \in K \} = \langle x,u \rangle $. In this section we assume
$n \ge 2$ although the theorems of this section will be applied later
for $n \ge 3$ only.

\begin{definition}[{Minkowski, Aleksandrov \cite{Aleks},
Fenchel--Jessen \cite{FJ}, see also \cite[p.~207, (4.2.24) (with $\tau
(K, \omega )$ defined on  p.~77)]{Schn}}]\label{surfareameas}
Let $K \subset {\mathbb{R}}^n$
be a convex body. The \emph{surface area measure
$\mu _K$ of $K$} is a finite Borel measure on ${\mathbb{S}}^{n-1}$
defined as
follows. For a Borel set $B \subset {\mathbb{S}}^{n-1}$,
 $\mu _K (B)$ is the
$(n-1)$-dimensional Hausdorff measure of the set 
$\{ x \in \partial K \mid $ there
is an outer unit normal vector $u$ to $K$ at $x$ such that $u \in B \} $.
\end{definition}

Thus, $\mu _K$ is an element of $C({\mathbb{S}}^{d-1})^*$,
the dual space of the space of real-valued
continuous functions $C({\mathbb{S}}^{d-1})$ on ${\mathbb{S}}^{d-1}$,
i.e., the finite signed Borel measures on
${\mathbb{S}}^{d-1}$. We will use the weak$^*$ topology of
$C({\mathbb{S}}^{d-1})^*$
as the topology for the finite (signed) Borel measures $\mu _K$. That is,
convergence of a sequence (or more generally of a net) of finite signed Borel
measures $\mu _{\alpha } \in C({\mathbb{S}}^{d-1})^*$ to a finite signed Borel
measure $\mu \in C({\mathbb{S}}^{d-1})^*$
means the following. 
For each $f \in C({\mathbb{S}}^{n-1})$, we have 
$\int _{{\mathbb{S}}^{n-1}} f(u)d \mu _{\alpha } (u) \to \int
_{{\mathbb{S}}^{n-1}} f(u)d \mu (u)$.
Moreover, since ${\mathbb{S}}^{n-1}$ is a
compact metric space,
the space $C({\mathbb{S}}^{n-1})$ is separable, and
hence the weak$^*$ topology of
$C({\mathbb{S}}^{n-1})^*$ is metrizable. Therefore, it suffices to give the
convergent sequences in it (i.e., 
 it is
not necessary to consider nets).

For these elementary concepts and facts from functional analysis, we refer
to \cite{DS}.


\begin{classicaltheorem-F}
[{Minkowski, Aleksandrov \cite{Aleks},
Fenchel--Jessen \cite{FJ}, see~also \cite[p.~389, (7.1.1), pp.~389--390,
p.~392, Theorem 7.1.2., p.~397, Theorem 7.2.1]{Schn}}]
Let
$n \ge 2$ be an integer and $K \subset {\mathbb{R}}^n$ a convex body.
The measure $\mu _K$ defined in Definition~\ref{surfareameas}
is invariant under translations of $K$
and has the following properties.
\begin{enumerate}
\item [\thetag i]
$ \int _{{\mathbb{S}}^{n-1}} u d \mu _K (u) = 0$, and
\item [\rm(ii)]
 $\mu _K$ is not concentrated on any great-${\mathbb{S}}^{n-2}$ of
${\mathbb{S}}^{n-1}$.
\end{enumerate}
Conversely, for any finite Borel measure $\mu $ on ${\mathbb{S}}^{n-1}$
satisfying \thetag i and \thetag {ii},
there exists a convex body $K$ such that $\mu _K = \mu $. Moreover, this
convex body $K$ is unique up to translations.
\end{classicaltheorem-F}

Thus,  we can consider the map $K \mapsto \mu _K$ also as a map $\{ $translates
of $K \} \mapsto \mu _K$. 


\begin{classicaltheorem-G}
[{Minkowski, Aleksandrov \cite{Aleks},
Fenchel--Jessen \cite{FJ}, see also \cite[p.~198, Theorem 4.1.1,
p.~205, pp.~392--393, proof of Theorem 7.1.2]{Schn}}]
Let $n \ge 2$ be an integer.
Then the mapping $\{ $translates of $K \} \mapsto \mu _K$
defined in Definition \ref{surfareameas} and just before this theorem
is a homeomorphism between its domain and its range. Its domain is
the quotient topology
of the topology on the convex bodies induced by the Hausdorff metric
with respect to the equivalence relation of being translates.
Its range is the set of finite Borel measures on ${\mathbb{S}}^{n-1}$
satisfying \thetag i and \thetag {ii} of Theorem~F with the subspace topology
of the weak\,$^*$ topology on $C({\mathbb{S}}^{n-1})^*$.
\end{classicaltheorem-G}


We have to remark that the cited sources, 
\cite[pp.~392--393, proof of Theorem 7.1.2]{Schn},
as well as \cite[proof of the theorem on p.~36, on p.~38]{Aleks}, 
contain explicitly only the proof of the continuity
of the bijection $\{ $translates of $K \} \mapsto \mu _K$. However, also the
continuity of the inverse map is proved at both places
although not explicitly stated.
In fact, as kindly
pointed out to the authors by R. Schneider, one has to make the
following addition to his book \cite[proof of Theorem 7.1.2]{Schn}.
Let the sequence of
surface area measures $\mu _{K_i}$
of some convex bodies $K_i \subset {\mathbb{R}}^n$
converge to the surface area measure $\mu _K$ of some convex body
$K\subset {\mathbb{R}}^n$
in the weak$^*$ topology. Then all $K_i$'s
have a bounded diameter. This is stated there
for polytopes only, but the given proof 
is valid for all convex bodies. By a
translation one can achieve that all $K_i$'s and also $K$
are contained in a fixed
ball. 
Recall that the set of non-empty compact convex sets contained
in some closed
ball is compact in their usual topology (i.e., that of the Hausdorff metric). 
Therefore,
we can choose a convergent subsequence $K_{i_j}$ of $K_i$ with limit $K'$,
say. In the Note added in proof at the end of the paper we will show that 
$K'$ is a convex body.
Then the surface area measure $\mu _{K'}$ of $K'$ is
the weak$^*$ limit of the
$\mu _{K_{i_j}}$'s, i.e., it equals the originally considered $\mu _K$.
Hence we have that
$K'$ is a translate of $K$.
Then the entire sequence $K_i$ converges to $K$. Otherwise, we
could choose another subsequence $K_{i_k}$ converging to another convex body
$K''$, which is not a translate of $K$, 
with $\mu _{K''} = \mu _K$. This is a contradiction. Then also the
translation equivalence class of $K_i$ converges to that of $K$, by
continuity of the quotient map.


It was also proved by Minkowski that a convex body 
$K$ is a convex polytope if and only if
$\mu _K$ (that satisfies (i) and (ii) of Theorem F) has a finite
support \cite[p.~390, Theorem 7.1.1, also considering p.~397, 
Theorem 7.2.1]{Schn}. 
If the support is $\{ u_1, \dots ,u_m \} $, we may write
$$
\mu _K = \sum _{i=1}^m \mu _K (\{ u_i \} ) \delta (u_i),
$$
where $\delta (u_i)$ is the \emph{Dirac measure concentrated at $u_i$}. (I.e.,
for a Borel set $B \subset {\mathbb{S}}^{n-1}$ we have
$\delta (u_i) (B)=0 \Longleftrightarrow u_i \not\in B$ and
$\delta (u_i) (B)=1 \Longleftrightarrow u_i \in B$.)
When we write such an equation,
we always assume that $\mu _K (\{ u_i \} ) \ne 0$ for all $i \in \{ 1, \dots
,m \} $. (Thus, the empty sum means the $0$ (finite signed Borel) measure;
although for a convex body $K$, we have $\mu _K \ne 0$.)
The weak$^*$ topology restricted to the finite signed Borel
measures of finite support,
where the support has at most $m$ elements, is the following. 
(We will use only the case when we have a
finite Borel measure and (i) and (ii) of Theorem F hold.) 
For $u_{\alpha }, u \in
{\mathbb{S}}^{d-1}$ with
$u_{\alpha } \to u$ and for $c_{\alpha },
c \in {\mathbb{R}} \setminus \{ 0 \} $
with $c_{\alpha } \to c$, where the $u _ {\alpha }$'s and $c _ {\alpha }$'s
are nets indexed by $\alpha $'s from the same index set,
we have
$c_{\alpha } \delta (u_{\alpha }) \to  c \delta (u)$. Moreover,
for arbitrary $u _{\alpha }
\in {\mathbb{S}}^{n-1}$ and $c_{\alpha } \to 0$, we have
$c_{\alpha } \delta (u_{\alpha }) \to 0$.
Thus, the convergence is defined for finite signed Borel
measures whose supports have at most one point.
Then the convergence is defined for finite sums of such sequences as well
(and in fact, only for these, see the formal definition in the next paragraph).

More exactly, a sequence (or more generally, a net)
$\mu _{\alpha }= \sum _{i=1}^{m_{\alpha }} \mu _{\alpha }
(\{ u_i \} ) \delta (u_i)$ of finite signed
Borel measures on ${\mathbb{S}}^{d-1}$
with $m_{\alpha } \le m$ can converge only to a finite signed Borel
measure of support of at most $m$ points. Moreover, $\mu _{\alpha }$ 
tends to a finite signed Borel
measure $\mu = \sum _{i=1}^{m'} \mu (\{ u_i \} ) \delta (u_i)$ on
${\mathbb{S}}^{n-1}$
with $1 \le m' \le m$ if and only if the following holds. For each $\alpha$,
there exists a
partition of $\{ 1, \dots ,m_{\alpha } \} $
of cardinality $m'$, say $\{ P_{\alpha 1}, \dots ,
P_{\alpha m'} \} $ (where each $P_{\alpha j}$ is non-empty), such that
\begin{enumerate}
\item [(A)]
 for any $j \in \{ 1, \dots ,m' \} $, the
sets $P_{\alpha j}$
converge to $u_j$ (i.e., for any neighbourhood $U_j$ of $u_j$ and for all
sufficiently large $\alpha $, we have
 $P_{\alpha j} \subset U_j$) and
\item [(B)]
 for any $j \in \{ 1, \dots ,m' \} $, the sum
$\sum \{ \mu _{\alpha }(\{ u_i \} )
\mid i \in P_{\alpha j} \} $ converges to $\mu (\{ u_j \} )$.
\end{enumerate}
The same sequence (or more generally a net) $\mu _{\alpha }$
tends to the $0$ (finite signed Borel) measure
if and only if
\begin{enumerate}
\item [(C)]
$
\sum_{i=1}^{m_ \alpha } | \mu _{\alpha }( \{ u_i \} ) | 
\to 0$. (This corresponds to the
case $m'=0$, and also here, an empty sum means $0$.)
\end{enumerate}


For convex polytopes, Theorem F can be rewritten for
$\mu _K = \sum _{i=1}^m \mu _K (\{ u_i \} )\delta (u_i) $ as follows.

\begin{classicaltheorem-F'}
[{Minkowski, see also \cite[p.~389, (7.1.1), pp.~389--390, 
p.~390, Theorem 7.1.1, p.~397, Theorem 7.2.1]{Schn}}]
Let $m > n \ge 2$ be integers, let $S_1, \dots ,S_m >0$,
and let $u_1, \dots ,u_m \in {\mathbb{S}}^{n-1}$.
Then there exists a non-degenerate
convex polytope $P$ having $m$ facets with facet areas
$S_1, \dots ,S_m$ and respective
facet outer unit normals $u_1, \dots ,u_m$ if and only if
\begin{enumerate}
\item [\rm(i)]
$ \sum _{1=1}^m S_iu_i = 0$, and
\item [\rm(ii)]
$u_1, \dots ,u_m$
do not lie in a linear $(n-1)$-subspace of ${\mathbb{R}}^n$.
\end{enumerate}
Moreover,
if $P$ exists, it
is unique up to translations.
\end{classicaltheorem-F'}


For convex polytopes with at most $m$ facets, Theorem G can be rewritten for
$\mu _K = \sum _{i=1}^m \mu _K (\{ u_i \} ) \delta (u_i) $ as follows.

\begin{classicaltheorem-G'}
[{Minkowski, see also \cite[p.~198, Theorem 4.1.1,
p.~205, pp.~392--393, proof of Theorem 7.1.2]{Schn} and the addition after our
Theorem G}]
Let $m > n \ge 2$ be integers.
Then the mapping $\{ \mbox{translates of }\,K \} \mapsto \mu _K$
defined in Definition~\ref{surfareameas} and after Theorem F
is a homeomorphism between its domain and its range.
Its domain is
the subspace corresponding to
the non-degenerate convex polytopes with at most $m$ facets
of the quotient topology
of the topology on the convex bodies 
\textup(induced by the Hausdorff metric\textup)
with respect to the equivalence relation of being translates. Its range is
the set of finite 
Borel measures on ${\mathbb{S}}^{n-1}$ with supports of at most $m$
points
satisfying \thetag {i} and \thetag {ii} of Theorem F\/$'$, with the subspace 
topology
of the weak\,$^*$ topology on $C({\mathbb{S}}^{d-1})^*$. This subspace
topology is described in more explicit form
 before Theorem F\/$'$.
\end{classicaltheorem-G'}


\section{Proofs for the Euclidean case}
\label{Euclidean}


Essentially the following proposition was used in \cite{IyeIye} without
explicitly stating and proving it. It can be considered as folklore (as part of
the proof of the folklore 
Theorem 1), but we state and prove it for completeness.

\begin{prop}\label{main}
Let $m>n \ge 3$ be integers.
Let $\delta>0$ and let
$S_{m}\geq S_{m-1} \geq \dots \geq S_1>0$ be numbers such that  $S_{m}<
S_1 +S_2+\dots +S_{m-1}$.
Then there are pairwise distinct unit vectors
$v_1,v_2, \dots , v_{m} \in \mathbb{S}^{n-1}$
with the following properties\textup:
\begin{enumerate}
\item 
[\rm(i)]
they lie in the open $\delta$-neighbourhood
of the $x_1x_2$-coordinate plane,
\item [\rm(ii)] they do not lie in a linear $(n-1)$-subspace 
of $\mathbb{R}^n$, and
\item [{\rm{(iii)}}] $S_1 v_1+S_2 v_2+ \dots +S_{m}v_{m}=0$.
\end{enumerate}
\end{prop}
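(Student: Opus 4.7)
The plan is to reduce to the planar case and then perturb into $\mathbb{R}^n$. First, using the strict inequality $S_m<S_1+\dots+S_{m-1}$ with $m\ge n+1\ge 4$, I would construct a strictly convex $m$-gon in $\mathbb{R}^2$ with side lengths $S_1,\ldots,S_m$: the configuration space of convex $m$-gons with prescribed side lengths has positive dimension, and strict convexity is an open, nonempty condition on it. The unit edge vectors $w_1,\ldots,w_m\in\mathbb{S}^1$ of such a polygon are pairwise distinct (the edge directions of a strictly convex polygon sweep once monotonically through $\mathbb{S}^1$) and satisfy $\sum_i S_iw_i=0$. Identifying $\mathbb{R}^2$ with the $x_1x_2$-plane of $\mathbb{R}^n$, the $w_i$'s give an initial configuration fulfilling (i) and (iii) but violating (ii).

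Next, I would set up an implicit-function argument. Consider the smooth map
\[
F\colon(\mathbb{S}^{n-1})^m\to\mathbb{R}^n,\qquad F(v_1,\ldots,v_m)=\sum_{i=1}^m S_iv_i.
\]
At $(w_1,\ldots,w_m)$ its differential sends $(\dot v_1,\ldots,\dot v_m)\in\prod_i T_{w_i}\mathbb{S}^{n-1}=\prod_i w_i^\perp$ to $\sum_i S_i\dot v_i$, whose image equals $\sum_i w_i^\perp$. Since the $w_i$'s are not all collinear, two of them, say $w_i$ and $w_j$, are linearly independent, giving $\dim(w_i^\perp+w_j^\perp)=2(n-1)-(n-2)=n$; thus $DF$ is surjective. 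By the implicit function theorem $F^{-1}(0)$ is a smooth submanifold of $(\mathbb{S}^{n-1})^m$ of dimension $(n-1)m-n\ge 5$ near the planar configuration.

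Finally, inside $F^{-1}(0)$ the subset of configurations violating (ii) --- those with all $v_i$ in some common linear hyperplane --- has dimension at most $m(n-2)$, being parametrized by the $(n-1)$-dimensional Grassmannian of hyperplanes through $0$ together with at most $(n-2)m-(n-1)$ further parameters for the $v_i$'s within such a hyperplane. Since $(n-1)m-n-m(n-2)=m-n\ge 1$, this subset has strictly smaller dimension than $F^{-1}(0)$; similarly the closed subset where $v_i=v_j$ for some $i\ne j$ has codimension at least $n-1\ge 2$. Hence arbitrarily close to $(w_1,\ldots,w_m)$ in $F^{-1}(0)$ there exist configurations with pairwise distinct $v_i$'s satisfying (ii), and sufficiently small perturbations lie in the $\delta$-neighborhood of the $x_1x_2$-plane, so (i) also holds. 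The main obstacle is the transversality step --- justifying these upper bounds on the dimensions of the ``bad'' subsets; an alternative, more constructive route would parametrize $v_i$ as the normalization of $w_i+\alpha_i$ with small $\alpha_i\in\mathrm{span}(e_3,\ldots,e_n)$ and apply the implicit function theorem to solve $\sum_i S_iv_i=0$ for the $\alpha_i$'s directly.
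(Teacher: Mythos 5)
Your proposal is correct in substance and shares the paper's starting point --- a strictly convex planar $m$-gon with side lengths $S_1,\dots,S_m$, whose unit edge vectors give (i) and (iii) --- but it handles the crucial non-degeneracy condition (ii) by a different mechanism. The paper perturbs the \emph{vertices} of the polygon (which automatically preserves the closing-up condition) and proves (ii) by an elementary contradiction argument: if $k<n$ is the largest affine dimension attainable by arbitrarily small length-preserving perturbations, then some vertex lies in the affine hull $H$ of the others and can be moved off $H$ along the $(n-2)$-sphere of positions compatible with its two incident edge lengths, contradicting maximality of $k$. You instead work directly on the configuration manifold $F^{-1}(0)\subset(\mathbb{S}^{n-1})^m$ and argue that the locus of configurations contained in a hyperplane has strictly smaller dimension, so generic small perturbations inside $F^{-1}(0)$ satisfy (ii). Your submersion computation is right (two independent $w_i$'s give $w_i^\perp+w_j^\perp=\mathbb{R}^n$), and the dimension count $m(n-2)<(n-1)m-n$ is correct; what remains, as you note, is to justify that ``smaller dimension implies empty interior'' for the bad locus --- this is standard once you observe that the bad set is closed and semialgebraic (cut out by the vanishing of the $n\times n$ minors of $(v_1,\dots,v_m)$ together with the linear relation), but it does need to be said. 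The paper's route buys a completely elementary, self-contained argument at the cost of a slightly fiddly induction on $k$; yours buys brevity at the cost of invoking (semialgebraic) dimension theory. One caution about your closing ``alternative constructive route'': at $\alpha=0$ the derivative of $\sum_iS_iv_i$ with respect to perturbations $\alpha_i\in\mathrm{span}(e_3,\dots,e_n)$ has image contained in $\mathrm{span}(e_3,\dots,e_n)$, so the implicit function theorem cannot solve all $n$ equations using only the out-of-plane variables; you would also have to allow in-plane adjustments (which is, in effect, what the paper's vertex-moving argument does for free).
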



\begin{proof}
Let $P$ be the $x_1x_2$-coordinate plane in $\mathbb{R}^n$.
Since $S_m< S_1 +S_2+\dots +S_{m-1}$,
there exists a convex polygon 
$A_1A_2 \ldots A_m$ (with angles strictly smaller than $\pi $) in
$P$ such that $| A_iA_{i+1} | =S_i$ for $i=1, \dots ,m$ (indices considered
modulo $m$), see \cite[p.~44]{JB}, \cite[pp.~53--54]{Kr}.
Then the edge directions $u_i:=\overrightarrow{A_iA_{i+1}} / | A_iA_{i+1} | 
\in {\mathbb S}^{n-1} \cap P $ are distinct unit vectors.
We will perturb
 $A_1A_2 \ldots A_m$ to a spatial polygon
 $B_1B_2 \dots B_m$, keeping the side lengths equal:
$| B_iB_{i+1} | =| A_iA_{i+1} | =S_i$.
The unit vectors $v_i :=
\overrightarrow{B_iB_{i+1}} / S_i$
will then fulfill
(iii) by construction.

Clearly, for $\| v_i-u_i \| < \delta $, the vector
$v_i$ lies in the open $\delta
$-neighbourhood of $P $, hence (i) is satisfied.
Further, for $\delta $ sufficiently small,
the vectors
$v_1, \dots ,v_m$ are also pairwise distinct.

Let $k$ denote the largest integer such that there are arbitrarily small
perturbations
 $B_i$
of our original
points $A_i$
that satisfy the
following:
all edges have the right length
$| B_iB_{i+1} | =S_i$,
and the
dimension of the affine hull of $B_1, \dots ,B_m$ has dimension $k$. 

Assume for contradition that $k<n$.
Then, by $m \ge n+1 \ge k+2$, 
there is an affine dependence among the $B_i$'s. Let,
for example, 
$B_m$ lie in the affine hull $H$ of $B_1, \dots ,B_{m-1}$. Then, fixing
$|B_{m-1}B_m|$ and $|B_mB_1|$, the point 
$B_m$ can move on an $(n-2)$-sphere around the axis $B_{m-1}B_1$ in a
hyperplane perpendicular to $H$. 
 Hence there is an
arbitrarily small perturbation of $B_m$ lying outside $H$, 
while $\aff \{ B_1, \dots ,B_{m-1} \} $
already spans $H$.
Thus 
we have obtained a contradiction to the choice of $k$.

This proves $k=n$ and thus \thetag{ii}. 
\end{proof}
\medskip


\subsection {Proof of Theorem~\ref{main3}}
The implication $\thetag {ii} \Longrightarrow \thetag {i}$ is evident.

The implication $\thetag {i} \Longrightarrow \thetag {iii}$ is well-known, but 
we give the proof for completeness.
Using the notations from Theorem F$'$, we have
$S_m = \| S_m u_m \| = \| \sum
_{i=1} ^{m-1} S_i u_i\| \le \sum _{i=1} ^{m-1} S_i$.
The only
case of equality is the degenerate case given in 
condition \thetag {iii$'$} of the
theorem.

Finally, $\thetag {iii} \Longrightarrow \thetag {ii}$ follows from 
Proposition \ref{main} and
Minkowski's Theorem F$'$.

The degenerate case, with \thetag {iii$'$}, follows from the above 
considerations.
\qed 


\subsection {Proofs for Theorem~\ref{main2}}
We need the following relation between the surface area, diameter
and volume of a convex body. Here $\kappa _{n-1}$ is the volume
of the unit ball in ${\mathbb{R}}^{n-1}$.
\begin{prop}[Gritzmann, Wills and Wrase \cite{GWW}]\label{isop} Let
$K\subset \mathbb{R}^n$ be a convex body.
Then the inequality
$S(K)^{n-1} > \kappa _{n-1} \cdot  $\,{\rm{diam}}\,$(K) \cdot 
\left( nV(K) \right) ^{n-2}$
holds and this inequality is sharp.
\qed
\end{prop}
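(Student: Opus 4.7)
The plan is to slice $K$ perpendicular to a diameter, bound $S(K)$ from below by an integral of slice perimeters via the coarea formula and the isoperimetric inequality in $\mathbb{R}^{n-1}$, and close the gap with $V(K)$ by a classical one-dimensional inequality due to Berwald for concave functions.

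First I would pick a diameter $[p,q]$ of $K$ with $|pq|=d=\diam(K)$ and take coordinates so that $p=0$ and $q=(d,0,\dots,0)$. Writing $K_t=K\cap\{x_1=t\}$, $A(t)=V_{n-1}(K_t)$, and $g(t)=A(t)^{1/(n-1)}$, Brunn's theorem makes $g$ concave on $[0,d]$. The key boundary fact $A(0)=A(d)=0$ is immediate from the diameter condition, since any $x\in K_0$ satisfies $|xq|^2=|xp|^2+d^2\le d^2$, forcing $x=p$. Hence $g(0)=g(d)=0$ and $V(K)=\int_0^d g(t)^{n-1}\,dt$.

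For the surface area, the coarea formula applied to the $1$-Lipschitz function $x_1$ on $\partial K$ gives $S(K)\ge\int_0^d V_{n-2}(\partial K_t)\,dt$ (since the tangential gradient of $x_1$ on $\partial K$ has norm at most one). The isoperimetric inequality inside each slicing hyperplane $\{x_1=t\}\cong\mathbb{R}^{n-1}$ yields $V_{n-2}(\partial K_t)\ge(n-1)\kappa_{n-1}^{1/(n-1)}g(t)^{n-2}$, so that
\[
S(K)\ge (n-1)\kappa_{n-1}^{1/(n-1)}\int_0^d g(t)^{n-2}\,dt.
\]
The last step is Berwald's inequality: for any non-negative concave $g$ on $[0,d]$ and any $0<p<q$, $\bigl(\tfrac{q+1}{d}\int g^q\bigr)^{1/q}\le\bigl(\tfrac{p+1}{d}\int g^p\bigr)^{1/p}$. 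Taking $p=n-2$, $q=n-1$ and raising both sides to the power $(n-1)(n-2)$ rearranges to $(\int g^{n-2})^{n-1}\ge\tfrac{d\,n^{n-2}}{(n-1)^{n-1}}(\int g^{n-1})^{n-2}$. Combining this with the surface-area bound gives exactly $S(K)^{n-1}\ge\kappa_{n-1}\,d\,(nV(K))^{n-2}$. Strictness comes from the coarea step: $A(0)=A(d)=0$ prevents the normals of $K$ in a neighbourhood of $p$ from being everywhere perpendicular to the $x_1$-direction (otherwise the cap would be an $e_1$-cylinder and force $A(0)>0$), so $|\nabla_T x_1|<1$ on a set of positive $(n-1)$-measure.

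For sharpness I would consider a thin right circular cone of axis length $\ell$ with base an $(n-1)$-ball of radius $r$. A direct computation gives $V=\kappa_{n-1}r^{n-1}\ell/n$, $d=\sqrt{\ell^2+r^2}$, and $S=\kappa_{n-1}r^{n-2}(\sqrt{\ell^2+r^2}+r)$; the ratio $S^{n-1}/(\kappa_{n-1}\,d\,(nV)^{n-2})$ simplifies to $(\sqrt{\ell^2+r^2}+r)^{n-1}/(\sqrt{\ell^2+r^2}\,\ell^{n-2})$, which tends to $1$ as $r/\ell\to 0$. The main obstacle of the whole argument is the last step: the elementary bounds $g\le\max g$ or the power-mean inequality alone lose the decisive factor $n^{n-2}/(n-1)^{n-1}$, and one genuinely needs a Berwald-type concavity inequality to recover the sharp constant matching the cone asymptotics.
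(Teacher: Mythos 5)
The paper does not prove this proposition at all: it is quoted from Gritzmann--Wills--Wrase \cite{GWW} with an immediate \qed, so there is no internal argument to compare against. Your proof is, as far as I can check, correct, and it follows what is essentially the classical route (and, in spirit, that of \cite{GWW}): slice perpendicular to a diameter, use that the diameter condition forces $A(0)=A(d)=0$, combine Brunn's concavity of $g=A^{1/(n-1)}$ with the $(n-1)$-dimensional isoperimetric inequality on each slice, and close with Berwald's reverse H\"older inequality for concave functions; the exponent bookkeeping $\bigl(\int g^{n-2}\bigr)^{n-1}\ge d\,n^{n-2}(n-1)^{-(n-1)}\bigl(\int g^{n-1}\bigr)^{n-2}$ and the cone computation for sharpness both check out. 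Two small remarks. First, your Berwald step needs $p=n-2>0$, so the argument as written covers only $n\ge 3$; for $n=2$ the claim degenerates to $S(K)>2\diam(K)$, which should be disposed of separately (this is the only case the paper does not actually use, so it is harmless, but the proposition is stated for all $n$). Second, strictness is available more cheaply than through the coarea step: equality in Berwald's inequality for $0<p<q$ forces $g$ to be affine and to vanish at exactly one endpoint of $[0,d]$, which is incompatible with $g(0)=g(d)=0$ and $V(K)>0$, so the final inequality is automatically strict; alternatively, your coarea argument can be tightened by noting $\int_{\partial K}|\langle e_1,\nu\rangle|\,dS=2V_{n-1}(\Pi_{e_1^\perp}K)>0$, which directly gives $|\nabla_T x_1|<1$ on a set of positive measure. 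Neither issue affects the validity of the proof.
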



We will construct the polytope for Theorem~\ref{main2}
by Minkowski's Theorem F$'$.
We need to
choose only an appropriate 
surface area measure.
For a convex polytope, this finite Borel measure is concentrated
in finitely many points. Assume that for given facet areas 
we are far from the degenerate case where
this measure is concentrated in a great-$\mathbb S^{n-2}$. Then by
compactness, the volume of the convex polytope is bounded from
below. Therefore, to get an arbitrarily small volume, we must approach the
degenerate case. This will be done in
the following proof. Recall also the paragraph after Theorem E citing
\cite{IyeIye} (p.~\pageref{thm-E}), where also the
degenerate case was approximated --- however, for simplices only.


\subsection {
First proof of Theorem~\ref{main2}}
\label{sec:first}

By Proposition \ref{main},
for any $\delta>0$, there are
pairwise distinct vectors
$v_1,v_2, \dots , v_{m} \in \mathbb{S}^{n-1}$
in the open $\delta$-neighbourhood
of the $x_1x_2$-coordinate plane, satisfying the following. They
do not lie in a linear $(n-1)$-subspace of $\mathbb{R}^n$,
and $S_1 v_1+S_2 v_2+ \dots +S_{m}v_{m}=0$.
By Minkowski's Theorem F$'$
there exists a non-degenerate convex polytope $P=P(\delta)$ in $\mathbb{R}^n$
having $m$ facets
with areas
$S_1, \dots, S_{m}$ and unit outer normals $v_1, \dots , v_{m}$.

Let us consider the sequence of polytopes $P_k=P(1/k)$ for $k=1,2,\ldots\,$\,.
We will show that
$V(P_k)\rightarrow 0$ as $k \to \infty$.
Assume the contrary. Then (possibly passing to a subsequence),
we may assume without loss of generality that $V(P_k) \ge \alpha >0$.

By Proposition \ref{isop}, we get the inequality
\begin{equation}
  \nonumber 
S(P_k)^{n-1} > \kappa _{n-1} \cdot \diam(P_k) \cdot (nV(P_k))^{n-2} \ge
\kappa _{n-1} \cdot \diam (P_k) \cdot (n \alpha )^{n-2} ,
\end{equation}
where $S(P_k)=\sum_{i=1}^m S_i$ is a constant.
From this inequality, we conclude that 
$\diam(P_k)$ 
is bounded by some constant $D$ for all~$k$.

By applying translations,
we may assume without loss of generality that
all $P_k$'s have a common point. Therefore, all polytopes $P_k$ lie in a ball
of radius~$D$.
Using compactness (possibly passing to a subsequence),
we may assume even more.
 The sequence $P_k$ tends (in the Hausdorff metric) to a non-empty
compact convex set $P_0$
as $k \to \infty$ \cite[p.~50, Theorem 1.8.6]{Schn}.
Therefore,
$V(P_0)\ge \alpha >0$, and hence $P_0$ is a convex body.
Moreover, by Minkowski's Theorem
G$'$ (actually only by the continuity of the bijection in that
theorem), $P_0$ is a convex polytope having
$m$ facets 
 with all facet outer unit normals in
the $x_1x_2$-coordinate plane. However, this is a contradiction to
condition (ii) of 
Theorem F$'$.
\qed 


\begin{remark}
  Instead of Proposition \ref{isop}, where the multiplicative
  constant is sharp, we could have used a consequence of the
  Aleksandrov-Fenchel inequality \cite[p.~327, Theorem 6.3.1]{Schn} to
  show that the diameter is bounded. Namely: the quermassintegrals $W_i(K)$
  (\cite[p.~209]{Schn}) for $0 \le i \le n$ form a logarithmically
  concave sequence. Here, $W_0(K)=V(K)$ and for fixed $n$, $W_1(K)$ is
  proportional to $S(K)$ and $W_{n-1}(K)$ is proportional to the mean
  width of $K$ (\cite[p.~210, p.~291, (5.3.12)]{Schn}).  Then apply
  this logarithmic convexity for volume, constant times surface area
  and constant times mean width. Finally, use the fact that the
  quotient of the diameter and the mean width is between two
  positive numbers (depending only on $n$). This yields the inequality
  of Proposition \ref{isop} with a weaker constant.
\end{remark}


\begin{example}\label{needle}
We give an example of a family of tetrahedra ($n=3$ and $m=4$) with
constant facet areas and arbitrarily small volume.
The 
tetrahedra
look like thin vertical needles and have vertices
$( \pm \varepsilon , 0, -(1/ \varepsilon )  \sqrt{ 1- \varepsilon^4/4}
) $ and
$( 0, \pm \varepsilon ,  (1/ \varepsilon)  $
\newline
$\sqrt{ 1- \varepsilon^4/4}
) $.
All facets have area 2, and the volume
is $(4 \varepsilon / 3) \sqrt{ 1- \varepsilon^4/4}$, which tends to
zero
as $\eps\to 0$.
\end{example}


\subsection{Second proof of Theorem~\ref{main2}}
\label{sec:second}

{\bf{1.}} First we will construct a partition ${\mathcal P}= \{
P_1, \dots , P_{n+1} \} $ of the index set $\{ 1, \dots ,m \} $
into $n+1$ classes. We will
achieve that the $n+1$ numbers $
\sum_{i \in P_j} S_i$ (for $1
\le j \le n+1$) have the property that
\begin{enumerate}
\narrower
\item [($*$)]
the largest of these numbers 
 is smaller than the sum of all others.
\end{enumerate}
We start with the partition into $m$ singleton classes. Suppose that
we
already have constructed a partition ${\mathcal Q}= \{ Q_1, \dots ,Q_k \} $
such that
\begin{enumerate}
\narrower
\item [($**$)]
the largest of the numbers $T_j:=\sum_{ i \in Q_j}  S_i$,
where $1\le j\le k$,
is smaller than the sum of all the other numbers $T_j$.
\end{enumerate}
If $k=n+1$, then we stop. If $k>n+1$, then
let us
assume
$T_1\le T_2 \le \dots \le T_k$.
Now we take the two classes $Q_1$ and $Q_2$ with
the two smallest sums
and form their union while the other classes $Q_j$ are kept. In
the new partition, the partition class that has maximal sum $T _j$ can
be either the same partition class as in the preceding step or the newly
constructed union. In the first case, ($**$) is
evident. In the second case,
we have $T_1+T_2 \le T_{k-1}+T_k < T_3 +\dots+ T_{k-1}+T_k$ before taking 
the union
since $k \ge n+2 \ge 5$.
Thus $T_{\mathrm{new}} := T_1+T_2 < T_3 +\dots+ T_{k-1}+T_k$.
Therefore,
($**$) holds in this case as well.

This proves that $(*)$ holds for the final partition.

{\bf{2.}} We consider the partition ${\mathcal P}= \{
P_1, \dots ,
P_{n+1}\}$ constructed above. For the sums $R _j:=\sum_{i \in P_j}S_i$, 
we use Theorem E to
construct a non-degenerate
simplex $S$ that has these facet areas and has an arbitrarily
small volume.
Then, for the respective outer unit normals $u_j$ of the facets of this
simplex, we have $\sum _{j=1}^{n+1} R _ju_j=0$.

{\bf{3.}}
We will now split each facet of the simplex into almost parallel
facets
to get the desired polytope with $m$ facets.
Let $\varepsilon >0$ be small. For each $1 \le j \le n+1$,
choose a linear $2$-subspace $X_j$ containing $u_j$.
Choose a
vector $u_{ji}\in X_j$
for
each $i \in P_j$ 
such that
the vectors
$-(1- \varepsilon ) (\sum\limits _{i \in P_j} S_i) u_j$ and $S_i u_{ji}$ (for $i
\in P_j$) are the side
vectors of a convex polygon in $X_j$.
Since the length of the first vector is almost equal to the sum of the others,
 all $u_{ji}$
are close to $u_j$
for $\varepsilon $ sufficiently small. Then all vectors $S_i u_{ji}$ 
for $1 \le j
\le n+1$ and $i \in P_j$ linearly
span ${\mathbb{R}}^n$, and
their sum is 
$$
\sum\limits_{1 \le j \le n+1} \Bigl( \sum\limits_{i \in P_j}
S_iu_{ji}\Bigr)=
\sum\limits_{1 \le j \le n+1} (1-\varepsilon ) 
\Bigl(\sum\limits_{i \in P_j} S_i\Bigr)
u_j=
(1-\varepsilon ) \sum\limits_{1 \le j \le n+1} R _j u_j=0.
$$

{\bf{4.}}
By Minkowski's Theorem F$'$,
there exists a non-degenerate convex polytope with facet
outer unit normals $u_{ji}$ and facet areas $S_i$ (for all $1 \le j
\le n+1$ and all $i \in P_j$).

Observe that we have changed in the course of the proof the surface area measure
only a little bit (in the weak$^*$-topology of
$C(\mathbb{S}^{n-1})^*$). Therefore, after a suitable translation, the
obtained convex polytope is arbitrarily close to the original
simplex $S$ by Theorem~G$'$ (actually only by the
continuity of the inverse of the bijection in that theorem).
Since the simplex had an
arbitrarily small volume,
our convex polytope also has an arbitrarily small volume.
\qed


\subsection{Third proof of Theorem~\ref{main2}}

The first two 
proofs of Theorem~\ref{main2} did not give geometric
information about the constructed polytopes.
(The first proof used an argument by contradiction and the
second proof used the examples of the simplices.) Now we give a third proof
that is more quantitative and will give also geometric information.
This proof constructs
  ``needle-like'' polytopes, as in Example~\ref{needle}. See also the paragraph 
following the statement of Theorem~\ref{main2} (p.~\pageref{main2}).

First we give the proof 
for $n=3$ dimensions.
We begin with an elementary lemma.  It shows that a convex polytope in
${\mathbb R}^3$ that has steep (almost vertical) facets must have
steep edges, as long as the angles between the normal vectors of
different facets are bounded away from $0$ and $\pi $.


\begin{lemma}\label{slope}
Consider two planes in ${\mathbb R}^3$ with unit normals
$u^+$ and $u^-$. Assume that $u^+$ and $u^-$
enclose an angle at most $\varepsilon \in (0, \pi /2)$ with the $xy$-plane,
and the angle between them lies in $[\beta , \pi - \beta ]$, where 
$0 < \beta \le \pi /2$.
Then their intersection line encloses an angle at most
\begin{equation}
\nonumber 
\delta := \arcsin \frac{\sin \varepsilon }{\sin (\beta /2)}
\end{equation}
with the $z$-axis, provided that
$\eps \le \beta/2$.
This inequality is sharp.
\end{lemma}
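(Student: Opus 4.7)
The plan is to identify the direction of the intersection line as $v:=u^+\times u^-$ and to express $\sin^2\gamma$ (where $\gamma$ denotes the angle between this line and the $z$-axis) in terms of the scalars $z^\pm:=e_z\cdot u^\pm$, which satisfy $|z^\pm|\le\sin\varepsilon$ by hypothesis, and the angle $\alpha\in[\beta,\pi-\beta]$ between $u^+$ and $u^-$.

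The key observation is that the three vectors $u^++u^-$, $u^+-u^-$, and $v$ are pairwise orthogonal: the first two because $|u^+|=|u^-|=1$, and both are perpendicular to $v$ since they lie in the plane spanned by $u^\pm$. Their lengths are $2\cos(\alpha/2)$, $2\sin(\alpha/2)$, and $\sin\alpha$, respectively. Writing $e_z$ in the associated orthonormal basis and using $\cos\gamma=|e_z\cdot v|/|v|$, the Pythagorean identity gives
\begin{equation*}
\sin^2\gamma \;=\; \frac{(z^++z^-)^2}{4\cos^2(\alpha/2)} \;+\; \frac{(z^+-z^-)^2}{4\sin^2(\alpha/2)}.
\end{equation*}

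Next, the right-hand side is convex in $(z^+,z^-)$, so its maximum on the square $|z^\pm|\le\sin\varepsilon$ is attained at one of the four corners. The corners $(\pm\sin\varepsilon,\pm\sin\varepsilon)$ yield $\sin^2\varepsilon/\cos^2(\alpha/2)$, while the corners $(\pm\sin\varepsilon,\mp\sin\varepsilon)$ yield $\sin^2\varepsilon/\sin^2(\alpha/2)$. Since $\beta\le\alpha\le\pi-\beta$ and $\beta\le\pi/2$, we have $\alpha/2\in[\beta/2,\pi/2-\beta/2]$, so both $\sin(\alpha/2)$ and $\cos(\alpha/2)$ are at least $\sin(\beta/2)$. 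Hence $\sin^2\gamma\le\sin^2\varepsilon/\sin^2(\beta/2)$, and the assumption $\varepsilon\le\beta/2$ yields $\sin\varepsilon\le\sin(\beta/2)$, so $\delta=\arcsin(\sin\varepsilon/\sin(\beta/2))$ is well defined and $\gamma\le\delta$ follows.

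For sharpness I would take $\alpha=\beta$ together with $(z^+,z^-)=(\sin\varepsilon,-\sin\varepsilon)$: then the first term above vanishes and the second achieves $\sin^2\varepsilon/\sin^2(\beta/2)$. An explicit realisation is $u^\pm=(\cos\varepsilon\cos\varphi,\pm\cos\varepsilon\sin\varphi,\pm\sin\varepsilon)$, with $\varphi$ chosen so that $u^+\cdot u^-=\cos\beta$; the hypothesis $\varepsilon\le\beta/2$ is exactly what ensures such a $\varphi$ exists, since it is equivalent to $\cos\beta\le\cos 2\varepsilon=\cos^2\varepsilon-\sin^2\varepsilon$. I do not foresee any serious obstacle: the one non-routine step is recognising the orthogonal decomposition via $u^+\pm u^-$ and $u^+\times u^-$, after which both the identity for $\sin^2\gamma$ and the maximisation over the box are mechanical.
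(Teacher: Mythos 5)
Your proof is correct and is in essence the paper's own argument in algebraic dress: the orthogonal decomposition of $e_z$ along $u^++u^-$, $u^+-u^-$ and $u^+\times u^-$ is exactly the paper's change of coordinates in which the intersection line is vertical and the feasible projections of the North Pole form a rhomb, and your maximisation over the corners of the box $|z^\pm|\le\sin\varepsilon$ reproduces the paper's observation that the farthest point of that rhomb is a vertex at distance $\max\{(\sin\varepsilon)/\sin(\alpha/2),(\sin\varepsilon)/\cos(\alpha/2)\}$. Your explicit configuration witnessing sharpness (with $\alpha=\beta$ and opposite signs of $z^\pm$) is a small bonus, as the paper only asserts sharpness without exhibiting it.
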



\begin{proof}
We choose a new coordinate system in the following way.
The intersection line becomes
the vertical axis, and the two normal vectors  $u^+,u^- \in
{\mathbb{S}}^2$
lie in the horizontal plane, enclosing an angle $\beta ' \in [
\beta , \pi - \beta ]$ with each other.
 In the new coordinate system, the original
North Pole becomes $n=(n_1,n_2,n_3) \in {\mathbb{S}}^2$.

By hypothesis, 
\begin{equation}\label{a}
\langle n,u^-
\rangle , \langle n,u^+ \rangle \in [- \sin \varepsilon ,  \sin \varepsilon
].
\end{equation}
We want to conclude that 
\begin{equation}
\label{d}
| \langle (0,0,1), n \rangle | = |n_3| \ge \cos \delta ,
\end{equation}
i.e., that
\begin{equation}\label{b}
\sqrt{n_1^2+n_2^2} \le \sin \delta .
\end{equation}
The points $(n_1,n_2) \in {\mathbb{R}}^2$ (projections of $n$ to the
$xy$-plane) for $n$ satisfying (\ref{a}) form
a rhomb of height $2 \sin \varepsilon $ and angles $\beta ', \pi - \beta '
$. A farthest point of this rhomb from $(0,0)$ is one of the
vertices
and its distance from $(0,0)$ is
$\max \{ (\sin \varepsilon )/\sin (\beta '/2),
\allowbreak
(\sin \varepsilon )/ \cos (\beta '/2) \}
\le (\sin \varepsilon )/ \sin ( \beta /2)
= \sin \delta $. That is, (\ref{b}), or equivalently, (\ref{d}) holds and
both are sharp inequalities. Hence,
the inequality of the lemma holds and it is sharp.
\end{proof}


\begin{lemma}\label{volume}
Consider a convex polyhedron $P \subset {\mathbb R}^3$
with facet areas $S_1, \dots, S_m$.
Assume that its facet outer normals
enclose an angle at most $\varepsilon $ with the $xy$-plane and the
angle between any two of them
 lies in $[\beta , \pi - \beta ]$, where $0 < \beta
\le \pi /2$. Then its volume is bounded by
$$
V(P) \le 2^{-1/4} \pi ^{-1} \cdot \left( \sum _{i=1}^m S_i^{3/4} \right) ^2
\cdot \left( \frac{\sin \varepsilon }{\sin (\beta /2) } \right) ^{1/2},
$$
if $(\sin \varepsilon )/\sin (\beta /2) \le 1/\sqrt{2}$.
\end{lemma}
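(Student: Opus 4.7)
The plan is to combine the needle-like geometry of $P$ with the isoperimetric inequality of Proposition~\ref{isop}: because all facets of $P$ are nearly vertical, Lemma~\ref{slope} will force every edge of $P$ to be nearly vertical, and this in turn will force each facet to be elongated in the $z$-direction, giving a lower bound on $\diam(P)$. Proposition~\ref{isop} then translates this into an upper bound on $V(P)$, which after a short algebraic manipulation matches the stated form.

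First I apply Lemma~\ref{slope} to any two facets of $P$: the hypothesis $\varepsilon\le\beta/2$ required by that lemma follows from $\sin\delta\le 1/\sqrt 2$ (via $\sin\varepsilon\le\sin(\beta/2)/\sqrt 2\le\sin(\beta/2)$), so every edge of $P$ encloses an angle at most $\delta:=\arcsin(\sin\varepsilon/\sin(\beta/2))$ with the $z$-axis. Now fix a facet $F_i$ in a plane $\Pi_i$ whose unit outer normal $u_i$ makes angle $\alpha_i\le\varepsilon$ with the horizontal, and let $v_i\in\Pi_i$ be the unit vector along the orthogonal projection of the $z$-axis onto $\Pi_i$, so that $z=\cos\alpha_i\cdot v_i+\sin\alpha_i\cdot u_i$. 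For any unit vector $\vec e\in\Pi_i$, taking the inner product with $z$ gives $\vec e\cdot v_i=(\vec e\cdot z)/\cos\alpha_i$; hence $|\vec e\cdot v_i|\ge\cos\delta$ whenever $|\vec e\cdot z|\ge\cos\delta$ (using $\cos\alpha_i\le 1$). Thus every edge of $F_i$ lies within angle $\delta$ of $\pm v_i$ measured inside $\Pi_i$.

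Let $H_i$ and $W_i$ be the extents of $F_i$ in the directions $v_i$ and $v_i^\perp$ inside $\Pi_i$. The preceding paragraph implies that each edge of $F_i$ has $v_i^\perp$-component of absolute value at most $\tan\delta$ times its $v_i$-component's absolute value. Since $F_i$ is a convex polygon, the total variation of each coordinate function around $\partial F_i$ equals twice the corresponding extent (the boundary splits into two monotone arcs joining the extremal vertices); summing the edgewise inequality therefore yields $2W_i\le(\tan\delta)\cdot 2H_i$, whence $S_i\le H_iW_i\le H_i^2\tan\delta$ and $H_i\ge\sqrt{S_i/\tan\delta}$. Any segment of length $H_i$ in $F_i$ parallel to $v_i$ has $z$-extent $H_i\cos\alpha_i\ge H_i\cos\varepsilon$, so $\diam(P)\ge\cos\varepsilon\sqrt{S_i/\tan\delta}$; taking $i$ with $S_i=S_{\max}:=\max_j S_j$ gives $\diam(P)\ge\cos\varepsilon\sqrt{S_{\max}/\tan\delta}$.

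Proposition~\ref{isop} (for $n=3$, where $\kappa_2=\pi$) then yields
\[
V(P)<\frac{S(P)^2}{3\pi\,\diam(P)}\le\frac{\left(\sum_{i=1}^m S_i\right)^2\sqrt{\tan\delta}}{3\pi\cos\varepsilon\,\sqrt{S_{\max}}}.
\]
Two elementary simplifications complete the proof. First, $S_i\le S_i^{3/4}S_{\max}^{1/4}$ summed over $i$ gives $\left(\sum_i S_i\right)^2/\sqrt{S_{\max}}\le\left(\sum_i S_i^{3/4}\right)^2$. Second, $\sin\delta\le 1/\sqrt 2$ forces $\cos\delta\ge 1/\sqrt 2$, and $\sin\delta\ge\sin\varepsilon$ gives $\cos\varepsilon\ge\cos\delta$, so $\sqrt{\tan\delta}/\cos\varepsilon=\sqrt{\sin\delta}/(\sqrt{\cos\delta}\cos\varepsilon)\le 2^{3/4}\sqrt{\sin\delta}$. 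Combining these produces an upper bound of the claimed shape with multiplicative constant $2^{3/4}/(3\pi)$, which is strictly smaller than $2^{-1/4}\pi^{-1}$ and therefore implies the stated inequality. The main obstacle is the convex-polygon shape analysis in the third paragraph: the total-variation identity on $\partial F_i$ and the resulting elongation estimate $W_i\le H_i\tan\delta$ are standard but must be handled carefully, and keeping track of the two distinct angles $\delta$ (slope of edges from the $z$-axis) and $\alpha_i$ (tilt of the facet plane) requires some attention.
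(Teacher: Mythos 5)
Your proof is correct, but it takes a genuinely different route from the paper's. The paper slices $P$ by horizontal planes, applies the planar isoperimetric inequality to each cross-section, and integrates; the key per-facet quantities are the maximal horizontal cross-section length $s_i^{\max}$ and the vertical extent $h_i$, related by $s_i^{\max}\le h_i\tan\delta$ and $S_i\ge (s_i^{\max})^2/(2\tan\delta)$, and the $\bigl(\sum_i S_i^{3/4}\bigr)^2$ shape emerges from a double-sum estimate on $\int\bigl(\sum_i s_i(z)\bigr)^2dz$. You instead prove that each facet is elongated \emph{within its own plane} ($W_i\le H_i\tan\delta$, via the total-variation identity on the convex boundary, which is a correct and standard argument), deduce $\diam(P)\ge\cos\varepsilon\sqrt{S_{\max}/\tan\delta}$, and then invoke the Gritzmann--Wills--Wrase inequality of Proposition~\ref{isop} --- which the paper uses only in its first proof of Theorem~\ref{main2}, not here --- to convert the diameter bound into a volume bound; the exponent $3/4$ then enters only through the elementary estimate $\sum_i S_i\le S_{\max}^{1/4}\sum_i S_i^{3/4}$. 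Your route is shorter, needs the elongation of only the largest facet, and in fact yields the smaller constant $2^{3/4}/(3\pi)<2^{-1/4}\pi^{-1}$, so it implies the stated bound. What the paper's slicing argument buys is that it generalizes directly to $n>3$ (Lemma~\ref{volume1} follows the identical scheme with the $(n-1)$-dimensional isoperimetric inequality), whereas the diameter-based route degrades in higher dimensions because Proposition~\ref{isop} only controls $V(K)^{n-2}$ by $S(K)^{n-1}/\diam(K)$. One cosmetic point: the sentence ``any segment of length $H_i$ in $F_i$ parallel to $v_i$'' presupposes such a segment exists, which need not hold; it is cleaner to take two points of $F_i$ extremal in the $v_i$-direction, whose difference has $z$-component exactly $H_i\cos\alpha_i$ (or simply to note $\diam(F_i)\ge H_i$), after which your computation goes through unchanged.
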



\begin{proof}
We denote by $s_i(z)$ the length of the horizontal cross-section
of the $i$-th facet at height $z$, and by $s_i^{\max}$ the maximum length of
such 
a horizontal cross-section.  Let $h_i$ be the ``height'' of the $i$-th face:
the difference between the maximum and the minimum
$z$-coordinates of its points.
Let $h_i'$ be the ``tilted height'' of this facet in
its own plane, i.e., the height when the plane is rotated into
vertical position about one of its horizontal cross-sections.

Since $(\sin \varepsilon )/\sin (\beta /2) < 1$, we have by
Lemma~\ref{slope} that $P$ has no horizontal edges.
Therefore, 
using the quantity $\delta $ introduced in
Lemma~\ref{slope}, we get
\begin{equation}\label{uppera}
s_i^{\max} \le h_i \cdot \tan \delta .
\end{equation}
Namely, from the minimal $z$-coordinate --- where
$s_i(z)=0$ --- 
$s_i(z)$ can increase only with a speed at most
$2\tan \delta \,\,( < \infty )$ to reach its maximal value
$s_i^{\max}$. This is clear for a vertical face, and for a nonvertical
face the speed is even smaller. Observe that the $i$-th facet lies in an
upwards circular cone with vertex the lowest point of the $i$-th facet and
directrices enclosing an angle $\delta $ with the $z$-axis. 
From the maximal value it must decrease again
with speed at most $2 \tan \delta $ till
$0$ at the maximal $z$-coordinate.

Therefore, using for (\ref{upperb}) inequality (\ref{uppera}),
\begin{align}
\label{upper}
S_i
&\ge
s_i^{\max} h_i'/2 \ge s_i^{\max} h_i/2
\\
&\ge
(s_i^{\max})^2 /(2\tan \delta).
\label{upperb}
\end{align}
This gives
\begin{equation}
\label{ell}
s_i^{\max} \le \sqrt{2S_i \tan \delta} .
\end{equation}
These relations allow us to bound the volume $V(P)$ as follows, by using the
isoperimetric inequality on each horizontal slice.
\begin{align}
V(P) \nonumber
&=
\int _ {- \infty} ^ {\infty }
(\text{area of cross-section of $P$ at height $z$})\, dz\\\nonumber
&\le
\int_ {- \infty} ^ {\infty }
\left( \sum_{i=1}^m s_i(z) \right) ^2 dz / (4\pi )\\\nonumber
&=
\sum_{i=1}^m
\sum_{j=1}^m \int_ {- \infty} ^ {\infty }
s_i(z)s_j(z) \,dz/(4\pi )\\*\label{intbound}
&\le
\sum_{i=1}^m
\sum_{j=1}^m  s_i^{\max}s_j^{\max} \min\{h_i,h_j\}/(4\pi )\\\nonumber
&\le
\sum_{i=1}^m
\sum_{j=1}^m  s_i^{\max}s_j^{\max} \sqrt{h_ih_j}/(4\pi )\\\nonumber
&=
\biggl( \sum_{i=1}^m s_i^{\max} \sqrt{h_i} \biggr) ^2 /(4\pi )\\\nonumber
&=
\left( \sum_{i=1}^m
\sqrt{  s_i^{\max}}
\sqrt{s_i^{\max}h_i} \right) ^2
\big/ (4\pi )\\\label{use}
&\le
\Bigl( \sum_{i=1}^m
(2S_i \tan \delta ) ^ {1/4}
\sqrt{2S_i} \,\Bigr) ^2
\big/(4\pi )\\\label{use-lemma-1}
&=
\frac{\Bigl( \sum_{i=1}^m
S_i^{3/4}\Bigr) ^2}
{\sqrt2\pi}
\cdot
\sqrt{
 \frac{(\sin \varepsilon )/\sin (\beta/2)}
{\sqrt{1- (\sin ^2 \varepsilon )/ \sin ^2 (\beta/2)}} }
\\
\nonumber
&\le
\frac{\left( \sum_{i=1}^m
S_i^{3/4}\right) ^2}
{\sqrt2\pi}
\cdot
\sqrt{\frac{\sqrt{2} \sin \varepsilon }{\sin (\beta/2)}}\,.
\end{align}
The first inequality uses the isoperimetric inequality.
The second inequality~\eqref{intbound} bounds the integral by an upper 
bound
of the non-negative 
integrand times the length of the interval where the integrand is
positive.
 For~\eqref{use},
 we have used
\eqref{upper} and~\eqref{ell}. To obtain
\eqref{use-lemma-1},
we have used Lemma~\ref{slope}.
The last inequality simplifies the denominator under the assumption
$(\sin \varepsilon )/\sin (\beta /2) \le 1/\sqrt{2}$ of the lemma.
\end{proof}


\subsection{Third proof of Theorem \ref{main2} for $n=3$ dimensions}
As in the first proof, we use Minkowski's Theorem F$'$.
We want to apply Lemma~\ref{volume}, making $\eps$ small.
Thus, we must let the normal vectors with given
lengths $S_i$ converge to the $xy$-plane, keeping their sum to be
$0$. Moreover,
the linear span of the outer unit facet normals should be ${\mathbb{R}}^3$.
Then we apply Minkowski's Theorem F$'$.
In the limiting configuration
the normals will lie in the $xy$-plane. They must form angles in 
$[\beta , \pi - \beta ]$ (with 
$\beta \in (0, \pi /2]$) with each other in order that 
Lemma~\ref{volume} should work.
Thus we must avoid parallel sides.

Will show that there is only one exceptional case in which parallel sides 
cannot be avoided.
Consider a planar convex
$m$-gon $M$ with sides $S_i$ that has the minimum number of parallel pairs of 
sides.
Let us assume that $M$ 
has a side such
that the sum of the two incident angles is different from $\pi $. Then
by a small length-preserving motion of this side and the neighbouring two sides,
one can achieve the following.
This side changes its direction while new parallel pairs
of sides are not created.
Therefore, $M$
can have a parallel pair of sides only if, for each
of these sides, the sum of the incident angles is $\pi $. That is,
we have four vertices that determine two parallel sides and 
whose outer angles (i.e., $\pi $ minus
the inner angles) have sum
$2 \pi $. Since the sum of all outer angles is $2 \pi
$, there are no more vertices and
$M$ 
must be a parallelogram. 
If its sides are not equal then we rearrange the side vectors so as to
obtain a (convex) deltoid that is not a parallelogram.
So the only remaining case is when
$m=4$ and $S_1=S_2=S_3=S_4$.
However, this case has been treated in Example~\ref{needle}:
a tetrahedron with four faces of equal areas and having an arbitrarily small
positive volume.
Suitable inflations provide examples for
all values of $S_i$.

Disregarding this exceptional case,
 we have now a strictly convex polygon in the $xy$-plane without parallel sides.
Assume that the angle between any two edges is in the range 
$[\beta_1,\pi-\beta_1]$ for some $\beta_1>0$.
We still need to perturb the sides so that the edge vectors
span ${\mathbb{R}}^3$.
Consider the first three consecutive vertices $A_1,A_2,A_3$ of $M$. Let us fix
$A_1$ and $A_3$. Rotate
the two sides $[A_1,A_2]$ and $[A_2,A_3]$ about the line through $A_1$ and
$A_3$ through a small angle $\alpha>0$ while keeping their lengths fixed.
The $m-2\ge2$ remaining side vectors span the $xy$-plane since they are not
parallel. At the same time, the vector ${\overrightarrow{A_1A_2}}$ 
points out of the $xy$-plane
and therefore the edge vectors span $\mathbb{R}^3$.

By making the angle of 
rotation $\alpha$ small enough, we can ensure the following.
The angle
between all edge vectors of the perturbed polygon $M(\alpha)$ is still in the 
range
$[\beta_2,\pi-\beta_2]$ for some fixed $\beta_2>0$. Moreover,
the angle $\varepsilon$
of the side vectors
with the $xy$-plane can be made arbitrarily small.
We use the edge vectors $\overrightarrow{S_i}$
of $M(\alpha)$ as outer normals and construct
the polytope $P$ by Minkowski's
Theorem F$'$, with $S_i := \| {\overrightarrow{S_i}} \|$ and
$u_i :={\overrightarrow{S_i}}/S_i$. 
By Lemma~\ref{volume}, the volume can be made arbitrarily small.
\qed
\medskip

\begin{remark}
In the polytope
that we have constructed, all facets except
two are vertical.
By going through the proof of~Lemma~\ref{volume}, one can
see the following.
It would have been sufficient to assume the
constraint $[\beta,\pi-\beta]$ on the angles for those pairs of facet
normals that involve at least one of
the two nonvertical facets.
\end{remark}


\begin{example}
For odd dimension $n=2k+1$,
there is a higher-dimensional generalization of  Example~\ref{needle}.
Consider a large regular
$k$-simplex of edge length $a:=1/\eps$ in the
$x_{k+2} \ldots x_n$-coordinate plane. It has $k+1$ vertices
$v_1,\dots,v_{k+1}$. At each vertex $v_i$, we draw a short segment of
length $b:=\eps$
centred at $v_i$
in the direction of the $x_{i}$-axis. 
The convex hull of the union of
these segments
is an $n$-simplex with congruent facets. The facet areas are
$\sim \mathrm{const} \cdot  a^kb^k=\const $,
while the volume is $ \mathrm{const} \cdot a^kb^{k+1}
=\const\cdot \eps$, which becomes arbitrarily small as $\eps\to0$.
\end{example}


\subsection{
Third proof of Theorem~\ref{main2} for $\bf {n>3}$ dimensions}

\subsubsection{Construction of an almost flat spatial polygon}
\label{spatial}

As for $n=3$, we start with a planar convex $m$-gon $M$ 
in the $x_1x_2$-coordinate
plane, where  $m\ge n+1$. It has 
side vectors ${\overrightarrow{S_i}}$ (this notation will be preserved
also after perturbations)
with side lengths $S_i$, for
$1 \le i \le m $.

$M$ is contained in the $x_1 \ldots x_{n-1}$-coordinate hyperplane $X$.
By  small perturbations of the closed
polygon $M$ in $X$ that preserve the side lengths~$S_i$, we want to
achieve that
$$
\begin{minipage}{0.75\textwidth}
\raggedright
the perturbed (skew) closed polygon $M \subset X$  has no
$n-1$ side vectors lying in an 
 $(n-2)$-dimensional 
linear subspace of $X$.
\end{minipage}
\leqno{(*)}
$$

Initially,  $M$ lies in a 2-dimensional plane.
We will fulfill~$(*)$ by following the proof of Proposition \ref{main}.
Our desired conclusion is slightly stronger than in
Proposition~\ref{main}: there we excluded only the case that \emph{all}
vectors lie in a lower-dimensional subspace.

Assume that some $i\le n-1$ side vectors lie in a linear subspace of dimension 
less than
$i$, where $i$ is the smallest number with this property.
We will eliminate these linear dependencies iteratively.
We have already seen how we can avoid parallel edges ($i=2$).
The only case where parallel sides could not be avoided was 
$m=4$ and $S_1=S_2=S_3=S_4$ (a rhomb) and this happens only for $n \le m-1=3$.
Therefore, we
can assume $i \ge 3$.
 Observe also that
by small
perturbations the different vertices remain distinct.

We may any time rearrange the cyclic order of side vectors of $M$ as we want.
So we assume that the first $i$ side vectors
${\overrightarrow{S_1}}, \dots ,{\overrightarrow{S_i}}$ are linearly
dependent,
and
 any $i-1$ side vectors are linearly independent.
Number the vertices $A_j$ so that ${\overrightarrow{S_j}}$ 
goes from vertex $A_j$ to $A_{j+1}$
(indices taken modulo $m$).
We want remove this linear dependence by perturbing the vertex $A_{i}$.
For a technical reason, we have to first refine the order of the side
vectors even further.
Let
$ \sum _{j=1}^{i} \lambda _j  
{\overrightarrow{S_j}}=0$ be the (unique, up to a scalar factor)
linear dependence.
The $\lambda _j$'s cannot be all equal, since
$ \sum _{j=1}^{i}
{\overrightarrow{S_j}} =
 {\overrightarrow{A_1A_{i+1}}}
=0$ would imply that $A_1$ and $A_{i+1}$ are equal points.
The polygon has $m>n>i$ sides, and hence this is excluded.
Therefore, by permuting the side vectors if necessary, we
can assume that $\lambda_{i-1}\ne\lambda_i$. 
Since all ($i-1$)-subsets are linearly independent,
we have $\lambda_j\ne 0$ for all $j$, and thus
 we can assume
without loss of generality that
$\lambda_i=-1$.
In other words,
$ {\overrightarrow{S_i}} = \sum _{j=1}^{i-1} \lambda _j  
{\overrightarrow{S_j}}$,
with $\lambda _{i-1} \ne -1$. 

Now, fixing $A_{i-1}$, $A_{i+1}$, $\| {\overrightarrow{S_{i-1}}} \| $, and $\|  
{\overrightarrow{S_i}} \| $, we can perturb $A_i$ as follows.
The point $A_i$ moves on 
an $(n-3)$-dimensional
sphere in a hyperplane within $X$ with affine hull orthogonal to the segment
$[A_{i-1},A_{i+1}]$.
There is a small 
motion that moves $A_i$ out of the 
 subspace
$H := \aff\{ A_1,A_2,\dots , A_{i-1},
A_{i+1} \} $ of $X$. 

Now we show that the dimension of this subspace $H$ is in fact
$i-1$, which implies that the dimension of
 $ \aff\{ A_1,A_2,\dots,
A_i,A_{i+1} \} $ increases by $1$ and
${\overrightarrow{S_1}}, \dots ,{\overrightarrow{S_i}}$ become linearly
independent.
Clearly, $\dim H$ cannot be greater than $i-1 = \dim
 \aff\{ A_1,A_2,\dots,
A_{i-1},A_{i+1} \} $.
To see that
$\dim H = i-1$, we note that
 the vectors 
$ {\overrightarrow{A_1A_2}} = {\overrightarrow{S_1}}$,
${\overrightarrow{A_2A_3}} = {\overrightarrow{S_2}}, \dots , 
{\overrightarrow{A_{i-2}A_{i-1}}} = {\overrightarrow{S_{i-2}}}$, 
${\overrightarrow{A_{i-1}A_{i+1}}} =
{\overrightarrow{S_{i-1}}} + \sum _{j=1}^{i-1} \lambda _j
{\overrightarrow{S_j}}$ are linearly independent,
since $\lambda _{i-1} \ne -1$,
 and hence, their linear span 
has dimension $i-1$.

Thus, we have established that, by perturbing $A_i$, the vectors
${\overrightarrow{S_1}}, \dots ,{\overrightarrow{S_i}}$ become linearly
independent. If the perturbation is small enough, then
every set of side vectors 
that was
linearly independent
before the motion remains linearly independent.
Therefore, the number of linearly dependent
$i$-tuples of side vectors of $M$ 
decreases.
A finite number of iterations 
eliminates all linearly dependent
$i$-tuples, and $i$ can be increased (till $n-1$), until
\thetag{$*$} is eventually established.
This concludes the construction of the polygon~$M$.

Condition~\thetag{$*$} can be rephrased in the following way.
The
determinant of any $n-1$ normed side vectors
${\overrightarrow{S_i}}/S_i$
of $M$ (i.e., the signed volume of the parallelepiped spanned by them)
is nonzero.
We denote by $b>0$ the smallest
absolute value of these determinants. This bound will play the
role of the sine of the angle bound $\beta$ in
Lemmas~\ref{slope} and~\ref{volume}.


\subsubsection{Steep facets imply steep edges}

We generalize Lemma~{\ref{slope}} to higher dimensions:

\begin{lemma}\label{slope1}
Let $n\ge 3$, and consider $n-1$ hyperplanes in $\mathbb{R}^n$ making
an angle at most $\eps < \pi/2$ with the vertical axis \textup(the
$x_n$-axis\textup).
If their unit normal vectors $v_1, \dots , v_{n-1}$
span an $(n-1)$-parallelotope of volume
at least $b\,\,(>0)$ then they intersect in a line. The angle
between this
line and the vertical direction is bounded by
$$ \delta := \arcsin \frac{(n-1)^{3/2} \sin\eps}b,$$
provided that ${(n-1)^{3/2} \sin\eps}\le b$.

For fixed $n$, the order of magnitude of this bound $\delta$ as a
function of $\varepsilon $ and $b $ is optimal.
More precisely, for any $\eps$ and $b$, where $0<\eps<\pi/2$ and $0<b \le 1$,
there are instances with
$\sin \delta =\min\{1,(\sin \varepsilon ) / \sin ((\arcsin b) /2)\}$.
\end{lemma}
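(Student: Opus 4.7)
The plan is to express $n_0$, the unit vector orthogonal to $v_1,\dots,v_{n-1}$, as (up to sign) the Hodge dual of $v_1\wedge\cdots\wedge v_{n-1}$ divided by $b$, and then to bound its horizontal components coordinate by coordinate via Hadamard's inequality. A hyperplane makes angle at most $\varepsilon$ with the $x_n$-axis exactly when its unit normal satisfies $|\langle v_i,e_n\rangle|\le\sin\varepsilon$; writing $v_i=w_i+c_ie_n$ with $w_i$ horizontal and $|c_i|\le\sin\varepsilon$, positivity of $b$ forces the $v_i$'s to be linearly independent, so the orthogonal complement of their linear span is a unique line $\mathbb{R}\,n_0$, each of the $n-1$ hyperplanes contains a translate of this line, and where their intersection is nonempty it is a line of direction $n_0$. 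The whole task is thus to bound the angle between $n_0$ and $e_n$.

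I would then use the standard formula that the $i$-th coordinate of $b\,n_0$ equals, up to a sign, the $(n-1)\times(n-1)$ minor $M_i$ of the $(n-1)\times n$ matrix with rows $v_1,\dots,v_{n-1}$ obtained by deleting column $i$. The vertical component $M_n$ is $\det W$, where $W$ has rows $w_i$, while for $i<n$ the minor $M_i$ contains the column $c=(c_1,\dots,c_{n-1})^T$. Cofactor-expanding $M_i$ along that column yields $M_i=\sum_{j=1}^{n-1}\pm c_j N_{ij}$, where each $N_{ij}$ is an $(n-2)\times(n-2)$ minor whose rows are sub-vectors of the horizontal parts $w_k$, and hence of Euclidean norm at most $1$. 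Hadamard's inequality gives $|N_{ij}|\le 1$, so $|M_i|\le(n-1)\sin\varepsilon$ for each $i<n$. Summing the squared horizontal coordinates of $n_0$,
$$\sin^2\delta=\frac{1}{b^2}\sum_{i=1}^{n-1}M_i^2\le\frac{(n-1)\cdot(n-1)^2\sin^2\varepsilon}{b^2}=\frac{(n-1)^3\sin^2\varepsilon}{b^2},$$
which yields $\sin\delta\le(n-1)^{3/2}\sin\varepsilon/b$ whenever the right-hand side is at most $1$, which is precisely the hypothesis.

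For the sharpness statement I would reduce to the two-vector situation of Lemma~\ref{slope}. Take $v_1=e_1,\dots,v_{n-3}=e_{n-3}$ as horizontal orthonormal vectors and place $v_{n-2},v_{n-1}$ symmetrically in the three-dimensional subspace spanned by $e_{n-2},e_{n-1},e_n$, at mutual angle $\arcsin b$ and with vertical components $\pm\sin\varepsilon$, mimicking the extremal configuration for Lemma~\ref{slope} inside this subspace. The parallelotope volume is then exactly $b$, the direction $n_0$ lies in this same three-dimensional subspace, and the $n=3$ analysis of Lemma~\ref{slope} gives $\sin\delta=\sin\varepsilon/\sin((\arcsin b)/2)$ (capped at $1$). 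The main obstacle I expect is the cofactor bookkeeping: carefully verifying that each $(n-2)\times(n-2)$ sub-minor really has rows of Euclidean norm at most $1$, and tracking the factors of $n-1$ through the Hadamard and Cauchy--Schwarz steps to land on the $(n-1)^{3/2}$ constant rather than something weaker.
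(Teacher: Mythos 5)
Your argument is correct and reaches the same constant $(n-1)^{3/2}$, but it is organized differently from the paper's proof. The paper first rotates coordinates so that the intersection line $\ell$ becomes the last axis, writes the normals as $v_i=\binom{v_i'}{0}$, and then solves the linear system $V^Tp'=(r_1,\dots,r_{n-1})^T$ for the horizontal part $p'$ of the \emph{original} vertical direction, bounding the entries of $(V^T)^{-1}$ by $1/b$ via the adjugate formula and Hadamard's inequality applied to $(n-2)\times(n-2)$ minors with rows of norm at most $1$. You instead stay in the original frame and compute the line direction itself as the cofactor vector (Hodge dual) of $v_1\wedge\dots\wedge v_{n-1}$, bounding each horizontal minor $M_i$ by cofactor expansion along the column of vertical components $c_j$, again via Hadamard on $(n-2)\times(n-2)$ minors. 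The two computations are essentially dual to one another (inverse matrix versus cofactor vector), and each accumulates the same factors: $n-1$ terms of size $\sin\varepsilon$ per coordinate, then $\sqrt{n-1}$ from summing $n-1$ squared coordinates. Your version avoids the change of coordinates, which is a mild simplification. One small imprecision: the normalizing factor for $n_0$ is the parallelotope volume $\lVert\star(v_1\wedge\dots\wedge v_{n-1})\rVert=\bigl(\sum_{i=1}^{n}M_i^2\bigr)^{1/2}$, which is only known to be $\ge b$, so your displayed identity $\sin^2\delta=b^{-2}\sum_{i=1}^{n-1}M_i^2$ should be the inequality $\sin^2\delta=\bigl(\sum_{i<n}M_i^2\bigr)/\bigl(\sum_{i\le n}M_i^2\bigr)\le b^{-2}\sum_{i<n}M_i^2$; the estimate still goes in the required direction. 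Your sharpness construction --- embedding the extremal pair of Lemma~\ref{slope} into a three-dimensional coordinate subspace containing the $x_n$-axis and padding with horizontal coordinate hyperplanes --- is the same as the paper's, up to a relabeling of the coordinates.
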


\begin{proof}
Since the unit normal vectors $v_1,\dots,v_{n-1}$ are linearly
independent,
the intersection of the hyperplanes is a line $\ell$.
Let us choose a new orthonormal coordinate system where $\ell$ is the last
coordinate axis. Then the last coordinate of the vectors $v_i$ is
zero, and
we may write these vectors as $v_i=\binom{v_i'}0$ with $v_i'
\in\mathbb{S}^{n-2}
\subset\mathbb{R}^{n-1}$.
By assumption, the $(n-1)\times (n-1)$ matrix $V=(v_1',\dots,v_{n-1}')$
has determinant of absolute value $\lvert \det V \rvert \ge b$.

Let $p=\binom{p'}{p_n}$, with $p'
\in\mathbb{R}^{n-1}$,
be the unit vector of the original positive $x_n$-direction
in the new coordinate system.
Its angle $\delta \in [0, \pi /2]$ with the line $\ell$ 
satisfies
$\cos \delta = \lvert  p_n \rvert$
and $\sin\delta = \lVert p'\rVert$,
and thus our goal is to show that
\begin{equation}
  \label{eq:goal}
 \lVert p'\rVert \le (n-1)^{3/2}\frac{
 \sin\eps}{b}\,.
\end{equation}
Let $\alpha_i$ denote the angle between
$p$ and the normal $v_i$.
By the angle assumption on the hyperplanes,
we have
$\pi/2-\eps\le  \alpha_i\le\pi/2+\eps$. Therefore,
with
 $r_i := \cos \alpha_i = \langle p,v_i\rangle
 = \langle p',v_i'\rangle$,
we have $\lvert r_i\rvert \le \sin\eps$.

The $n-1$ equations  $\langle p',v_i'\rangle =r_i$ form a linear
system
$(p')^T V = (r_1,\dots,r_{n-1})$ (the column vectors of $V$ being the
$v_i$'s), i.e., $V^Tp'=(r_1,\dots , r_{n-1})^T$, 
which
determines
$p'$ uniquely:
\begin{equation}
  \label{eq:inverse}
p' = (V^T)^{-1}(r_1,\dots,r_{n-1})^T \,.
\end{equation}
We write 
$\mathop{\mathrm{adj}} (V^T)$ for the transpose of the matrix whose
entries are the
signed cofactors of the respective entries of $V^T$.
By the formula $(V^T)^{-1}=\mathop{\mathrm{adj}} (V^T)/\det (V^T)$, 
each entry
of $(V^T)^{-1}$ is an $(n-2)\times (n-2)$ subdeterminant of
$V^T$ divided by $\pm \det V^T$.
The rows of the submatrices of
$V^T$ are vectors of length at most $1$ and therefore
these subdeterminants are bounded in absolute value by $1$.
It follows that the entries of $(V^T)^{-1}$ 
are bounded in absolute value by $1/b$.
Since the $|r_i|$'s are at most
$\sin\eps$, we get from
\eqref{eq:inverse} that the $n-1$ entries of $p'$ are bounded by
$(n-1)(\sin\eps)/b$
in absolute value. Hence we have proved~\eqref{eq:goal}.

To establish the lower bound, we can lift the tight three-dimensional
example from Lemma~{\ref{slope}} to $n$ dimensions. For $\varepsilon \le \beta
/2$,
the enclosed angle will be the same as in three dimensions, namely
$\arcsin ( (\sin \varepsilon )/  \sin ( \beta /2) ) $,
where $\sin\beta = b$. 
For 
$\varepsilon > \beta /2$, we use the example with $\varepsilon = \beta /2$.
We embed the 3-dimensional example
into ${\mathbb{R}}^n$
by a linear isometry that maps the positive $x,y,z$-axes to the positive
$x_1,x_2,x_n$-coordinate axes of ${\mathbb{R}}^n$.
(The ``vertical'' direction is now the direction of the $x_n$-axis.)
The two $2$-planes 
of the three-dimensional example are turned into hyperplanes as follows.
We
replace them by their inverse images under the orthogonal projection of 
${\mathbb{R}}^n$ to the $x_1x_2x_n$-coordinate
subspace. Simultaneously, we add
the hyperplanes with equations $x_3=0, \dots ,x_{n-1}=0$.
\end{proof}


\subsubsection{Polytopes with steep facets have small volume}

\begin{lemma}\label{volume1}
Let $n > 3$ be an integer. Assume that a convex polytope $P \subset
{\mathbb R}^n$
has facet areas $S_1,\ldots,S_m$. Moreover, its outer unit facet normals
enclose an angle at most $\varepsilon \in (0, \pi /2)$
with the $x_1 \ldots x_{n-1}$-plane. Also
the volume of the $(n-1)$-parallelepiped spanned by
any $n-1$ unit facet normals of $P$
is at least $b>0$.
Then its volume 
is bounded by
$$
V(P) \le {\rm{const}}_n \cdot \Bigl( \sum_{i=1}^m S_i ^{n/(2n-2)} \Bigr) ^2
\cdot \left( \frac{\sin \varepsilon}b \right) ^{1/(n-1)},
$$
if \,$\sin ^2 \varepsilon \le b^2/[2(n-1)^3]$.

On the other hand,
for $n\ge 3$ and any $m \ge 2n$,
there exists a suitable \,$\varepsilon _0 \in (0, \pi /4)$, 
such that the following holds. For any \,$\varepsilon \in
(0, \varepsilon _0)$, there exists a convex polytope $P(\varepsilon ) \subset
{\mathbb{R}}^n$, with $m$ facets, with the following properties. It
satisfies the hypotheses of this lemma \textup(except the one about the facet
areas\textup), with $b$ depending only
on $n$ and $m$, such that
\begin{align*}
V \left( P(\varepsilon ) \right) &\ge {\text{\rm{const}}} _n' \cdot
S \left( P(\varepsilon ) \right) ^{n/(n-1)} \cdot (\tan \varepsilon )^{1/(n-1)}
\\
&\ge  {\text{\rm{const}}} _n' \cdot m^{-(n-2)/(n-1)} 
\cdot \left( \sum _{i=1} ^m S_i(\varepsilon )
 ^{n/(2n-2)}
\right) ^2 \cdot (\tan \varepsilon )^{1/(n-1)} .
\end{align*}
Here, $S_1(\varepsilon ), \ldots ,S_m (\varepsilon )$ 
are the areas of the facets of $P(\varepsilon )$.
In particular, in the inequalities of Lemma~\ref{volume} and this lemma,
the order of magnitude as a function of $\varepsilon $ is optimal.
\end{lemma}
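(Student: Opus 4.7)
The plan is to follow Lemma~\ref{volume} with three ingredients upgraded to dimension~$n$. By Lemma~\ref{slope1} applied to the $n-1$ facet hyperplanes meeting at each edge of $P$, every edge makes angle at most $\delta:=\arcsin((n-1)^{3/2}\sin\varepsilon/b)$ with the vertical axis; the hypothesis $\sin^2\varepsilon\le b^2/[2(n-1)^3]$ ensures $\sin\delta\le 1/\sqrt 2$. For each facet $F_i$, put $s_i(z):=V_{n-2}(F_i\cap\{x_n=z\})$, let $h_i$ be the $x_n$-range of $F_i$, and set $s_i^{\max}:=\max_z s_i(z)$. The steepness of the edges traps $F_i$ inside a double cone, giving $s_i^{\max}\le c_n(h_i\tan\delta)^{n-2}$, while Brunn--Minkowski applied inside $F_i$ yields concavity of $s_i(z)^{1/(n-2)}$ and therefore $S_i\ge c_n' s_i^{\max}h_i$. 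Combining, $s_i^{\max}\le c_n''(S_i\tan\delta)^{(n-2)/(n-1)}$ and $h_i\le c_n''' S_i/s_i^{\max}$.

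Set $p:=(n-1)/(n-2)\in(1,2]$. Fubini gives $V(P)=\int V_{n-1}(P\cap\{x_n=z\})\,dz$, and the $(n-1)$-dimensional isoperimetric inequality applied to each horizontal slice (whose $(n-2)$-perimeter is at most $\sum_i s_i(z)$) yields $V(P)\le c_n\int(\sum_i s_i(z))^p\,dz$. The key new trick is the pointwise inequality $(\sum_i x_i)^p\le(\sum_i x_i^{p/2})^2$ on $[0,\infty)$, valid for $p\le 2$ by squaring the subadditivity of the concave function $x\mapsto x^{p/2}$. Expanding the square, applying Cauchy--Schwarz to each $\int s_i(z)^{p/2}s_j(z)^{p/2}\,dz$, and using $\int s_i^p\,dz\le(s_i^{\max})^p h_i$ factorise the estimate as
\[
V(P)\le c_n\Bigl(\sum_i (s_i^{\max})^{p/2}\sqrt{h_i}\Bigr)^{\!2}.
\]
Substituting the bounds from the previous paragraph collapses each summand to $c_n S_i^{n/(2n-2)}(\tan\delta)^{1/(2(n-1))}$. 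Converting $\tan\delta$ to $c'\sin\varepsilon/b$ (using $\sin\delta\le 1/\sqrt 2$) yields the stated upper bound.

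For the optimality part I construct $P(\varepsilon)$ realising the scaling as a perturbed bipyramid. Take a convex $(n-1)$-polytope $B$ of unit $(n-1)$-volume in the horizontal hyperplane, with $\lceil m/2\rceil$ facets, in sufficiently generic position so that condition~$(*)$ of the spatial-polygon construction (Section~\ref{spatial}) holds. Form the bipyramid over $B$ with apices $(0,\dots,0,\pm 1/\tan\varepsilon)$, splitting one facet by a tiny cut if $m$ is odd. A direct computation verifies that every side-facet normal makes angle $\Theta(\varepsilon)$ with the horizontal, that any $n-1$ of these normals span a parallelepiped of volume bounded below by some $b=b(n,m)>0$, and that $V(P(\varepsilon))\sim 1/\tan\varepsilon$, $S(P(\varepsilon))\sim m/\tan\varepsilon$. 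Hence $V(P(\varepsilon))\ge\mathrm{const}_n'\,S(P(\varepsilon))^{n/(n-1)}(\tan\varepsilon)^{1/(n-1)}$. The second inequality of the lemma is purely analytic: the power mean inequality yields $(\sum_i S_i^{n/(2n-2)})^2\le m^{(n-2)/(n-1)}S(P(\varepsilon))^{n/(n-1)}$.

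The principal obstacle is the upper bound, specifically finding the right inequality to pass from $(\sum_i s_i(z))^p$ to a factorised expression amenable to substitution of the bounds on $s_i^{\max}$ and $h_i$. In the planar case $n=3$ one has $p=2$, and the identity $(\sum_i s_i)^2=\sum_{i,j} s_is_j$ together with $\min(h_i,h_j)\le\sqrt{h_ih_j}$ does this automatically. For $n\ge 4$ the exponent $p$ is non-integer; the concavity inequality $(\sum_i x_i)^p\le(\sum_i x_i^{p/2})^2$ (holding exactly because $p/2\le 1$) recovers the analogous factorised form, producing the expression $(\sum_i S_i^{n/(2n-2)})^2$ with exponent~$2$ outside the sum, as stated.
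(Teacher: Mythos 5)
Your proof of the upper bound is correct and follows essentially the same route as the paper: Lemma~\ref{slope1} for the steepness of the edges, the double-cone trap giving $s_i^{\max}\le c_n(h_i\tan\delta)^{n-2}$, the bound $S_i\ge s_i^{\max}h_i/(n-1)$ (the paper derives it from two cones contained in the facet, you from Brunn--Minkowski; both give the same constant), the $(n-1)$-dimensional isoperimetric inequality on horizontal slices, and the subadditivity of $t^{p/2}$ with $p=(n-1)/(n-2)$ to pass from $\bigl(\sum_i s_i(z)\bigr)^p$ to $\bigl(\sum_i(s_i^{\max})^{p/2}\sqrt{h_i}\bigr)^2$. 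Your use of Cauchy--Schwarz for the cross terms, in place of the paper's bound of each integral by its supremum times $\min\{h_i,h_j\}\le\sqrt{h_ih_j}$, is an immaterial variant. The exponent bookkeeping leading to $S_i^{n/(2n-2)}(\tan\delta)^{1/(2n-2)}$ checks out.

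The optimality part, however, has a genuine gap. If you form the bipyramid over a \emph{single} base $B$ with apices $(0,\dots,0,\pm 1/\tan\varepsilon)$, then the two facets lying over one and the same facet of $B$ (one towards each apex) have unit normals of the form $(u\cos\theta,\sin\theta)$ and $(u\cos\theta,-\sin\theta)$ with the \emph{same} horizontal direction $u$ and with $\theta=\Theta(\varepsilon)$. These two vectors enclose the angle $2\theta$, so every $(n-1)$-parallelotope having both of them among its edge vectors has volume at most $\sin(2\theta)\to 0$ as $\varepsilon\to 0$. Hence the assertion that ``any $n-1$ of these normals span a parallelepiped of volume bounded below by some $b=b(n,m)>0$'' is false for your construction, and no genericity of $B$ alone can repair this: the degeneracy is forced by the mirror symmetry of the bipyramid. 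This is precisely why the paper intersects two half-infinite pyramids over two \emph{different} bases $R^+$ and $R^-$ placed in mutually general position, so that all $m$ horizontal directions of the facet normals are distinct and any $n-1$ of them span volume at least $b$ in $\mathbb{R}^{n-1}$; the perturbed normals then still span volume at least some $b'>0$ for small $\varepsilon$. (Your facet count is also off for odd $m$: a bipyramid over a base with $\lceil m/2\rceil$ facets has $m+1$ facets when $m$ is odd, and splitting a facet only increases the count; the two-base construction with $m^++m^-=m$, $m^\pm\ge n$, handles every $m\ge 2n$ directly.) On the plus side, your apex height $\pm 1/\tan\varepsilon$ is the one consistent with the normals making angle $\Theta(\varepsilon)$ with the horizontal and with the scaling $V/S^{n/(n-1)}\sim(\tan\varepsilon)^{1/(n-1)}$, and your power-mean derivation of the second displayed inequality is correct.
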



\begin{proof}
We begin with the proof of the upper estimate.
We denote by $s_i(x_n)$ the $(n-2)$-volume of the horizontal cross-section
of the $i$-th facet at height $x_n$. Moreover, we denote 
by $s_i^{\max}$ the maximum $(n-2)$-volume of such 
a horizontal cross-section.  Let $h_i$ be the ``height'' of the $i$-th facet:
the difference between the maximal and the minimal
$x_n$-coordinates of its points.
Let $h_i'$ be the ``tilted height'' of this facet in
its own hyperplane. That is, the height when the hyperplane is rotated into
vertical position about one of its horizontal cross-sections.

Now, since $\sin ^2 \varepsilon \le b^2/[2(n-1)^3]$,
the angle $\delta $ from
Lemma~\ref{slope1} lies in $(0, \pi /2)$. Hence, by Lemma~\ref{slope1}, $P$
has no horizontal edges, and thus, also no horizontal $k$-faces 
for any $k \in \{
1, \ldots ,n-2 \} $. Therefore, once more by
Lemma~\ref{slope1}, we know that 
every facet is contained in two
rotationally symmetric cones with $(n-1)$-balls as bases.
One cone
has its apex at the unique lowest point of this
facet
and extends upwards from there.
Its axis is vertical (parallel to the $x_n$-direction),
and the directrices enclose an angle 
$\delta $ with the $x_n$-axis.
The other cone extends downwards from the 
highest point of the facet and has a vertical axis and directrices enclosing an 
angle $\delta $ with the $x_n$-axis.
We use the upwards cone 
from the minimal height
till the arithmetic mean of the minimal and maximal heights. We use
the downward cone 
for the other half of the vertical extent of the facet.
By this argument, we can bound
the maximum cross-section area 
$s_i^{\max}$
of the $i$-th facet as follows.
\begin{equation}
\label{lower}
s_i^{\max} \le \left( (h_i/2) \cdot \tan \delta \right) ^{n-2} 
\cdot \kappa _{n-2} \,.
\end{equation}
(From the minimal height
till the arithmetic mean of the minimal and maximal heights we have the
following.
Any horizontal cross-section of the cone is contained in some
$(n-1)$-ball of radius at most 
$R := (h_i/2) \cdot \tan \delta$. Thus, any horizontal
cross-section
of the facet lies inside the intersection of its own affine hull 
with the upwards cone. 
That is, it lies in the intersection of an $(n-2)$-dimensional affine subspace
with a cone whose base is
an $(n-1)$-ball of radius at most $R$. 
Hence, this horizontal cross-section lies
inside some $(n-2)$-ball of radius at most~$R$. A similar argument holds
for the downward cone.)
Moreover, we also have
\begin{equation}
\label{upper1}
S_i \ge
s_i^{\max} h_i'/(n-1) \ge
s_i^{\max} h_i/(n-1) .
\end{equation}
Let us rewrite \eqref{lower} and \eqref{upper1} as follows.
\begin{align}
 h_i^{-(n-2)}\cdot  s_i^{\max}
 &\le
\left( (\tan \delta )/2 \right) ^{n-2} \cdot \kappa _{n-2} 
\label{A}\\
 h_i\cdot s_i^{\max}  
&\le 
(n-1)S_i .
\label{B}
\end{align}
We multiply the
$1/[(2n-2)(n-2)]$-th power
of \eqref{A} with the $
n/(2n-2)$-th power of \eqref{B} to get an inequality that we will need.
\begin{multline}
\label{eq:needed}
(s_i^{\max})^{(n-1)/(2n-4)}
\sqrt{h_i}
\\
\le
\left( (\tan \delta )/2 \right) ^{1/(2n-2)} \cdot (\kappa _{n-2})
^{1/[(2n-2)(n-2)]}
\cdot ((n-1)S_i)^{n/(2n-2)} .
\end{multline}
Let $K := [(n-1)^{n-1} \kappa _{n-1} ] ^{-1/(n-2)}$ denote the
constant of the isoperimetric inequality in $n-1$ dimensions:
\begin{equation}
  \label{eq:iso}
V_{n-1}(C)\le K\cdot
(V_{n-2}(\partial C))^{(n-1)/(n-2)}  
\end{equation}
(for $C \subset {\mathbb{R}}^{n-1}$). Now we can bound the volume as follows.
\begin{align} \nonumber
V(P)
&=
\int _ {- \infty} ^ {\infty }
\bigl[\text{$(n-1)$-volume of the cross-section of $P$ at height 
$x_n$}\bigr]\, dx_n
\\ \nonumber
& \le
\int_ {- \infty} ^ {\infty }
\left[ \biggl( \sum_{i=1}^m s_i(x_n) \biggr) ^{(n-1)/(2n-4)} \right]^2 dx_n
\cdot K
\\ \nonumber
& \le
\int_ {- \infty} ^ {\infty }
\left[  \sum_{i=1}^m s_i(x_n) ^{(n-1)/(2n-4)}  \right]^2 dx_n
\cdot K
\\ \nonumber
&=\int_ {- \infty} ^ {\infty }
\sum_{i=1}^m
\sum_{j=1}^m 
s_i(x_n)^{(n-1)/(2n-4)}s_j(x_n)^{(n-1)/(2n-4)} \,dx_n \cdot K
\\ \nonumber
&=
\sum_{i=1}^m
\sum_{j=1}^m \int_ {- \infty} ^ {\infty }
s_i(x_n)^{(n-1)/(2n-4)}s_j(x_n)^{(n-1)/(2n-4)} \,dx_n \cdot K
\\ \label{bound-interval}
&\le
\sum_{i=1}^m
\sum_{j=1}^m  (s_i^{\max})^{(n-1)/(2n-4)}(s_j^{\max})^{(n-1)/(2n-4)}
\min\{h_i,h_j\}\cdot K
\\ \nonumber
&\le
\sum_{i=1}^m
\sum_{j=1}^m  (s_i^{\max})^{(n-1)/(2n-4)}(s_j^{\max})^{(n-1)/(2n-4)}
\sqrt{h_ih_j} \cdot K
\\
&\le\label{use-prepared}
({\tan \delta}) ^{1/(n-1)} \cdot
2  ^{- 1/(n-1)} \cdot
(\kappa _{n-2}) ^{1/[(n-1)(n-2)]}\cdot
(n-1)^{n/(n-1)}
\\ \nonumber
&\qquad\times
\biggl(
\sum_{i=1}^m
S_i^{n/(2n-2)} \biggr) ^2 
\cdot K
\\
&= \label{use-lemma3}
\const_n \cdot 
\biggl(
\sum_{i=1}^m
S_i^{n/(2n-2)} \biggr) ^2 
\biggl( \frac{(n-1)^{3/2} (\sin \varepsilon )/b}
{\sqrt{1- (n-1)^3 (\sin ^2 \varepsilon )/ b ^2}} \biggr) ^{1/(n-1)} 
\\ \nonumber
&\le
\const'_n \cdot 
\biggl(
\sum_{i=1}^m
S_i^{n/(2n-2)} \biggr) ^2 
\cdot \left( \frac{\sin \varepsilon}b \right) ^{1/(n-1)}\,.
\end{align}
The first inequality uses the isoperimetric inequality~\eqref{eq:iso}. The 
second inequality
uses the concavity of the function $t^{(n-1)/(2n-4)}$ for $t \in [0, \infty )$
and its vanishing at $t=0$. (Observe
that $0 < (n-1)/(2n-4) \le 1$.)
Inequality \eqref{bound-interval}, as in
\eqref{intbound},
bounds the integral of a non-negative function
by an upper bound of the integrand times
the length of the interval where the integrand is
positive.
For
\eqref{use-prepared}, we have used the bound
\eqref{eq:needed} that we derived above.
Inequality
\eqref{use-lemma3} uses the bound $\delta$ from
Lemma~\ref{slope1}.
Finally, by hypothesis, the expression under the square root
in the denominator of~\eqref{use-lemma3}
is bounded below by
${1- (n-1)^3 (\sin ^2 \varepsilon )/b^2} \ge 1/{2}$.
We have therefore established the claimed upper bound.
\medbreak

Now we give the example for the lower bound for $n \ge 3$ and $m \ge 2n$.
Let $\varepsilon \in (0, \varepsilon _0 )$, where $\varepsilon _0 \in (0, \pi
/4)$ will be
chosen later. Let us write ${\mathbb R}^n={\mathbb R}
^{n-1} \oplus {\mathbb R}$. Let $T^+, T^- \subset {\mathbb{R}}^{n-1}$
be regular $(n-1)$-simplices circumscribed about
the unit ball $B^{n-1}$ of ${\mathbb R}^{n-1}$. Put them in such a general
position w.r.t.\ each other 
so that
any $n-1$ of
 their altogether $2n$ facet outer normals
linearly
span ${\mathbb{R}}^{n-1}$. 
Let $n \le m^+,m^-$ and $m=m^++m^-$. Let $R^{\pm }$ be obtained from $T^{\pm
}$ by intersecting it still with $m^{\pm }-n$ closed halfspaces in 
${\mathbb{R}}^{n-1}$, all containing $B^{n-1}$, with their boundaries touching
$B^{n-1}$. Then 
$$
B^{n-1} \subset R^{\pm } \subset T^{\pm } \subset (n-1)B^{n-1}.
$$
Let the altogether $m=m^++m^-$ facet outer unit normals of $R^+$ and $R^-$
satisfy the same condition of general position as above. Namely,
any $n-1$ of them linearly
span ${\mathbb{R}}^{n-1}$. 
Let $b>0$ be the minimum of the $(n-1)$-volumes of
the $(n-1)$-parallelotopes spanned by any $n-1$ of these altogether $m$
facet outer unit 
normals. 

Observe that for $n=3$ and $m \ge 2$,
the largest value of $b$ is $\sin ( \pi /m)$ --- if we do not begin the
construction with two regular triangles but allow any $m$ 
facet outer unit normals in ${\mathbb{S}}^{n-2} = {\mathbb{S}}^1$. 
For $n>3$, the maximal value of $b$ can be bounded
from above as follows --- again not beginning with two regular simplices,
but allowing any $m$ facet outer unit normals in ${\mathbb{S}}^{n-2}$. 
Let us choose altogether
$n-1$ outer unit normal vectors of $R^+$ and $R^-$, 
say, $u_1, \ldots , u_{n-1} \in {\mathbb{S}}^{n-2}$. 
We have 
\begin{align*}
\frac{|{\text{det}}\,(u_1, \ldots , u_{n-1})|}{(n-1)!} &= 
V_{n-2} ({\text{conv}}
\, \{ u_1, \ldots , u_{n-1} \} ) \cdot {\text{dist}}\,(0, {\text{aff}}\,
\{ u_1, \ldots , u_{n-1} \} ) /(n-1) 
\\
&\le
V_{n-2} ({\text{conv}} \, \{ u_1, \ldots , u_{n-1} \} )/(n-1).
\end{align*}
Here, $\text{dist}(\cdot,\cdot)$ denotes distance.
Thus, it suffices to bound $V_{n-2} ({\text{conv}}
\, \{ u_1, \dots , u_{n-1} \} )$ from above. This is the spherical analogue
--- for ${\mathbb{S}}^{n-2}$ ---
of the celebrated Heilbronn problem. This problem asks about the maximum of the
minimal $n$-volume of
$n$-simplices spanned by any $m$ points in $[0,1]^n$. This problem is poorly
understood. For an extensive
literature on this problem, see \cite[Ch.~11.2]{BrMoPa}.
Unfortunately, this spherical
variant cannot be reduced to the case of $[0,1]^{n-2}$ by taking the projection
of, say, the intersection of ${\mathbb{S}}^{n-2}$ with each orthant
to the tangent ${\mathbb{R}}^{n-2}$ at its centre. Namely,
the area of 
${\text{conv}} \, \{ u_1, \ldots , u_{n-1} \} $ can be large even if its
projection has a small area. In one direction, we have an
implication: large projection areas imply large areas --- but
large areas
still do not imply large values of $|{\text{det}}\,(u_1, \ldots , u_{n-1})|$.
However, this spherical variant 
is a special case of the $(n-1)$-dimensional 
Heilbronn problem for $[0,1]^{n-1}$. Namely, we can
just add to any set of $(n-1)$-dimensional vectors in ${\mathbb{S}}^{n-2}$
the single vector $0$ --- but probably
 we loose an essential  part of the information in this way.

Let $P^{\pm } (\varepsilon )$ 
be the half-infinite pyramid with vertex $(0, \ldots , 0, \pm
\tan \varepsilon )$ and base $R^{\pm }$. Then 
$$
C_i ^{\pm } (\varepsilon )
\subset P^{\pm } (\varepsilon )
\subset C_o ^{\pm }(\varepsilon ),
$$
where $C_i ^{\pm }(\varepsilon )$ and $C_o ^{\pm }(\varepsilon )$ 
is a half-infinite cone with
vertex $(0, \ldots , 0, \pm \tan \varepsilon )$ and base
$B^{n-1}$ and $(n-1) B^{n-1}$, respectively. Therefore,
$$
C_i (\varepsilon ) := C_i ^+ (\varepsilon ) \cap C_i ^-(\varepsilon ) \subset
P(\varepsilon ): = P^+(\varepsilon ) \cap P^-(\varepsilon ) \subset
C_o(\varepsilon ) := C_o ^+(\varepsilon ) \cap C_o ^-(\varepsilon ).
$$
Here, $C_i(\varepsilon )$ and $C_o(\varepsilon )$ are
double cones over $B^{n-1}$ and $(n-1)B^{n-1}$, respectively, with vertices
$(0, \ldots , 0, \pm \tan \varepsilon )$. Moreover, $P(\varepsilon )$ 
is a convex polytope
with $m$ facets, all facet outer unit normals enclosing an angle $\varepsilon
$ with the $x_1 \ldots x_{n-1}$-hyperplane. (Actually they enclose 
an angle $\varepsilon $
with the respective facet
outer unit normal of $R^+$ or $R^-$ in ${\mathbb{R}}^{n-1}$.) 
If \,$\varepsilon _0$ and thus also
$\varepsilon $ is
sufficiently small then still any $n-1$ facet outer unit normals of
$P(\varepsilon )$ span
an $(n-1)$-parallelotope of volume at least some $b' \in (0,b)$.

A routine calculation gives
$$
\frac{V\left( P(\varepsilon ) \right) }{S\left( P(\varepsilon ) \right)
^{n/(n-1)}} \ge 
\frac{V\left( C_i(\varepsilon ) \right) }{S\left( C_o(\varepsilon ) \right)
^{n/(n-1)}}
=
\frac{(\tan \varepsilon ) ^{1/(n-1)}} {n (2 \kappa _{n-1}) ^{1/(n-1)} 
[ 1+(n-1)\tan ^2 \varepsilon ] ^{n/(2n-2) }}.
$$
Therefore,
\begin{align*}
V \left( P(\varepsilon ) \right)
& \ge 
\frac{
S \left( P(\varepsilon ) \right) ^{n/(n-1)}
\cdot (\tan \varepsilon ) ^{1/(n-1)}}{
 n (2 \kappa _{n-1}) ^{1/(n-1)} 
[ 1+(n-1)\tan ^2 \varepsilon _0] ^{n/(2n-2)}}
\\
&\ge 
\frac{
m^{-(n-2)/(n-1)} \left( \sum _{i=1}^m S_i (\varepsilon ) 
^{n/(2n-2)} \right) ^2 \cdot 
(\tan \varepsilon ) ^{1/(n-1)}} 
{n (2 \kappa _{n-1}) ^{1/(n-1)} 
[ 1+(n-1)\tan ^2 \varepsilon _0] ^{n/(2n-2) }}. 
\end{align*}
Here, $S_1(\varepsilon ), \dots , S_m(\varepsilon )$ 
are the areas of the facets of $P(\varepsilon )$.
The second inequality is equivalent to H\"older's inequality for the 
numbers $S_i(\varepsilon )$, between their arithmetic mean and their power
mean with exponent $n/(2n-2) \in (0,1)$. Finally, 
observe that $\tan \varepsilon _0 \in
(0,1)$.
\end{proof}


\subsubsection{Conclusion of the proof}
Now we can finish the third proof of Theorem~\ref{main2} for $n>3$. We 
proceed as
for $n=3$ but instead of Lemma~\ref{volume} we use
Lemma~\ref{volume1}.
In Section~\ref{spatial} (see its last paragraph),
we have constructed a closed $m$-gon $M$ in the $(n-1)$-dimensional
subspace $X$ with the following property. 
Any $n-1$ normed side vectors ${\overrightarrow{S_i}}
/S_i$ span a parallelotope
of volume at least $b$.
We follow the third proof of Theorem~\ref{main2} for $n=3$. 
We take the first three consecutive vertices $A_1,A_2,A_3$ of $M$ and
``rotate''
$A_2$ out of the subspace $X$, keeping $A_1,A_3$ and the lengths 
$|A_1A_2|$ and $|A_2A_3|$ fixed.
We have a whole $(n-2)$-dimensional sphere on which $A_2$ can move, which
intersects $X$ orthogonally.
By bounding the distance by which $A_2$ moves by a suitable threshold
we can ensure the following.
Any $n-1$ normed side vectors ${\overrightarrow{S_i}}
/S_i$ still span a parallelotope
of volume at least $b'$ with some weaker bound $b'>0$.
The angle between $[A_1,A_2]$ or $[A_2,A_3]$ and the ``horizontal''
hyperplane $X$ can be made arbitrarily small.
Thus, Lemma~\ref{volume1} guarantees that the volume tends to zero as well.
\qed


\section{Proofs for the hyperbolic case}
For general concepts in hyperbolic geometry, we refer to \cite{AVS,Ba,
Cox,Li,Milnor,Pe}. In particular, a {\it{Lambert quadrilateral in}}
${\mathbb{H}}^2$ is a quadrilateral that has three right angles.


\subsection{Proof of Proposition \ref{necne}}
Let $H$ be the hyperplane of the facet $F_m$ of $P$ of area $S_m$.
Let $p\colon {\mathbb{H}}^n \to H$
be the orthogonal projection of ${\mathbb{H}}^n$ to $H$.
 The image by $p$ of the union of
the $m-1$ facets different from $F_m$ contains $F_m$.

Let $dS$ be a surface element at a point $x \in {\mathbb{H}}^n$. Let its
image by $p$ be the surface element $dS'$ at $p(x)$. Clearly, it suffices
to show that $dS' \le dS$. We may assume that $dS$ is an (infinitesimal)
$(n-1)$-ball of radius $dr$
in the tangent space $T_x({\mathbb{H}}^n)$ of ${\mathbb{H}}^n$ at
$x$.

First we deal with the case when $dS$ is orthogonal to the line $\ell \left(
x,p(x) \right) $. (For $x \in H$, we mean by $\ell \left(
x,p(x) \right) $
the line containing $x$ and orthogonal
to $H$.)
Then $dS'$ is an infinitesimal $(n-1)$-ball in $T_{p(x)}({\mathbb{H}}^n)$ of
some radius $dr'$.
 By the trigonometric formulas
of Lambert quadrilaterals in ${\mathbb{H}}^2$ 
 (see \cite[\S 29, (V)]{Pe}
or \cite[Theorem  2.3.1]{Buser}),
we have
$1 \le \cosh |xp(x)| = \left( \tanh(dr) \right) /\tanh(dr')$.
Hence, $dr' \le dr$ and therefore, $dS' \le dS$.

Now we extend this analysis to the case when $dS$ is not orthogonal to the line
$\ell \left( x,p(x) \right)$.
Then the image by $p$ of the infinitesimal $(n-1)$-ball $dS$
in $T_x({\mathbb{H}}^n)$ is an infinitesimal $(n-1)$-ellipsoid in
$T_{p(x)}({\mathbb{H}}^n)$. It  has $n-2$ semiaxes equal to $dr'$
and the $(n-1)$-st semiaxis smaller than $dr'$.
Hence, $dS'<dS$ in this case.

The case of equality is clear: the polytope must degenerate to the
doubly counted facet $F_m$.
\qed 


\subsection{Proof of Proposition \ref{necne'}}
From maximality of $S_m,S_{m-1}, \dots ,
S_3$, it follows that all
vertices lie at infinity. Namely, a vertex cannot be incident only to the
facets of areas $S_2,S_1$. Hence, also $S_2,S_1$ are maximal.
\qed 


\subsection{Proof of Proposition \ref{nonexhyp}}
Let $P$ be a convex polyhedron as in the proposition with respective facets
$F_1, \dots , F_m$. Let us consider any vertex $v$ of some facet $F_i$.
In the facets incident to $v$, the
angle of $F_i$ at $v$ is at most the sum of the angles of all
other facets incident to $v$. 
To see this, we 
intersect $P$ with an infinitesimally small sphere with centre at this
vertex (in the conformal model).  We obtain a convex spherical polygon whose
side lengths are the (convex) 
angles of the facets incident to $v$ at $v$,
all these angles being in $[0, \pi )$.

Summing these inequalities over all vertices $v$ of $F_i$ we obtain the 
following.
The sum $t_1$ of the angles of $F_i$ is at
most the sum $t_2$
of the angles of all other facets 
 at the vertices of $F_i$.
The sum $t_2$ is bounded above by the sum $t_3$
of \emph{all angles of all 
facets different
from} $F_i$.  The resulting
inequality $t_1 \le t_3$ is equivalent to the inequality to be proved.

Clearly, if we have at least one finite vertex with incident edges not in a
plane, then we have at least one strict
inequality among the summed inequalities. So in this case,
we have strict inequality in the proposition.
\qed 


The inequality $t_1 \le t_3$ from this proof
is discussed for the spaces ${\mathbb{R}}^3$ and $\mathbb{S}^3$ in
Remark~\ref{t1-t3} in Section~\ref{6.1}.


\subsection{Proof of Theorem~\ref{exhyp}}
\begin{prop}[{\cite[p.~127]{AVS}, \cite[Theorem 1, Proposition 2]{HM}}]
\label{maxv}
For $n \geq 2$, a simplex in $\mathbb{H}^n$ \textup(with vertices at infinity
admitted\textup) is of maximal volume if and
only if all its vertices are at infinity and it
is regular. It has a finite volume.
\end{prop}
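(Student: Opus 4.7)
The plan is to reduce the maximization problem in three stages: first show that a maximal-volume simplex must have all vertices ideal; then invoke the Haagerup--Munkholm inequality to pin down the regular ideal simplex as the unique maximizer; and finally verify that this maximal value is finite.

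For the first reduction, I would parametrize simplices (with finite or ideal vertices) as ordered $(n+1)$-tuples of distinct points in $\overline{\mathbb{H}^n}=\mathbb{H}^n\cup\partial_\infty\mathbb{H}^n$, modulo $\operatorname{Isom}(\mathbb{H}^n)$. The volume function extends continuously to this space (including the value $0$ on degenerate configurations lying in a hyperplane). The key step is a monotonicity argument: let $T=\operatorname{conv}\{v_0,v_1,\dots,v_n\}$ with $v_0\in\mathbb{H}^n$ finite, let $c$ be any interior point of the opposite facet $F=\operatorname{conv}\{v_1,\dots,v_n\}$, and consider the geodesic ray from $c$ through $v_0$, prolonged to a point $v_0'$ strictly beyond $v_0$ (possibly all the way to its endpoint on $\partial_\infty\mathbb{H}^n$). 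Then $v_0\in[c,v_0']\subset\operatorname{conv}\{v_0',v_1,\dots,v_n\}=:T'$, so $T\subsetneq T'$; moreover $T'$ contains an open neighborhood of $v_0$ that is disjoint from $T$, hence $V(T')>V(T)$. Consequently the supremum over configurations with $v_1,\dots,v_n$ fixed is never attained by a finite $v_0$. Iterating over each vertex in turn, any maximizer must have all $n+1$ vertices on $\partial_\infty\mathbb{H}^n$.

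For the second stage, once we are restricted to ideal simplices, the moduli space (configurations of $n+1$ distinct points on $\partial_\infty\mathbb{H}^n$ modulo $\operatorname{Isom}(\mathbb{H}^n)$) is a finite-dimensional manifold whose natural compactification consists of degenerate configurations (at least two vertices coinciding), on which the volume tends to $0$. For $n=2$, all ideal triangles are isometric and, by Gauss--Bonnet, have area $\pi$; in particular, they are (trivially) regular, which handles the base case. For $n\ge3$, I would invoke the Haagerup--Munkholm theorem, cited in the excerpt as~\cite{HM}, which asserts that the regular ideal $n$-simplex is the unique (up to isometry) ideal simplex of maximal volume. The core ingredient of their argument is a clever symmetrization/decomposition of an arbitrary ideal simplex that dominates its volume by that of the regular one; since this is an external nontrivial input, I would cite rather than reprove it. This step is the main obstacle, but it is already resolved in the literature.

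Finally, for finiteness, I would compute the volume of the regular ideal simplex explicitly. In the upper half-space model, place one vertex at $\infty$ and the remaining $n$ at the vertices of a regular Euclidean $(n-1)$-simplex $\Delta\subset\mathbb{R}^{n-1}\times\{0\}$; the regular ideal simplex is then the set of points above $\Delta$ bounded by the hemispheres spanned by each facet of $\Delta$, and its volume equals
\[
\int_{\Delta}\int_{h(x)}^{\infty}\frac{dx_n}{x_n^{\,n}}\,dx_1\cdots dx_{n-1},
\]
where $h(x)$ is the maximum of the heights of the bounding hemispheres over $x\in\Delta$. The inner integral evaluates to $\frac{1}{(n-1)h(x)^{n-1}}$, and since $h(x)$ is bounded below by a positive constant on $\Delta$ and $\Delta$ has finite Euclidean measure, the total integral converges. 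Alternatively, one can decompose the regular ideal simplex into orthoschemes and apply Schl\"afli's differential formula, or note that $\mathbb{H}^n$ admits a tessellation (or at least a locally finite cover) by congruent copies of this simplex appearing as fundamental cells of a discrete group of finite covolume. Combining the three stages yields the proposition.
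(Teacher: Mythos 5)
First, a point of reference: the paper does not prove Proposition~\ref{maxv} at all --- it is stated with the citations \cite[p.~127]{AVS} and \cite[Theorem~1, Proposition~2]{HM} and then used as a black box. So there is no in-paper argument to compare against, and your decision to outsource the genuinely hard step (uniqueness of the regular ideal simplex as volume maximizer among ideal simplices, $n\ge 3$) to Haagerup--Munkholm is exactly what the paper does. Your first reduction is sound: if $v_0$ is finite and $c$ is a relative interior point of the opposite facet, then pushing $v_0$ to $v_0'$ along the ray from $c$ yields $T\subset T'$ with $v_0$ in the interior of $T'$, so $T'\setminus T$ contains an open set and $V(T')>V(T)$; only your phrase ``an open neighbourhood of $v_0$ disjoint from $T$'' is wrong as written (since $v_0\in T$) and should be ``an open subset of $T'\setminus T$ accumulating at $v_0$''.

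The one genuine error is in the finiteness computation. With one vertex at $\infty$ and the others at $v_1,\dots,v_n\in\mathbb{R}^{n-1}\times\{0\}$, the only non-vertical facet is the one opposite $\infty$, carried by the single hemisphere over the circumsphere of $v_1,\dots,v_n$ (centre $c$, radius $R$); the remaining facets are vertical walls over the facets of $\Delta$. Hence $h(x)=\sqrt{R^2-|x-c|^2}$, and this is \emph{not} bounded below by a positive constant on $\Delta$: it vanishes precisely at the vertices $v_i$, which lie on the circumsphere. The integral $\int_\Delta \frac{1}{(n-1)}h(x)^{-(n-1)}\,dx$ does converge, but you need an estimate near each $v_i$: since $\langle v_j-v_i,\,c-v_i\rangle=R^2-\langle v_j-c,\,v_i-c\rangle>0$ for $j\ne i$, every direction in the tangent cone of $\Delta$ at $v_i$ has inward radial component at least $c_0|y|$, whence $R-|x-c|\ge c_1|x-v_i|$ and $h(x)\ge c_2|x-v_i|^{1/2}$ on $\Delta$ near $v_i$; the resulting singularity $|x-v_i|^{-(n-1)/2}$ is integrable in $\mathbb{R}^{n-1}$ because $(n-1)/2<n-1$. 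Your tessellation fallback also fails for $n\ge 4$: the dihedral angle of the regular ideal $n$-simplex equals that of its Euclidean vertex link, namely $\arccos\bigl(1/(n-1)\bigr)$, which is a submultiple of $2\pi$ only for $n=2,3$. The orthoscheme/Schl\"afli route, or the corrected integral estimate above, closes this step.
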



Let $v_n$ be the maximal volume of a simplex in $\mathbb{H}^n$.
For instance, $v_2=\pi$ and $v_3= -3\int_0^{\pi/3}\log|2\sin u|\,du
= 1.0149416\ldots $
(\cite[p.~20]{Milnor}, \cite[p.~127]{AVS}).
Obviously,
 the facet areas $S_i$ of a compact simplex in
$\mathbb{H}^n$
are smaller than $v_{n-1}$.


\begin{lemma}\label{hs1}
  The area $S$ of a right triangle $\Delta ABC \subset \mathbb{H}^2$ 
   with angle $\angle ACB=\pi/2$ and side lengths $|AC|=b$ and
  $|BC|=a$ 
  fulfills the equation
$$
\tan S =\frac{\sinh a\cdot \sinh b }{\cosh a +\cosh b } \,.
$$
\end{lemma}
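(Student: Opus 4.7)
The plan is a two-step reduction: first translate the area $S$ into the sum of the non-right angles via Gauss--Bonnet, and then use the standard trigonometric formulas for a right triangle in $\mathbb{H}^2$ to express the resulting quantity in terms of the leg lengths $a$ and $b$.

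First, by the Gauss--Bonnet theorem for $\mathbb{H}^2$, the area of a triangle with angles $\alpha $ at $A$, $\beta $ at $B$, and $\pi /2$ at $C$ equals the angle defect:
\begin{equation*}
S=\pi -\alpha -\beta -\pi /2= \pi /2-(\alpha +\beta ).
\end{equation*}
Consequently,
\begin{equation*}
\tan S=\tan \bigl( \pi /2-(\alpha +\beta )\bigr) =\cot (\alpha +\beta ).
\end{equation*}

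Second, I invoke the standard right-triangle relations in $\mathbb{H}^2$ (see, e.g., \cite[\S 29]{Pe} or \cite[Ch.~2]{Buser}). Writing the legs as $|BC|=a$ and $|AC|=b$, these give
\begin{equation*}
\tan \alpha =\frac{\tanh a}{\sinh b},\qquad \tan \beta =\frac{\tanh b}{\sinh a}.
\end{equation*}
From here it is a routine computation: expand $\cot (\alpha +\beta )$ via the addition formula for $\tan $, use $\tanh x=\sinh x/\cosh x$ and $\sinh ^2x=\cosh ^2x-1$, and factor the common factor $\cosh a\cosh b-1$ from numerator and denominator. The key simplification is
\begin{equation*}
\sinh ^2a\cosh b+\sinh ^2b\cosh a=(\cosh a\cosh b-1)(\cosh a+\cosh b),
\end{equation*}
which follows at once from $\sinh ^2x=\cosh ^2x-1$. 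After cancellation one obtains
\begin{equation*}
\cot (\alpha +\beta )=\frac{\sinh a\cdot \sinh b}{\cosh a+\cosh b},
\end{equation*}
and combining this with the first step yields the claimed identity.

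There is no real obstacle here: the identity is a routine consequence of Gauss--Bonnet and the right-triangle formulas, and the algebraic simplification hinges on the single factoring identity above. The only point requiring a tiny bit of care is checking that $S<\pi /2$ so that $\tan S$ is defined and equals $\cot (\alpha +\beta )$ in the claimed sense; this is automatic since $\alpha ,\beta >0$.
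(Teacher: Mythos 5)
Your proof is correct and follows essentially the same route as the paper's: the paper's (very terse) proof likewise uses $S=\pi/2-\alpha-\beta$ together with the right-triangle relations $\tan\alpha=(\tanh a)/\sinh b$, $\tan\beta=(\tanh b)/\sinh a$ and $\tanh x=(\sinh x)/\cosh x$, leaving exactly the algebraic simplification you carried out (including the factoring of $\cosh a\cosh b-1$) as "routine."
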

\begin{proof}
This is a routine consequence of the trigonometric formulas for a right
triangle in~$\mathbb{H}^2$. We use $S= \pi /2 -\alpha - \beta $,
$\tan\angle CBA=(\tanh b) /\sinh a $,
$\tan\angle CAB=(\tanh a) /\sinh b $
 \cite[p.~238]{Cox}, and $\tanh x = (\sinh x) /\cosh x $.
\end{proof}


\begin{lemma}\label{hs4}
Let $d>0$.
Assume that $\Delta ABC \subset \mathbb{H}^2$ is a triangle such that
$|AB|\leq d$ and $|AC|\leq d$.
Then the area $S$ of this triangle is bounded by the inequality
$$
S \leq 2\arctan \frac{\cosh d -1}{2\sqrt{\cosh d }} \,.
$$
\end{lemma}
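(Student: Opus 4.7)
The plan is to reduce the bound to a statement about right triangles by dropping an altitude. Let $F$ be the foot of the perpendicular from $A$ onto the line through $B$ and $C$. If $F\in[B,C]$, then $\Delta ABC$ is the union of the right triangles $\Delta AFB$ and $\Delta AFC$, whose right angles are at $F$ and whose hypotenuses are $|AB|\le d$ and $|AC|\le d$ respectively, so by additivity $S=S_1+S_2$ with each $S_i$ the area of such a right triangle. If $F$ lies outside $[B,C]$, then $\Delta ABC$ is the difference of two right triangles at $F$, each with hypotenuse at most $d$, and hence $S$ is bounded by the area of the larger one alone. In both cases $S$ is at most twice the area of a right triangle in $\mathbb{H}^2$ whose hypotenuse is at most $d$.

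It therefore suffices to prove the sublemma: the area $S'$ of any right triangle in $\mathbb{H}^2$ with hypotenuse of length at most $d$ satisfies $S'\le\arctan\frac{\cosh d-1}{2\sqrt{\cosh d}}$. For this, let $a,b$ be the legs. By Lemma \ref{hs1}, $\tan S'=\frac{\sinh a\sinh b}{\cosh a+\cosh b}$, and by the hyperbolic Pythagorean theorem the hypotenuse $c$ satisfies $\cosh c=\cosh a\cosh b$, so the constraint translates to $\cosh a\cosh b\le\cosh d$.

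Setting $u=\cosh a$, $v=\cosh b$ (so $u,v\ge 1$, $uv\le \cosh d$) and using $\sinh^{2}=\cosh^{2}-1$, a direct calculation gives
\[
\left(\frac{\sinh a\sinh b}{\cosh a+\cosh b}\right)^{2}=\frac{(uv)^{2}+1-(u^{2}+v^{2})}{u^{2}+v^{2}+2uv}.
\]
Differentiating in $s:=u^{2}+v^{2}$ with $C:=uv$ held fixed, this quantity is strictly decreasing in $s$; by AM-GM, $s\ge 2uv=2C$ with equality iff $u=v$. Substituting $s=2C$ yields $\frac{(C-1)^{2}}{4C}$, which in turn is increasing in $C\ge 1$, so its maximum over $C\le\cosh d$ is attained at $C=\cosh d$ with $a=b=\operatorname{arcosh}\sqrt{\cosh d}$ and equals $\Bigl(\frac{\cosh d-1}{2\sqrt{\cosh d}}\Bigr)^{2}$. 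Since $\arctan$ is increasing and $0<S'<\pi/2$, the sublemma follows, and summing over the two (or one) right sub-triangles gives the claimed bound.

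The main obstacle is the two-variable optimization of $\frac{\sinh a\sinh b}{\cosh a+\cosh b}$ under the constraint $\cosh a\cosh b\le\cosh d$; it is elementary but requires rewriting the expression in terms of $u,v$ and combining monotonicity in $u^{2}+v^{2}$ with AM-GM. Everything else — the perpendicular-foot case split, the Pythagorean reduction of the hypotenuse constraint, and the monotonicity of $\arctan$ — is routine.
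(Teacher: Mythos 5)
Your proof is correct and follows essentially the same route as the paper's: drop the altitude from $A$, apply Lemma~\ref{hs1} together with the hyperbolic Pythagorean relation $\cosh c=\cosh a\cosh b$ to each right sub-triangle, and maximize $(u^2-1)(v^2-1)/(u+v)^2$ under the resulting constraint on $uv$. The only organizational difference is that the paper first reduces to the isosceles case $|AB|=|AC|=d$ (by a tacit monotonicity argument) and then optimizes under the equality constraint $uv=\cosh d$, whereas you bound the two right sub-triangles independently under the inequality constraint $uv\le\cosh d$, which also lets you treat explicitly the case where the foot of the altitude falls outside $[B,C]$ --- a case the paper's normalization silently avoids.
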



\begin{proof}
Without loss of generality we may assume that $|AB|=|AC|=d$.
Let $H$ be the orthogonal projection of $A$ to the line $\ell (B,C)$.
The segment $AH$ cuts the triangle $ABC$ into two congruent right triangles.
With $x=\cosh |AH|$ and
$y=\cosh |BH|=\cosh |CH|$, we have $x,y\geq 1$
and $xy=\cosh d $.
Let $S$ be the area of $\Delta ABC$. Lemma~\ref{hs1} gives
$$
\tan^2(S/2)=\frac{(x^2-1)(y^2-1)}{(x+y)^2} \,.
$$
Looking for the maximum of the numerator and the minimum of the
denominator
subject to the constraints $x,y > 0$ and $xy=\cosh d $,
we see that the maximal value of $S$ is attained for
$x=y=\sqrt{\cosh d }$. This proves the lemma.
\end{proof}


To show that a tetrahedron with given facet areas exists, we will
use a topological argument, which is encapsulated in
the following lemma. The lemma guarantees the existence of a zero of a
function under certain conditions on the boundary.
\begin{lemma}\label{index}
Let $F=(f_1,f_2)\colon P\rightarrow \mathbb{R}^2$ be a continuous function
defined on a rectangular domain $P=[0,a]\times [0,b]$, where $a,b>0$.
 Assume that there are
$u_1,u_2,v_1,v_2\in \mathbb{R}$ such that
$u_1>u_2$ and $v_1<v_2$ and
\begin{gather*}
f_1(x,0)+f_2(x,0)\leq 0,  \quad
f_1(x,b)+f_2(x,b)\geq 0, {\text{\,\,\,\,and}} \\
u_1f_1(0,y)+u_2f_2(0,y)\leq 0,  \quad
v_1f_1(a,y)+v_2f_2(a,y)\leq 0
\end{gather*}
for every $0\leq x\leq a$ and every $0\leq y \leq b$. Then there exists a point
$(c,d)\in P$ such that $F(c,d)=(0,0)$.
\end{lemma}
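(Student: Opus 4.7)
The plan is to reduce the search for a simultaneous zero of $F = (f_1, f_2)$ to a one-dimensional intermediate value argument along a level curve. First, I introduce the auxiliary function $g(x, y) := f_1(x, y) + f_2(x, y)$. The first two boundary hypotheses translate immediately into $g(x, 0) \le 0$ and $g(x, b) \ge 0$ for every $x \in [0, a]$, so $g$ changes sign between the bottom and the top edge of $P$.

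The central topological ingredient, which I expect to be the main obstacle, is the following \emph{box-crossing} claim: the zero set $Z := g^{-1}(0)$ contains a connected compact subset $C$ that meets both vertical edges $\{0\}\times [0, b]$ and $\{a\}\times [0, b]$. This is a standard fact about continuous scalar functions on a rectangle with opposite-sign prescribed data on opposite sides, and I would justify it by contradiction using a Kuratowski-type separation argument: if no component of $Z$ met both vertical edges, one could split $Z = Z_L \sqcup Z_R$ into two disjoint closed pieces containing $Z \cap \{x = 0\}$ and $Z \cap \{x = a\}$ respectively, then use Urysohn's lemma together with $g$ itself to construct a continuous path from the bottom edge to the top edge inside $P \setminus Z$, which would contradict the intermediate value theorem applied to $g$ along that path. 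Note that a direct appeal to the Poincar\'e--Miranda theorem seems to fail here, because the left- and right-edge hypotheses are prescribed in terms of two different linear functionals $(u_1, u_2)$ and $(v_1, v_2)$ rather than a single one with opposite signs, so the topological input really is needed.

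Granted such a $C$, the rest of the proof is elementary. On $C$ we have $f_2 = -f_1$ by construction, so the left-edge hypothesis specialises on $C \cap \{x = 0\}$ to $(u_1 - u_2) f_1 \le 0$, which forces $f_1 \le 0$ there because $u_1 > u_2$. Analogously, on $C \cap \{x = a\}$ the right-edge hypothesis becomes $(v_1 - v_2) f_1 \le 0$, forcing $f_1 \ge 0$ there because $v_1 < v_2$. Since $C$ is connected and $f_1$ is continuous, the image $f_1(C) \subset \mathbb{R}$ is an interval containing both a non-positive and a non-negative value; hence it contains $0$. Any preimage $(c, d) \in C$ of $0$ under $f_1$ then satisfies $f_1(c, d) = 0$ and $f_2(c, d) = -f_1(c, d) = 0$, so $F(c, d) = (0, 0)$, as required.
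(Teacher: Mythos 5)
Your reduction is correct, and it takes a genuinely different route from the paper. The paper argues directly with the winding number: assuming $F\ne(0,0)$ on $\partial P$, it exhibits an explicit comparison field $F_0\colon\partial P\to{\mathbb S}^1$ (constant $(-1,1)/\sqrt2$ on the left edge, $(1,-1)/\sqrt2$ on the right edge, rotating through the half-planes $\xi+\eta\le 0$ and $\xi+\eta\ge 0$ along the bottom and top), checks from the four boundary hypotheses that $F$ and $F_0$ are never antipodal, and concludes that $F$ has index $1$ on $\partial P$ and hence a zero inside. You instead push all the topology into the box-crossing lemma for $g=f_1+f_2$, obtain a continuum $C\subset g^{-1}(0)$ joining the two vertical edges, and finish with a one-dimensional intermediate value argument for $f_1|_C$; the sign computations $(u_1-u_2)f_1\le 0$ on the left and $(v_1-v_2)f_1\le 0$ on the right are exactly where the hypotheses $u_1>u_2$ and $v_1<v_2$ enter, and they are used correctly. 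Your observation that Poincar\'e--Miranda does not apply verbatim (two different functionals on the two vertical edges) is also accurate, and is precisely why the paper resorts to a bespoke index computation. What your approach buys is a very short and transparent endgame; what it costs is that the entire topological content is concentrated in the crossing lemma.

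That lemma is true (a closed set separating the bottom edge from the top edge of a rectangle contains a continuum joining the left edge to the right edge; this follows from unicoherence of the square, or see Newman or Kuratowski), so there is no error. But be aware that your sketch of its proof is incomplete at the decisive step: the Kuratowski/quasi-component argument giving the clopen splitting $Z=Z_L\sqcup Z_R$ is standard, whereas the claim that one can then ``use Urysohn's lemma together with $g$'' to produce a bottom-to-top path in $P\setminus Z$ is exactly the two-dimensional Brouwer/Hex-type input, of the same depth as the degree argument the paper uses, and it does not fall out of Urysohn's lemma alone. (Also note the minor cases you would need to dispatch: $g$ does vanish somewhere on each vertical edge by the IVT applied to that edge, so $Z$ meets both of them; and if $g\equiv 0$ on the bottom or top edge, that edge itself is the desired continuum.) If you cite the crossing lemma as a known theorem rather than proving it, your argument is a complete and valid alternative proof.
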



\begin{proof}
  The conclusion clearly holds if $F$ vanishes at some point of the
  boundary $\partial P$ of $P$. If $F$ has no zero on $\partial P$,
  then it is sufficient to establish that the index of the vector field $F$
  on the curve $\partial P$ is $1$. This implies that $F$ has a zero
  in the interior of $P$ 
  \cite[p.~98, proof of Theorem VI.12, sufficiency]{HW}.

To determine the index of $F$, we define the 
auxiliary function 
$F_0\colon \partial P \to {\mathbb{S}}^1$ as follows. 
On the vertical boundaries of $P$, we let
 $F_0 (0,y) = A_L := (-1/ \sqrt{2}, 1/ \sqrt{2}) $ 
and
 $F_0 (a,y) = A_R := -A_L = (1/ \sqrt{2}, -1/ \sqrt{2}) $ 
for  $0\leq y \leq b$.
On the lower boundary,
$F_0(x,0)=(\xi,\eta)$ turns counterclockwise in the
 half-plane $\xi+\eta \le 0$
with constant angular velocity from
$A_L$ to $A_R$ as $x$ varies from $0$ to $b$.
 The upper boundary is similar, but there
$F_0(x,b)$ changes
clockwise in the
 half-plane $\xi+\eta \ge 0$.
Then it follows from the assumptions that, 
for $(x,y) \in \partial P$, $F(x,y)$ and
$F_0(x,y)$ never point to opposite directions.
 Hence, $F(x,y)/\| F(x,y) \| ,
F_0(x,y) \colon \partial P \to {\mathbb{S}}^1$ are homotopic. Therefore,
the index of
$F$ equals the index of $F_0$, namely~$1$.
\end{proof}


We still need two lemmas that together form a sharpening of
two lemmas from~\cite{BKM}.

\begin{lemma}[{\cite[Lemmas 1 and 2]{BKM}}]\label{BKM1}
Consider a \textup(possibly degenerate\textup) triangle $A$ in 
${\mathbb{S}}^2$, ${\mathbb{R}}^2$ or
${\mathbb{H}}^2$ with sides $a,b,x$, where $a,b>0$. For the case
of $\mathbb{S}^2$,
we additionally assume $a+b \le \pi $. Then, for $a,b$ fixed
and $|a-b| \le x \le a+b$, the area $\overline A$ of this triangle
is a concave function of
$x$. \textup(For $x=a+b=\pi$ on $\mathbb{S}^2$, we define $\overline A$ 
by a limit 
procedure: namely, fixing $a,b$, we let $x \to a+b= \pi $. Accordingly, we
set $\overline A = \pi $. Observe that for $a+b= \pi$, the area $\overline{A}$
is half the area of a digon with sides containing 
the sides $a,b$ of $A$.\textup) 
In addition, the area is strictly concave for
${\mathbb{R}}^2$ and ${\mathbb{H}}^2$ and, under the 
additional constraint $a+b < \pi $, also for
${\mathbb{S}}^2$.
\qed 
\end{lemma}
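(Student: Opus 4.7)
The plan is to verify $\overline A''(x)\le 0$ on $[|a-b|,a+b]$ by a direct second-derivative computation in each of the three geometries, and to handle the spherical boundary case $a+b=\pi$ by the prescribed limiting convention.

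For ${\mathbb{R}}^2$, I would apply Heron's formula $16\overline A^2=\bigl((a+b)^2-x^2\bigr)\bigl(x^2-(a-b)^2\bigr)$. Setting $f=\overline A^2>0$, the inequality $\overline A''\le 0$ is equivalent to $2ff''-(f')^2\le 0$. Substituting $u=x^2$, $p=(a+b)^2$, $q=(a-b)^2$, the numerator of $2ff''-(f')^2$ becomes (up to a positive constant) the cubic
$$
g(u)=2u^3-3(p+q)u^2+6pq\,u-pq(p+q).
$$
One computes $g'(u)=6(u-p)(u-q)\le 0$ on $[q,p]$ and $g(q)=-q(p-q)^2\le 0$; hence $g(u)\le 0$ throughout, with strict inequality for $u>q$. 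This yields strict concavity of $\overline A$ in the interior.

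For ${\mathbb{H}}^2$ and ${\mathbb{S}}^2$, I would parameterize by the angle $\gamma$ opposite $x$. The respective laws of cosines, $\cosh x=\cosh a\cosh b-\sinh a\sinh b\cos\gamma$ and $\cos x=\cos a\cos b+\sin a\sin b\cos\gamma$, express $x$ as a smooth increasing function of $\gamma$, and Gauss--Bonnet together with the remaining two laws of cosines express $\overline A=\pi-\alpha-\beta-\gamma$ (hyperbolic) or $\overline A=\alpha+\beta+\gamma-\pi$ (spherical) as a smooth function of $\gamma$. Then $d\overline A/dx=(d\overline A/d\gamma)/(dx/d\gamma)$, and a further differentiation produces $d^2\overline A/dx^2$ in closed form. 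A convenient way to organize the resulting algebra is through the L'Huilier identities
$$
\tan^2(\overline A/4)=T(s)\,T(s-a)\,T(s-b)\,T(s-x),\qquad s=(a+b+x)/2,
$$
with $T(t)=\tanh(t/2)$ in the hyperbolic case and $T(t)=\tan(t/2)$ in the spherical case; these play the role of Heron's formula, so that the manipulations parallel the Euclidean cubic argument with $\tanh$ or $\tan$ factors replacing quadratic ones, and polynomial non-positivity is verified on the appropriate ranges of the half-side tangents.

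The main technical obstacle is the spherical endpoint $x=a+b=\pi$, where one factor in the L'Huilier product vanishes and the triangle degenerates, forcing the limit convention $\overline A=\pi$ stated in the lemma (half the area of the limiting digon). One must verify that this value matches the one-sided limit of the interior formula for $\overline A(x)$ and that concavity extends continuously across the endpoint. The same boundary phenomenon explains why strict concavity is lost when $a+b=\pi$: already in the case $a=b=\pi/2$ a direct computation yields $\overline A(x)=x$ identically, so the extra hypothesis $a+b<\pi$ is essential in the spherical strict-concavity statement. Interior strict concavity in the other regimes then follows from the strict positivity of the leading term in each second-derivative calculation, exactly as in the Euclidean case.
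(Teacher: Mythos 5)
The paper offers no proof of this lemma at all --- it is quoted from \cite[Lemmas 1 and 2]{BKM} with the end-of-proof symbol placed inside the statement --- so there is no in-paper argument to measure yours against; I can only assess the proposal on its own. Your Euclidean computation is complete and correct: on the open interval where $f=\overline A^2>0$ the sign of $\overline A''$ is that of $2ff''-(f')^2$, the cubic $g(u)=2u^3-3(p+q)u^2+6pq\,u-pq(p+q)$ is indeed the resulting numerator, and $g'(u)=6(u-p)(u-q)\le 0$ together with $g(q)=-q(p-q)^2\le 0$ gives strict negativity for $u\in(q,p)$; concavity then extends to the closed interval by continuity. The observation that $a=b=\pi/2$ gives $\overline A(x)=x$ on ${\mathbb{S}}^2$ is also correct and does explain the loss of strictness there.

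The curved cases, however, are where the lemma lives, and there your argument has a gap that is not merely one of missing detail: the proposed organizing device provably cannot deliver the required sign. Writing $f=\tan^2(\overline A/4)$ for the L'Huilier product, one has $\overline A=4\arctan\sqrt f$ with $\arctan$ increasing and concave on $[0,\infty)$, so the literal parallel to your Euclidean step would be to show that $\sqrt f=\tan(\overline A/4)$ is concave in $x$ (equivalently $2ff''-(f')^2\le 0$), which would indeed imply concavity of $\overline A$. But that stronger statement is false: in your own example $a=b=\pi/2$ the product collapses to $f=\tan^2(x/4)$ (since $s/2=\pi/4+x/4$, $(s-x)/2=\pi/4-x/4$ and $\tan(\pi/4+x/4)\tan(\pi/4-x/4)=1$), and $\tan(x/4)$ is strictly convex on $(0,\pi)$; by continuity the same failure persists for nearby $a,b$ with $a+b<\pi$, i.e.\ well inside the lemma's hypotheses. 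So ``the manipulations parallel the Euclidean cubic argument'' cannot be made literal, and no alternative second-derivative identity is actually produced: the $\gamma$-parameterization is announced but never carried out, the $x$-derivatives of the $T$-factors (which introduce $\sec^2$ and $\mathrm{sech}^2$ terms and admit no analogue of the substitution $u=x^2$ that made the Euclidean numerator a cubic) are never confronted, and at the spherical endpoint $x\to a+b=\pi$ one has $\tan(s/2)\to\infty$ against the vanishing factor $T(s-x)$, an indeterminate product your endpoint discussion does not resolve. As written, the hyperbolic and spherical assertions of the lemma remain unproved.
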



We calculate more precise details about this concave function and the
value of its maximum.

\begin{lemma}\label{BKM2}
We use the notations and hypotheses
of Lemma~\ref{BKM1} and denote by $\gamma $ the angle 
between 
the
sides $a,b$. For ${\mathbb{S}}^2$, let us additionally assume $a+b < \pi $.
Then $\overline A$ equals $0$ for $x=|a-b|$ and $x=a+b$, and it has a unique
maximum for some value $x=x_{\max}$, with corresponding angle 
$\gamma = \gamma _{\max}$.

For ${\mathbb{H}}^2$, we have
\begin{gather*}
\cosh (x_{\max}/2)=\sqrt{(\cosh a + \cosh b)/2},
\\
\cos \gamma _{\max} =  \tanh (a/2) \cdot \tanh (b/2), 
\end{gather*}
and the
value of the maximal area is
\begin{align*}
&\pi - 2 \arcsin \frac{  \sinh  (a/2) }{\sinh r} -
2 \arccos \frac{ \tanh ( a/2)}{ \tanh r} + {}\\
&\pi - 2 \arcsin \frac{  \sinh ( b/2) }{\sinh r} -
2 \arccos \frac{ \tanh  (b/2)}{ \tanh r} ,
\end{align*}
where $\cosh  r = \sqrt{(\cosh a +\cosh b)/2}$.

 For\, ${\mathbb{R}}^2$, 
we have
\[
x_{\max}^2=a^2+b^2, \ 
 \gamma _{\max} = \pi/2, 
\]
and the maximal area is
$ab/2$.

For \,${\mathbb{S}}^2$, we have
\begin{gather*}
  \cos (x_{\max}/2)=\sqrt{(\cos a + \cos b)/2},
\\
\cos \gamma _{\max} = - \tan (a/2) \cdot \tan (b/2),
\end{gather*}
and the maximal area is
\begin{align*}
&
2 \arcsin \frac{  \sin (a/2)}{ \sin r } +
2 \arccos \frac{  \tan (a/2)}{ \tan r } 
- \pi + {}\\
{}{} &
2 \arcsin \frac{  \sin (b/2)}{ \sin r } +
2 \arccos \frac{  \tan (b/2)}{ \tan r } - \pi ,
\end{align*}
where $\cos  r = \sqrt{(\cos a +\cos b)/2}$.

Moreover,
letting $y/2$ be
the distance between the midpoint of the side $x$ and the common vertex of the 
sides $a$ and $b$, we have the following equivalences:
$$ 
\gamma \in [0, \gamma _{\max}) \Longleftrightarrow x < y, {\text{ and }}
\gamma = \gamma _{\max} \Longleftrightarrow x = y, {\text{ and }}
\gamma \in (\gamma _{\max}, \pi ] \Longleftrightarrow x > y.
$$
\end{lemma}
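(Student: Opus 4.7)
The proof has four natural steps.

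\emph{Step 1 (Median identity).} Let $M$ be the midpoint of the side of length $x$, so that by definition $|CM| = y/2$. Applying the hyperbolic law of cosines to each of the two sub-triangles $AMC$ and $BMC$ and adding the two identities --- the supplementary angles at $M$ produce cancelling cosine terms --- yields
\[
2\cosh(x/2)\cosh(y/2) = \cosh a + \cosh b,
\]
and analogously $y^2 = 2a^2 + 2b^2 - x^2$ in $\mathbb{R}^2$ and (under the standing hypothesis $a+b<\pi$) $2\cos(x/2)\cos(y/2) = \cos a + \cos b$ in $\mathbb{S}^2$. In particular, in each geometry $y$ is a strictly decreasing continuous function of $x$ on the admissible interval $[|a-b|,a+b]$.

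\emph{Step 2 (The first-order condition reduces to $\alpha+\beta=\gamma$).} Lemma~\ref{BKM1} gives strict concavity of $\overline A$ with boundary values zero, so there is a unique $x_{\max}$ in the interior characterised by $\overline A'(x_{\max})=0$. In $\mathbb{H}^2$, writing $\overline A = \pi - \alpha - \beta - \gamma$ (Gauss--Bonnet), expressing each angle as a function of $x$ by the law of cosines, and using the law of sines $\sinh a/\sin\alpha = \sinh b/\sin\beta = \sinh x/\sin\gamma$ together with $\cot\alpha + \cot\beta = \sin(\alpha+\beta)/(\sin\alpha\sin\beta)$, a direct differentiation collapses to
\[
\overline A'(x) = \frac{\sin(\alpha+\beta) - \sin\gamma}{\sinh x\,\sin\alpha\,\sin\beta}.
\]
Since $\alpha+\beta < \pi-\gamma$ in $\mathbb{H}^2$, the vanishing of this derivative forces $\alpha+\beta = \gamma$. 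The Euclidean case is immediate from $\overline A = \tfrac12 ab\sin\gamma$: the condition $d\overline A/d\gamma=0$ gives $\gamma_{\max}=\pi/2$, which in view of $\alpha+\beta+\gamma=\pi$ is the same condition. The spherical case goes through the same computation applied to $\overline A = \alpha+\beta+\gamma-\pi$, and (since $\alpha+\beta>\pi-\gamma$) again forces $\alpha+\beta = \gamma$.

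\emph{Step 3 (Explicit formulas).} The condition $\alpha+\beta=\gamma$ has a transparent geometric meaning. Drawing from $C$ into the triangle the ray making angle $\alpha$ with $CB$ produces a point $M'\in AB$ such that triangle $AM'C$ has two equal angles $\alpha$, hence $|M'A|=|M'C|$; symmetrically $|M'B|=|M'C|$, so $M'$ is the midpoint of $AB$ and $|CM|=x/2$, i.e., $x=y$ at $x_{\max}$. Substituting into the median identity of Step~1 gives $\cosh^2(x_{\max}/2) = (\cosh a+\cosh b)/2$, and the law of cosines then produces
\[
\cos\gamma_{\max} \;=\; \frac{(\cosh a-1)(\cosh b-1)}{\sinh a\,\sinh b} \;=\; \tanh(a/2)\tanh(b/2),
\]
using $\cosh t-1 = 2\sinh^2(t/2)$ and $\sinh t = 2\sinh(t/2)\cosh(t/2)$; the Euclidean and spherical formulas for $x_{\max}$ and $\gamma_{\max}$ drop out of the corresponding substitutions. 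For the maximal area itself, at $x_{\max}$ the segment $CM$ decomposes the triangle into two isosceles triangles with apex $M$ and common leg $r:=y/2$; dropping the perpendicular from $M$ to the base in each isosceles piece produces congruent right triangles whose half-apex and base angles are $\arcsin(\sinh(a/2)/\sinh r)$, $\arccos(\tanh(a/2)/\tanh r)$, and analogously for $b$, by the standard hyperbolic right-triangle formulas \cite[p.~238]{Cox}. Summing the two defects yields exactly the expression for the maximal area claimed in the lemma; the procedure in $\mathbb{S}^2$ is identical, and in $\mathbb{R}^2$ the area is simply $ab/2$.

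\emph{Step 4 (The three equivalences).} By Step~1, $y$ is strictly decreasing in $x$, while by the law of cosines $\gamma$ is strictly increasing in $x$; so $x - y$ is a strictly increasing continuous function of $\gamma$ on its range, vanishing uniquely at $\gamma_{\max}$, which gives the three claimed equivalences. The main technical obstacle lies in Step~2: bringing the derivative of $\overline A$ into the clean form $\sin(\alpha+\beta) = \sin\gamma$ requires careful bookkeeping with the laws of sines and cosines, and this computation must be carried out separately in each of the three geometries.
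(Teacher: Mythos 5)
Your proof is correct, but the decisive step --- locating $x_{\max}$ --- runs along a genuinely different track from the paper's. The paper reflects $C$ through the midpoint of the side $x$ to obtain a centrally symmetric quadrilateral with sides $a,b,a,b$, diagonals $x$ and $y$, and area $2\overline A$, and then invokes the isoperimetric property of the circle (in the classical form: among polygons with prescribed side lengths, the one inscribed in a circle has maximal area, valid in ${\mathbb{H}}^2$ and, within an open hemisphere, in ${\mathbb{S}}^2$) to conclude that the maximum occurs exactly at the cyclic configuration, i.e.\ when $x=y$. You instead differentiate the angle defect/excess directly; I verified your identity $\overline A'(x)=\bigl(\sin(\alpha+\beta)-\sin\gamma\bigr)/\bigl(\sin\alpha\,\sin\beta\,\sinh x\bigr)$ via $\alpha'(x)=-\cot\beta/\sinh x$, $\beta'(x)=-\cot\alpha/\sinh x$, $\gamma'(x)=\sin\gamma/(\sin\alpha\sin\beta\sinh x)$, and it is right (on ${\mathbb{S}}^2$ the overall sign flips, which does not affect the critical-point equation). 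Your isosceles decomposition then converts $\alpha+\beta=\gamma$ back into $x=y$, after which everything --- the median (``parallelogram'') identity $2\cosh(x/2)\cosh(y/2)=\cosh a+\cosh b$, the evaluation of $\cos\gamma_{\max}$, the right-triangle formulas for the maximal area, and the monotonicity argument for the three equivalences --- coincides with the paper. What your route buys is self-containedness: no appeal to the isoperimetric property of the circle in ${\mathbb{H}}^2$ and ${\mathbb{S}}^2$ (with its open-hemisphere caveat) nor to the cyclic-polygon maximality theorem; the price is the trigonometric differentiation, carried out separately in each geometry. Two trifles, neither affecting correctness: in Step~3 the ray from $C$ should make angle $\alpha$ with $CA$ (equivalently $\beta$ with $CB$), not $\alpha$ with $CB$, for triangle $AM'C$ to be isosceles with base angles $\alpha$; and in the spherical case of Step~2 one should remark that $\sin(\alpha+\beta)=\sin\gamma>0$ already forces $\alpha+\beta<\pi$, so that the dichotomy $\alpha+\beta\in\{\gamma,\pi-\gamma\}$ is legitimate before the excess inequality eliminates $\pi-\gamma$.
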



\begin{proof}
For ${\mathbb{R}}^2$, the statement is elementary. Therefore, we investigate
only the cases of ${\mathbb{H}}^2$ and ${\mathbb{S}}^2$.

Denote the vertices of the triangle opposite to the sides
$a,b$ and $x$, by $A,B$
and $C$, respectively. Let $D$ be the mirror image of $C$ with respect to the
midpoint of the side $x$. Then the quadrilateral $ABCD$ is centrally symmetric
with respect to the intersection $O$ of its diagonals $BC$ (of length $x$)
and $AD$ (of length $y$). Its area is $2{\overline A}$, so it suffices to
investigate its area.

We recall the isoperimetric property of the circle in
${\mathbb{R}}^2$, ${\mathbb{H}}^2$, and on ${\mathbb{S}}^2$
 --- but in the last case of radius $r < (a+b)/2 < \pi /2$ --- 
among sets of equal
perimeter. Namely, that the maximum area is attained for the circle.
For ${\mathbb{S}}^2$, one must restrict the candidate to (closed)
sets contained in 
some open
half-${\mathbb{S}}^2$
\cite[Ch.~18, \S 6, {\bf{2}}, (18.39)]{San}.  Observe that a piecewise
$C^1$ closed curve on ${\mathbb{S}}^2$ with length less than $2 \pi $
lies in some open half-${\mathbb{S}}^2$, by elementary
integral-geometric considerations \cite[Ch. 7, \S 2, (7.11) and
Ch. 18, \S 6, {\bf{1}}, (18.37)]{San}.  (A very detailed exposition of
the isoperimetric inequality in spaces of constant curvature, i.e., in
${\mathbb{R}}^n$, ${\mathbb{H}}^n$, and ${\mathbb{S}}^n$, can be found
in~\cite{Schm1}. See~\cite{Schm2} for further details.)

For $\gamma = 0$, we have $x=|a-b| < a+b=y$, while
for $\gamma = \pi $, we have $x=a+b > |a-b|=y$.
Therefore, for some $\gamma \in (0, \pi )$, we have $x=y$. This implies that,
for this $\gamma $, i.e., for this $x$, 
$ABCD$ is inscribed in a circle of
centre $O$ and radius $r:=x/2=y/2$. By the isoperimetric
property of the circle --- on ${\mathbb{S}}^2$ of radius 
$r < (a+b)/2 < \pi /2$,
in the sense described above ---
 this value of $ \gamma $ must therefore be $ \gamma _{\max}$, and
this $x$ is $x_{\max}$, see
\cite[p.~63, Problem 21]{Ka}, \cite[\S 5, Problem 63]{JB}, 
\cite[p.~52]{Kr}. (These references deal with the case of ${\mathbb{R}}^2$.
However,
their well-known proof carries over to ${\mathbb{H}}^2$ and ${\mathbb{S}}^2$
if we use the isoperimetric property of the circle --- 
on ${\mathbb{S}}^2$ of radius $r < (a+b)/2 < \pi /2$,
in the sense described above.)

We determine the
radius of this circle. We use the law of cosines for the triangles
$\Delta AOC,\Delta BOC$ and write $\varphi := \angle BOC$.
 For
${\mathbb{H}}^2$, we have
$$
\cosh a  = \cosh (x/2) \cdot \cosh (y/2) - \sinh (x/2) \cdot \sinh (y/2) \cdot
\cos \varphi ,
$$
and
$$
\cosh b  = \cosh (x/2) \cdot \cosh (y/2) + \sinh (x/2) \cdot \sinh (y/2) \cdot
\cos \varphi .
$$
Adding these, we obtain
\begin{equation}\label{PARLAW1}
\cosh a + \cosh b = 2 \cosh (x/2) \cdot \cosh (y/2) .
\end{equation}
Analogously, for ${\mathbb{S}}^2$, we obtain
\begin{equation}\nonumber
\cos a  + \cos b  = 2 \cos (x/2) \cdot \cos (y/2) .
\end{equation}
(These are the analogues of the parallelogram law in ${\mathbb{R}}^2$.)
Thus, for ${\mathbb{H}}^2$, we have
\begin{equation}\nonumber
\cosh a  + \cosh b  = 2 \cosh^2 r = 2 \cosh ^2 (x_{\max}/2) ,
\end{equation}
and, for ${\mathbb{S}}^2$, we have
\begin{equation}\nonumber
\cos a  + \cos b  = 2 \cos ^2 r = 2 \cos ^2 (x_{\max}/2) 
\end{equation}
in the range $0<r < (a+b)/2 < \pi /2$.

Furthermore, for ${\mathbb{H}}^2$, 
$x$ is a strictly increasing function of $\gamma $
and, by (\ref{PARLAW1}), $y$ is a strictly 
decreasing function of
$x$. Hence, 
for $x=2r$ we have $y=2r$,
for $|a-b| \le x < 2r$ we have $2r < y \le a+b$,
and similarly, 
for $2r < x \le a+b$ we have $|a-b| \le y < 2r$. 
These imply the last equivalences in the lemma for ${\mathbb{H}}^2$.

Next we determine $\cos \gamma _{\max} $ for ${\mathbb{H}}^2$. The law
of cosines for the triangle $\Delta ABC$ gives
$$
\cos \gamma _{\max} = \frac{ \cosh a  \cdot \cosh b  - \cosh (2r)}
{\sinh a  \cdot \sinh b } .
$$
In this equation, we have
$\cosh (2r) = 2 \cosh ^2 r - 1 = \cosh a  + \cosh b  - 1 $, and this
implies the formula in the lemma. (Observe that $0 < a,b$ implies $\cos \gamma
_{\text{max}} = \tanh (a/2) \cdot \tanh (b/2) \in (0,1)$.)

For ${\mathbb{S}} ^2$, the proof of the
last equivalences in the lemma and the calculation of $\cos
\gamma _{\max}$
are analogous. (Observe that now $0 < a,b$ and $a/2+b/2 < \pi /2$ imply 
$\cos \gamma _{\text{max}} = 
- \tan (a/2) \cdot \tan (b/2) \in (-1,0) $.)

Finally, the value of the maximum follows from the trigonometric formulas for a
right triangle in ${\mathbb{H}} ^2$ and ${\mathbb{S}} ^2$.
\end{proof}


We prove Theorem~\ref{exhyp} with the following

{\bf{Construction 1.}}
Consider a number 
\begin{equation}\label{S}
S \in (0,\pi/2)
\end{equation} 
and a number $t>0$ such that
\begin{equation}\nonumber 
2{\text{sinh\,}}(t/2) > \tan S.
\end{equation}
(Later, $S$ will be the area of a compact right triangle, which
explains 
condition~(\ref{S}).
At the same time, this explains the hypothesis
$0 < S_4 < \pi /2$ of Theorem~\ref{exhyp},
since in the proof, $S$ will be chosen for example as $S_4$.)

Now we define a function
\begin{equation}\nonumber
f_{t,S}\colon [0,t]\rightarrow \mathbb{R}
\end{equation}
as follows. For any $x\in [0,t]$,
consider the function
$$
g_x(y):=\arctan \frac{\sinh x \cdot \sinh y }{\cosh x +\cosh y }+
\arctan \frac{\sinh(t-x) \cdot \sinh y }{\cosh(t-x)+\cosh y },
$$
where $y \in [0, \infty )$.
It is easy to see that $(d/dy)g_x(y)>0$
for $y\in [0,\infty)$, and
$g_x(0)=0$, and
\begin{align*}
\lim_{y\to \infty} g_x(y)
&=
\arctan(\sinh x)+\arctan(\sinh(t-x))
\\
&\geq
\arctan \left( \sinh x  + \sinh (t-x) \right) \ge \arctan(2\sinh(t/2))>S
\end{align*}
for all $x \in [0,t]$. Here, at the first inequality, we used concavity of the
function $\arctan y$ on $[0, \infty ) $ and $\arctan 0 =0$.
At the second inequality, we used convexity of the
function sinh\,$x$ on the interval $[0,t]$.
Therefore, there is a unique $\tilde{y} \in (0,\infty)$
such that $g_x(\tilde{y})=S$. We put
\begin{equation}\label{tilde y}
f_{t,S}(x):=\tilde{y} \in (0,\infty).
\end{equation}
Now we investigate some properties of this function.
Obviously, $f_{t,S}$ is continuous on $[0,t]$ (moreover, it is $C^1$  on
$(0,t)$), and
$f_{t,S}(x)=f_{t,S}(t-x)$.

Here is the geometric interpretation of $f_{t,S}$.
Consider a triangle $\Delta ABC \subset \mathbb{H}^2$ with the
following properties:
\begin{enumerate}
\item  $|AB|=t$,
\item  the area of $\Delta ABC$ is $S$,
\item if $H$ is the orthogonal projection of $C$ to the line $\ell
(A,B)$, then $H \in [A,B]$ and $|AH|=x \in [0,t]$.
\end{enumerate}
Then it is easy to see
(using
 Lemma~\ref{hs1}) that 
 $|CH|=f_{t,S}(x)$.
It is also easy to see that for $0 < \tilde{S}<S$
and for every $x\in [0,t]$, we have
$f_{t,\tilde{S}}(x)<f_{t,S}(x)$.

In what follows we determine
the number
\begin{equation}\label{htS}
h_{t,S}:=f_{t,S}(0)=f_{t,S}(t).
\end{equation}
By Lemma \ref{hs1}, we
have
$$
\tan S = \frac{\sinh t \cdot\sinh h_{t,S}}{\cosh t +\cosh h_{t,S}}.
$$
Solving this equation for $\cosh h_{t,S}$, we get
$$
\cosh h_{t,S}=\frac{\tan^2 S\cdot\cosh t +\sqrt{1+\tan^2 S}\cdot\sinh^2t}
{\sinh^2t-\tan^2S} \,.
$$
(Observe that $\sinh t > 2 \sinh (t/2) > \tan S$ by the strict convexity of the
function $\sinh t$ on $[0, \infty )$ and $\sinh 0 = 0$. Therefore, the
denominator in this formula is positive.)
From this, we see that $\cosh h_{t,S} \to 1/\cos S $ for $t \to \infty $.


\begin{proof}[Proof of Theorem~\ref{exhyp}]
{\bf{1.}}
First we consider the case when hypothesis~(\ref{fc1}) of Theorem~\ref{exhyp} 
holds.

Let us take a $t>0$ such that
\begin{equation}\label{tS_4}
2\sinh(t/2)> \tan S_4,
\end{equation}
and
for the number $h_{t,S_4}:=f_{t,S_4}(0)=f_{t,S_4}(t)$ 
defined in (\ref{tilde y}) and
(\ref{htS}),
we have
\begin{equation}\label{tdef}
\frac{\cosh h_{t,S_4}-1}{2\sqrt{\cosh h_{t,S_4}}} < \tan (S_1/2).
\end{equation}
Such a $t$ exists since $\cosh h_{t,S_4} \to 1/\cos S_4$ for $t \to
\infty$, and
$$
\lim\limits_{t\to \infty}\frac{\cosh h_{t,S_4}-1}{2\sqrt{\cosh h_{t,S_4}}}
=\frac{1-\cos S_4}{2\sqrt{\cos S_4}} <
\tan(S_1/2),
$$
by hypothesis (1) of the theorem.

Consider any plane $\sigma$ in $\mathbb{H}^3$. Take points $A_1,A_2 \in
\sigma$ such that $|A_1A_2|=t$, where $t$
has been chosen above. Let $\sigma^+$ and $\sigma^-$ be the two half-planes
bounded by the line $\ell (A_1,A_2)$ in $\sigma $.

For a given $x\in [0,t]$, we take the
point $H=H(x)$ on the segment $[A_1,A_2]$
satisfying $|A_1H|=x$. Consider the half-line
$l_x$ from $H$ in $\sigma^+$ that is orthogonal to the line $\ell (A_1,A_2)$.
Now let $\tilde{A}_4=\tilde{A}_4(x)$ be the
point on $l_x$ satisfying $|\tilde{A}_4H|
=f_{t,S_3}(x)$,
and let $\tilde{A}_3=\tilde{A}_3(x)$ be the point on $l_x$ satisfying
$|\tilde{A}_3H|
=f_{t,S_4}(x)$.
By $S_4 \geq S_3$, we have $\tilde{A}_4\in[\tilde{A}_3,H]$, and thus also
$|H{\tilde{A}}_4| \le |H{\tilde{A}}_3|$.

Let $\{ \gamma(\varphi) \}$ 
be the one-parameter group of rotations about the
line $\ell (A_1,A_2)$ 
through the angles $\varphi $, in some definite sense of rotation, 
with $\gamma (0)$ being the identity.
For $\varphi \in [0, \pi]$ we consider the point $A_3=A_3(x,\varphi)
:=\gamma(\varphi)(\tilde{A}_3(x))$.
Note that $A_3(x,\pi)\in \sigma^-$. Consider also $A_4(x,\varphi)
:=\tilde{A}_4(x)$, $A_1(x,\varphi):=A_1$ and $A_2(x,\varphi):=A_2$.

We are going to prove that there exists an $(x,\varphi)\in [0,t]\times
[0,\pi]$ 
such that the 
 (possibly degenerate)
tetrahedron $T=T(x, \varphi):=A_1(x,\varphi)\allowbreak A_2(x,\varphi)
\allowbreak
A_3(x,\varphi)\allowbreak A_4(x,\varphi)$ has
facet areas $S_1,S_2,S_3,S_4$, where $S_i$ is the area of the facet 
opposite to $A_i(x,\varphi)$.

Let $s_i(x,\varphi)$ be the area of the facet of $T(x, \varphi)$  that is
opposite to the vertex $A_i=A_i(x,\varphi)$.
By our construction, we obviously have
$s_4(x,\varphi)=S_4$ and $s_3(x,\varphi)=S_3$.

Let us define functions $f_1,f_2\colon [0,t]\times[0,\pi]\rightarrow 
\mathbb{R}$ as
follows.
\begin{equation}\label{ff}
f_1(x,\varphi)=s_2(x,\varphi)-S_2, \quad f_2(x,\varphi)=s_1(x,\varphi)-S_1.
\end{equation}
It is easy to see that
\begin{align}\label{eqq1}
f_1(x,0)+f_2(x,0)&=S_4-S_1-S_2-S_3 < 0,
\\\label{eqq2}
f_1(x,\pi)+f_2(x,\pi)&=S_3+S_4-S_1-S_2 \geq 0.
\end{align}
Now we check that
\begin{equation}\label{eqq3}
f_1(0,\varphi)< 0, \quad f_2(t,\varphi) < 0,
\end{equation}
for all $\varphi \in [0,\pi]$.

For the first inequality in (\ref{eqq3}),
we note that the point $\tilde{A}_3(0)$ satisfies
$|\tilde{A}_3(0)A_1|=f_{t,S_4}(0)=h_{t,S_4}$,
and the point $\tilde{A}_4(0)$ satisfies $|\tilde{A}_4(0)A_1|=f_{t,S_3}(0)
=: h_{t,S_3}\leq h_{t,S_4}$.
Therefore, for every $\varphi \in [0,\pi]$, the triangle
$\Delta A_1A_3(0,\varphi)A_4(0,\varphi)$ satisfies $|A_1A_3(0,\varphi)| =
h_{t,S_4}$ and
$|A_1A_4(0,\varphi)|\leq h_{t,S_4}$. By Lemma~\ref{hs4} and (\ref{tdef}),
we get
$$
s_2(0,\varphi)\leq 2\arctan  \frac{\cosh h_{t,S_4} -1}
{2\sqrt{\cosh h_{t,S_4}}} < S_1 \leq S_2.
$$
Therefore, $f_1(0,\varphi)=s_2(0,\varphi)-S_2 \le s_2(0,\varphi)-S_1<0$
for all $\varphi \in [0,\pi]$.

For the second inequality of (\ref{eqq3}),
we replace in the above argument $A_1,\tilde{A}_3(0)$, and $\tilde{A}_4(0)$
by $A_2,\tilde{A}_3(t)$, and $\tilde{A}_4(t)$, respectively. We get
$f_2(t,\varphi)=s_1(t,\varphi)-S_1<0$ for all $\varphi \in [0,\pi]$.

Taking into account the inequalities (\ref{eqq1}--\ref{eqq3}),
by applying Lemma~\ref{index}
with $(u_1,u_2,$
\newline
$v_1,v_2):=(1,0,0,1)$,
we find an $(x,\varphi)\in [0,t]\times [0,\pi]$ such that
$f_1(x,\varphi)=f_2(x,\varphi)=0$. This means that $s_1(x,\varphi)=S_1$ and
$s_2(x,\varphi)=S_2$
for the corresponding (possibly degenerate) tetrahedron $T$.

{\bf{2.}}
Now we consider the case when hypothesis~(\ref{fc2}) of Theorem~\ref{exhyp}
holds. We use the same
construction of the tetrahedron $T$ as in the first case.
For the functions $f_1$ and $f_2$ defined by (\ref{ff}), we get
the inequalities
(\ref{eqq1}) and (\ref{eqq2}).
Now we check that
\begin{equation}\label{eqq4}
f_2(0,\varphi)\geq 0, \quad f_1(t,\varphi) \geq 0
\end{equation}
for all $\varphi \in [0,\pi]$.
For this we note that
\begin{equation}\label{s_1}
s_1(0,\varphi)\geq s_1(0,0)=S_4-S_3 \geq S_2 \ge S_1 
\end{equation}
and
\begin{equation}\label{s_2}
s_2(t,\varphi)\geq s_2(t,0)=S_4-S_3 \geq S_2 
\end{equation}
for all $\varphi \in [0,\pi]$
provided $t$ is sufficiently large, as we will prove. Of course, we have to
prove only the first inequalities in (\ref{s_1}) and (\ref{s_2}).

We will investigate $s_1(0, \varphi )$. (The case of $s_2(t, \varphi )$ is
analogous.) Recall that $|H{\tilde{A}}_4| \le |H{\tilde{A}}_3|$, which implies
$h_{t,S_3}=|A_1(0, \varphi )A_4(0, \varphi )| \le
|A_1(0, \varphi )A_3(0, \varphi )| = h_{t,S_4}$. For $t$ fixed but $\varphi
\in [0, \pi ] $ variable, the length of the third side of the triangle
$\Delta A_1(0, \varphi )\allowbreak 
A_3(0, \varphi )\allowbreak A_4(0, \varphi )$ lies in the range
$$
|A_3(0, \varphi )A_4(0, \varphi )| \in [h_{t,S_4} - h_{t,S_3},
h_{t,S_4} + h_{t,S_3}] .
$$
Therefore, to show
\begin{equation}\label{s_1(0,0)}
s_1(0,\varphi ) \ge s_1(0, 0 ),
\end{equation}
we must
show the following.
Let $a:=
|A_2(0, \varphi )A_4(0, \varphi )|$ and $b:=
|A_2(0, \varphi )A_3(0, \varphi )|$. Then $t \le a \le b$,
since 
$$
\begin{cases}
\cosh t \le \cosh a =\cosh t  \allowbreak\cdot\allowbreak 
\cosh|A_1(0,\varphi )A_4(0, \varphi )|\allowbreak \\
\le \allowbreak\cosh t  \cdot 
\cosh |A_1(0,\varphi ) A_3(0, \varphi )|=\cosh b .
\end{cases}
$$
Let $
c:= 
|A_1(0, \varphi )A_3(0, \varphi )|\allowbreak-\allowbreak
|A_1(0, \varphi )A_4(0, \varphi )|=h_{t,S_4} - h_{t,S_3}$.
Then the area of the triangle with sides $a,b,c$
is less than or equal to the area of the triangle with the same first two
sides $a,b$ and with third side in the interval
$$
[h_{t,S_4}-h_{t,S_3},h_{t,S_4}+h_{t,S_3}] \subset
[h_{t,S_4}-h_{t,S_3}, 2h_{t,S_4}] \subset
[h_{t,S_4}-h_{t,S_3}, {\rm{const}}].
$$

For the last inclusion observe the following. By the geometric interpretation,
if $S_4$ is fixed and $t$ is above the bound
$ 2\arsinh[  (\tan S_4)/2  ]$ from~(\ref{tS_4}) and  increases,
then $h_{t,S_4}$ decreases. Therefore, $h_{t,S_4}$ {\emph{remains bounded for
fixed $S_4$ if $t$ increases from its originally chosen value $t_0$, say, to
infinity}}.

Inequality (\ref{s_1(0,0)}) is proved if we show the following
monotonicity property.
Fixing the
first two sides $a,b$
and varying the third side $x$ in the interval
$[h_{t,S_4}-h_{t,S_3},$ const$]$, the area is a monotonically
increasing function of $x$. 

Now we apply Lemmas \ref{BKM1} and \ref{BKM2} to the triangle 
with sides $a,b,x$.
We need to show that its area is increasing for $x \in [b-a, $ const$]$, where
we know from the preceding considerations that $0 \le b-a \le $ const. 
By these Lemmas, this area-increasing property
is satisfied for $x \in [b-a, x_{\max}]$,
where $x_{\max}$ is defined by
$\cosh ^2 (x_{\max}/2)=( \cosh a + \cosh b  ) /2$.
Thus, to complete the argument, it suffices to show that
$x_{\max} \ge $ const, i.e., that
$x_{\max} \to \infty $ for $t \to \infty $.

We estimate $x_{\max}$ from below. We have
$$
\cosh ^2 (x_{\max}/2) = \left( \cosh a  + \cosh b  \right) /2 \ge
\cosh a  > e^a / 2 ,
$$
hence
$$
(e^{x_{\max}/2})^2 > \cosh ^2 (x_{\max}/2) > e^a / 2 ,
$$
and hence
$$
x_{\max} > a - \log 2 \ge t - \log 2 \to \infty ,
$$
as we wanted to show. Thus, (\ref{eqq4}) is proved.

Taking in account inequalities (\ref{eqq1}), (\ref{eqq2}), and (\ref{eqq4}),
we can apply Lemma~\ref{index}  with $(u_1,u_2,v_1,v_2)=(0,-1,-1,0)$
to find a point $(x,\varphi)\in [0,t]\times [0,\pi]$ such that
$f_1(x,\varphi)=f_2(x,\varphi)=0$. This means that $s_1(x,\varphi)=S_1$ and
$s_2(x,\varphi)=S_2$
for the corresponding (possibly degenerate) tetrahedron $T$.

{\bf{3.}}
It remains to exclude degeneration of our tetrahedron. Our construction
yields degenerate tetrahedra only for $\varphi =0$ and $\varphi = \pi $.
In the first case, $S_4=S_1+S_2+S_3$, which contradicts our hypotheses. In the
second case, $S_4+S_3=S_2+S_1$, which implies $\pi > S_4=S_3=S_2=S_1 > 0$. 
(By the way, this can occur only for case~(\ref{fc1}) of the theorem.)
 Then a
suitable regular tetrahedron satisfies the conclusion of the theorem.
\end{proof}


\begin{remark} Let us apply the construction in the proof of
Theorem~\ref{exhyp}
to the numbers $S_i \varepsilon ^2$ and
$t \varepsilon $ rather than $S_i$ and $t$,
where $\varepsilon \to 0$. Then, for sufficiently small $\varepsilon >0$,
hypothesis (\ref{fc1}) from Theorem~\ref{exhyp}
holds, and as an analogue of (\ref{tS_4}) we have
$2\sinh (t\varepsilon /2)> \tan (S_4 \varepsilon ^2)$.  In the
limit, we obtain a Euclidean tetrahedron with facet areas $S_i$ and
one edge
of length $t$.
Letting $t \to \infty $
gives another proof for the last statement of Theorem E
for ${\mathbb R}^3$ (existence of tetrahedra of arbitrarily small positive
volume). Namely,
the heights of the two facets meeting at the edge of length $t$,
corresponding to this edge, 
are $O(1/t)$. Thus, the tetrahedron is included
in a right circular cylinder of height $t$ and radius $O(1/t)$.
Hence, the volume of the tetrahedron
is $O(1/t)$. 
Degeneration is excluded as in
Step~{\bf{3}} of the above proof of Theorem~\ref{exhyp}.
\end{remark}


\section{Proofs for the spherical case}


Recall our convention about the notion of \emph{simplices in} ${\mathbb{S}}^n$ 
at the beginning of Section~\ref  {spherical}.


\subsection{Proof of Proposition \ref{sphnec}}
\label{6.1}
{\bf{1.}} We begin with the proof of the first inequality.
Let the facets of $P$ be
$F_i$. Their areas satisfy the equation
\begin{equation}\label{inin}
S_i={\text{const}}_n \cdot \int |F_i \cap \mathbb{S}^1|\, d\mathbb{S}^1.
\end{equation}
Here $| \cdot |$ denotes cardinality
and ${\text{const}}_n >0$.
The integration is taken with respect to the unique $O(n+1)$-invariant
probability Borel
measure (for the standard embedding ${\mathbb{S}}^n \subset
{\mathbb{R}}^{n+1}$) on
the manifold of all great-$\mathbb{S}^1$'s in $\mathbb{S}^n$
\cite[Ch. 18, \S6, 1]{San}. 

In the integration, we may disregard those $\mathbb{S}^1$'s
that lie
in the great-$\mathbb{S}^{n-1}$'s spanned by the facets $F_1,\ldots ,F_m$, 
since they have
measure
$0$. By the same reason, we may disregard those $\mathbb{S}^1$'s that pass
through the relative boundary of $F_i$ (in the great-$\mathbb{S}^{n-1}$
spanned by
it) for all $i \in \{ 1,\dots , m \} $ simultaneously.
If an $\mathbb{S}^1$ does not lie in the
above great ${\mathbb{S}}^{n-1}$'s
and does not intersect the above relative boundaries then it
cannot contain two opposite
points of any $F_i$. Namely, 
since $F_i$ lies in a closed half-${\mathbb{S}}^{n-1}$,
both of these points would otherwise lie in the relative boundary of $F_i$ 
taken with
respect to the great-${\mathbb{S}}^{n-1}$ spanned by it.
If such an $\mathbb{S}^1$ enters $P$ at some point
$p \in F_m$ it
must also
leave $P$, through some other facet (since this ${\mathbb{S}}^1$ does not
contain two opposite points of $F_m$)
till it comes back to $p$. This
holds even in the degenerate case, that is,
when some portion of $F_m$ is a doubly
counted boundary of $P$ either as a ``flat'' piece of $P$ or as bounded
from both sides by the interior of $P$.

These considerations imply that the integral (\ref{inin}) for $i=m$
is at most the sum of the integrals for all $1 \le i \le
m-1$. Thus, (\ref{inin}) gives our inequality.

Now assume that
$P$ lies in an open half-$\mathbb{S}^n$ (in the open northern hemisphere, say)
but does not degenerate to the
doubly-counted facet $F_m$.

Let ${\mathbb{S}}_m$ be the great-${\mathbb{S}}^{n-1}$ spanned by $F_m$. If
$\cup _{i=1} ^{n-1} F_i \not\subset {\mathbb{S}}_m$, then there exists an
$x \in \cup _{i=1} ^{m-1} {\text{rel\,int}}\, F_i$ such that
$x \not\in {\mathbb{S}}_m$. Also, there exists a $y \in {\mathbb{S}}_m
\setminus F_m$ that also lies in the open northern hemisphere. Then
$x \ne y$, and
since both
$x$ and $y$ lie in the open northern hemisphere, the
great-${\mathbb{S}}^1$  $xy$ through these two points exists.

The set of ${\mathbb{S}}^1$'s transversally intersecting $\cup _{i=1}
^{m-1}{\text{\rm{rel\,int}}}
F_i$ but not intersecting $F_m$ contains some neighbourhood of the
great-${\mathbb{S}}^1$ $xy$ in the set of all great-${\mathbb{S}}^1$'s
and has therefore  positive measure.
This implies the strict inequality in this case.

If $\cup _{i=1} ^{n-1} F_i \subset {\mathbb{S}}_m$, then in both cases
$\cup _{i=1} ^{n-1} F_i \not\subset F_m$ and
$\cup _{i=1} ^{n-1} F_i \subset F_m$, we have strict inequality unless $P$
degenerates to the doubly counted facet $F_m$. However, this degeneration 
was excluded.

{\bf{2.}} We turn to the proof of the second inequality. We again use the
formula (\ref{inin}). Again we disregard those $\mathbb{S}^1$'s that lie in
the great-$\mathbb{S}^{n-1}$'s spanned by any facet $F_i$
of $P$, as well as those
$\mathbb{S}^1$'s that pass through the relative boundary of any facet $F_i$ of
$P$. We compare the sum
of the right hand sides of (\ref{inin}) for all $1 \le i \le m$ with the
analogous integral
when in the right hand side of (\ref{inin})
we take a great-$\mathbb{S}^{n-1}$ rather than $F_i$.

Clearly, for a great-$\mathbb{S}^{n-1}$, the cardinality of its intersection
with a great-$\mathbb{S}^1$ is almost always $2$. For the $\mathbb{S}^1$'s
that were not disregarded and for any $i$,
the cardinality $|F_i \cap \mathbb{S}^1|$ is at most $1$, since a
great-${\mathbb{S}}^1$ cannot contain two opposite points of $F_i$
(see part~{\bf{1}}
of this proof).
If $P$ is not degenerate,
one great-$\mathbb{S}^1$ cannot transversally intersect the interiors of three
facets $F_i$. Namely, at each of these intersection points, it passes either
into $P$ or out of $P$
(with some definite orientation of our ${\mathbb{S}}^1$).
Thus, there would be at least
four points of intersection, and the intersection of $P$ and this
great-$\mathbb{S}^1$ would be the union of at least two disjoint
non-trivial arcs. However, this
contradicts convexity of $P$.
Hence, the sum of the integrands in 
(\ref{inin}) over all facets $i=1,\dots ,m$ is at most $2$. For
degenerate $P$, the same inequality holds.
This implies
the second inequality of the proposition.

If $P$ lies in an open half-$\mathbb{S}^n$, then the set of
$\mathbb{S}^1$'s intersecting the boundary
of the open half-$\mathbb{S}^n$ but not intersecting
$\cup _{i=1}^m F_i$
has a positive measure. (Namely, any great-$\mathbb{S}^1$ sufficiently close
to the boundary of the open half-$\mathbb{S}^n$ has this property.)
This implies the
strict inequality in this case.
\qed 


\begin{remark}
Part {\bf{1}} of the above proof of Proposition \ref{sphnec}
extends also for ${\mathbb{H}}^n$ and yields
Proposition \ref{necne},
however without the case of equality. We have to
use also
\cite[Ch. 18, \S 6, 1]{San}, and instead of $\mathbb{S}^1$,
we have to take a segment of
a fixed positive length $t$ and then let $t$ tend to infinity.
However, we preferred to give the elementary proof for Proposition \ref{necne}.
\end{remark}


\begin{remark}
\label{t1-t3}
Clearly, the argument for the inequality $t_1 \le t_3$ in the proof of
Proposition \ref{nonexhyp}
is valid also for ${\mathbb{R}}^3$ and $\mathbb{S}^3$. However, for
${\mathbb{R}}^3$, this inequality is easy to show, see below.
It is also easy to show for $\mathbb{S}^3$, provided that
each facet is contained in a closed
half-${\mathbb{S}}^2$ and has at least three sides --- in particular if the
polyhedron is contained in an open half-${\mathbb{S}}^3$ --- see below.
For ${\mathbb{R}}^3$, the sum of
the angles of the $k_i$-gon $F_i$ is $t_1=(k_i-2) \pi $. Also, $F_i$ has $k_i$
neighbouring faces,
each of which has angle  sum at least $\pi $, so $t_1=(k_i-2) \pi <
k_i \pi \le t_3$. Similarly,
for $\mathbb{S}^3$, with the above hypotheses, the sum of the angles of $F_i$
is $t_1=S_i+(k_i-2) \pi $, while every other facet $F_j$
has an angle sum $S_j+(k_j-2)\pi \ge S_j + \pi$, since $k_j \ge 3$. 
So the sum of
the angles of the other facets $F_j$ is at least $\sum _{j\ne i}
(S_j + \pi )$,
and hence,
$\sum _{j\ne i} S_j + k_i \pi \le t_3$. 
Therefore, $t_1=S_i+(k_i-2) \pi < S_i + k_i \pi 
\le (\sum _{j\ne i} S_j) + k_i \pi \le
t_3$. Here, we used the first inequality of Proposition \ref{sphnec},
which implies $S_i \le \sum _{j\ne i} S_j$, provided every facet lies
in some closed half-${\mathbb{S}}^2$.
\end{remark}


\subsection{Proof of Theorem \ref{exsph}}

Now we give the spherical analogues of Lemmas \ref{hs1} and~\ref{hs4}.

\begin{lemma}\label{sphs1}
  The area $S$ of a right triangle $\Delta ABC \subset \mathbb{S}^2$ 
  with angle $\angle ACB=\pi/2$ and side lengths $|AC|=b$ and
  $|BC|=a$ (where $0<a,b \le \pi $)
  fulfills the equation
$$
\tan S =\frac{\sin a \cdot \sin b }{\cos a +\cos b } 
$$
if $a+b\ne \pi $.
For $a+b= \pi $, the area is $S = \pi /2$.
\end{lemma}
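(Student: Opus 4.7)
The plan is to proceed in close analogy with the proof of Lemma \ref{hs1}, replacing the hyperbolic trigonometric relations by their spherical counterparts. The spherical excess formula gives the area of $\Delta ABC$ as
\begin{equation*}
S = \alpha + \beta + \gamma - \pi,
\end{equation*}
where $\alpha = \angle CAB$, $\beta = \angle CBA$, $\gamma = \angle ACB$. Since $\gamma = \pi/2$, this simplifies to $\alpha + \beta = S + \pi/2$, and therefore
\begin{equation*}
\tan(\alpha + \beta) = \tan(S + \pi/2) = -\cot S.
\end{equation*}

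Next, I would invoke Napier's rules for a right spherical triangle with the right angle at $C$, which give $\tan \alpha = (\tan a)/\sin b$ and $\tan \beta = (\tan b)/\sin a$ (see, e.g., \cite[p.~238]{Cox} for analogous identities). Substituting into the addition formula
\begin{equation*}
\tan(\alpha + \beta) = \frac{\tan \alpha + \tan \beta}{1 - \tan \alpha \tan \beta},
\end{equation*}
clearing denominators, and using the identity $\sin^2 a \cos b + \sin^2 b \cos a = (\cos a + \cos b)(1 - \cos a \cos b)$, the right-hand side collapses to $-(\cos a + \cos b)/(\sin a \sin b)$. Equating with $-\cot S$ and inverting yields
\begin{equation*}
\tan S = \frac{\sin a \cdot \sin b}{\cos a + \cos b},
\end{equation*}
valid whenever $\cos a + \cos b \ne 0$, i.e.\ whenever $a + b \ne \pi$ (here I use that $0 < a, b \le \pi$, so $\cos a + \cos b = 2\cos\frac{a+b}{2}\cos\frac{a-b}{2}$ vanishes only if $a+b = \pi$).

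For the remaining case $a + b = \pi$, I would argue directly: then $\sin a = \sin b$, $\cos b = -\cos a$, so Napier's rules give $\tan \alpha = 1/\cos a$ and $\tan \beta = -1/\cos a$. Hence $\tan \alpha + \tan \beta = 0$ with both $\alpha, \beta \in (0, \pi)$, forcing $\alpha + \beta = \pi$ and thus $S = \pi/2$ as claimed. I do not expect any serious obstacle; the only mild care needed is to verify the Napier identities in the specific convention used (with $a = |BC|$ opposite to $A$ and $b = |AC|$ opposite to $B$) and to handle the sign conventions when $a$ or $b$ exceeds $\pi/2$, where $\cos$ and $\tan$ change sign but the algebraic identity above remains valid throughout the admissible range.
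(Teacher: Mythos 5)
Your proposal is correct and follows essentially the same route as the paper: the authors' proof of this lemma simply says to proceed "analogously as in Lemma~\ref{hs1}", whose proof uses exactly the angle excess/defect formula for the area together with the expressions $\tan\alpha=(\tan a)/\sin b$, $\tan\beta=(\tan b)/\sin a$ (the spherical counterparts of the hyperbolic identities cited from Coxeter) and the tangent addition formula. Your explicit algebra, including the factorization $\sin^2 a\cos b+\sin^2 b\cos a=(\cos a+\cos b)(1-\cos a\cos b)$ and the separate treatment of $a+b=\pi$, fills in precisely the routine computation the paper leaves to the reader.
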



\begin{lemma}\label{sphs4}
Let $0<d \le \pi /2$.
Assume that $\Delta ABC \subset \mathbb{S}^2$ is a triangle such that
$|AB|\leq d$ and $|AC|\leq d$.
Then the area $S$ of this triangle is bounded above by the inequality
$$
S \leq 2\arctan \frac{1-\cos d }{2\sqrt{\cos d }}
$$
if $d \ne \pi/2 $.
For $d= \pi /2$ we have $S \le \pi $.
\end{lemma}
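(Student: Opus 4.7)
My plan is to mirror the proof of the hyperbolic analog Lemma~\ref{hs4}, replacing $\cosh$, $\sinh$, $\tanh$ by $\cos$, $\sin$, $\tan$ throughout. The first step is to argue that without loss of generality $|AB|=|AC|=d$. For this I would use a monotonicity observation: extending $B$ along the great-circle ray from $A$ through $B$ weakly enlarges the spherical triangle $\Delta ABC$ as a subset of $\mathbb{S}^2$, provided the configuration stays within a closed hemisphere. This hemisphere hypothesis is automatic here because $|AB|,|AC|\le d\le\pi/2$ places all three vertices in the closed ball of radius $\pi/2$ around $A$. Applying this extension to both $B$ and $C$ reduces us to the equidistant case.

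With $|AB|=|AC|=d$, I would drop the perpendicular $AH$ from $A$ to the line $\ell(B,C)$. By symmetry $H$ is the midpoint of the arc $BC$, so $|BH|=|CH|$, and $\Delta ABC$ is cut into two congruent right triangles $\Delta AHB$ and $\Delta AHC$ whose hypotenuse is $d$. Setting $x:=\cos|AH|$ and $y:=\cos|BH|$, the spherical Pythagorean theorem gives $xy=\cos d$, and $x,y\in[\cos d,1]$ because legs of a right triangle do not exceed the hypotenuse $d\le\pi/2$. When $d<\pi/2$, one has $|AH|+|BH|\le 2d<\pi$, so the hypothesis of Lemma~\ref{sphs1} is satisfied for each half-triangle, and that lemma yields
$$
\tan^2(S/2)=\frac{(1-x^2)(1-y^2)}{(x+y)^2}.
$$

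It remains to maximize the right-hand side subject to the constraint $xy=\cos d$. Substituting $y=\cos d/x$ and setting $u=x^2$ reduces the problem to maximizing the rational function $f(u)=(1-u)(u-\cos^2 d)/(u+\cos d)^2$ over $u\in[\cos^2 d,1]$. A direct differentiation shows that $f'(u)$ is proportional to $\cos d - u$, so the unique maximum is attained at $u=\cos d$, i.e.\ at $x=y=\sqrt{\cos d}$. Evaluating gives $\tan(S/2)=(1-\cos d)/(2\sqrt{\cos d})$, which is exactly the claimed bound.

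The case $d=\pi/2$ must be handled separately, because the formula of Lemma~\ref{sphs1} breaks down when $|AH|+|BH|=\pi$. After the same reduction to $|AB|=|AC|=\pi/2$, the vertex $A$ is a pole of the great circle through $B$ and $C$, so $\Delta ABC$ is an ``orange slice'' of area equal to $\angle BAC$; the spherical triangle inequality $|BC|\le|AB|+|AC|=\pi$ forces $\angle BAC\le\pi$, giving $S\le\pi$, which is also the limit of the formula as $d\to(\pi/2)^-$. The most delicate part of the plan is the initial monotonicity reduction: while geometrically clear, one must verify rigorously that within a closed hemisphere, extending one vertex of the triangle along its ray from $A$ produces a set-theoretic superset, which in turn uses that the minor arc $BC$ stays inside the enlarged triangle — a property that can fail once the configuration crosses the equator opposite $A$.
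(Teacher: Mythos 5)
Your proof is correct and takes essentially the same route as the paper, whose own proof of this lemma consists of saying that one proceeds analogously to the hyperbolic Lemmas~\ref{hs1} and~\ref{hs4}: reduce to $|AB|=|AC|=d$, bisect by the perpendicular from $A$ into two congruent right triangles, and maximize $\tan^2(S/2)=(1-x^2)(1-y^2)/(x+y)^2$ subject to $xy=\cos d$, with the maximum at $x=y=\sqrt{\cos d}$. Your extra justifications (the hemisphere/convexity argument for the reduction, the explicit differentiation, and the elementary case $d=\pi/2$) are all sound and merely fill in details the paper leaves implicit.
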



\begin{proof}[Proof of Lemmas \ref{sphs1} and \ref{sphs4}]
The case $a+b= \pi $ of Lemma~\ref{sphs1}
and the case
$d= \pi /2$
of Lemma~\ref{sphs4} is elementary.
In the remaining cases $\cos d >0$ for Lemma~\ref{sphs4}, and
we proceed analogously as in Lemmas \ref{hs1} 
and \ref{hs4}.
\end{proof}


To prove Theorem~\ref{exsph}, we use an analogous construction as for
Theorem~\ref{exhyp}.

{\bf{Construction 2.}}
Let 
\begin{equation}\label{S2}
S \in (0, \pi /2 ],
\end{equation} 
and choose
$$
t=\pi /2.
$$
This choice is motivated as follows.
We will 
apply
Lemma~\ref{BKM1} to triangles with sides $a,b \le \pi /2$,
but Lemma~\ref{BKM1} does not hold for $a=b \in ( t, \pi ) = ( \pi /2, \pi )$. 
Thus,
$t= \pi /2$ is the largest value
for which 
our proof applies. 
Also, $S$ will be the area of a spherical
triangle contained in a spherical triangle with three sides $\pi /2$, which
explains the constraint (\ref{S2}).
Now, we define a function
\begin{equation}\nonumber 
f_{t,S}\colon [0,t]\rightarrow \mathbb{R}
\end{equation}
as follows. For any $x\in [0,t]$,
consider the function
$$
g_x(y):=\arctan \frac{\sin x \cdot \sin y }{\cos x +\cos y }+
\arctan \frac{\sin(t-x)\cdot \sin y }{\cos(t-x)+\cos y } ,
$$
defined for $y \in [0, \pi /2]$.
It is easy to see that $(d/dy)g_x(y)>0$
for $y\in [0,\pi /2 )$,
$g_x(0)=0$, and
$$
g_x( \pi /2)=\pi /2 \ge S ,
$$
for all $x \in [0,t]$.
Therefore, there exists a unique $\tilde{y}\in (0,\pi /2]$
such that $g_x(\tilde{y})=S$. We put
$$
f_{t,S}(x):=\tilde{y}\in (0,\pi /2].
$$
Now we investigate some properties of this function.
Obviously $f_{t,S}$ is continuous on $[0,t]$ (moreover, is $C^1$  on
$(0,t)$), and
$f_{t,S}(x)=f_{t,S}(t-x)$.

Here is the geometric interpretation of $f_{t,S}$. Consider a 
triangle $\Delta ABC \subset \mathbb{S}^2$ with the
following properties.
\begin{enumerate}
\item  $|AB|=t$,
\item the area of $\Delta ABC$ is $S$,
\item
$C$ has an orthogonal projection $H$
to the line
$\ell (A,B)$ such that $H$ lies in the segment $[A,B]$ and
$|AH|=x\in [0,t]$. (Observe that there are at least two orthogonal
projections of $C$ to $\ell (A,B)$.)
\end{enumerate}
Then it is easy to see
(using Lemma~\ref{sphs1}) that 
 $|CH|=f_{t,S}(x)$.
It is also easy to see that for $0 < \tilde{S}<S$
and for every $x\in [0,t]$,
 we have
$f_{t,\tilde{S}}(x)<f_{t,S}(x)$.

The boundary values
$$
h_{t,S}:=f_{t,S}(0)=f_{t,S}(t)
$$ 
are easy to determine.
By the geometric interpretation, this is the
third side of a spherical triangle with two other
sides of length $\pi /2$ and area $S$, i.e.,
$$
h_{t,S}=S.
$$


\begin{proof}[Proof of Theorem~\ref{exsph}]
{\bf{1.}}
First we consider the case when hypothesis (\ref{fc1'}) of Theorem~\ref{exsph} 
holds.
We roughly follow the lines of the proof of Theorem~\ref{exhyp} for
the analogous case when
hypothesis~(\ref{fc1}) holds.

We have
\begin{equation}\label{tan}
\frac{1-\cos h_{t,S_4}}{2 \sqrt{\cos h_{t,S_4}}}=
\frac{1-\cos S_4}{2 \sqrt{\cos S_4}} \le \tan(S_1/2),
\end{equation}
by the hypothesis of the theorem.
From this point onwards, the construction is the same as in 
Theorem~\ref{exhyp}.

We still have to show that our tetrahedron $T$ satisfies our convention 
about simplices in ${\mathbb{S}}^n$ (see the beginning 
of Section~\ref  {spherical}).
A convex combinatorial simplex in an \emph{open}
half-${\mathbb{S}}^n$ is always considered as a simplex in ${\mathbb{S}}^n$.
Let $\mathbf{e}_1,\mathbf{e}_2,\mathbf{e}_3,\mathbf{e}_4$ denote the usual 
unit basis
vectors,
and let $x_1,x_2,x_3,x_4$ denote the corresponding coordinates. 
We set $A_1:={\bf{e}}_1$ and $A_2:={\bf{e}}_2$.
The
rotation about $\ell (A_1,A_2)$ in ${\mathbb{S}}^3$
maps $A_3:={\mathbf{e}}_3$ to ${\mathbf{e}}_3 \cos \varphi +
{\mathbf{e}}_4 \sin \varphi $, 
say. Then $T$ is in
the closed half-${\mathbb{S}}^3$ defined by the inequality $x_1+x_2 \ge 0 $.
Moreover, 
if $S_3 \le S_4
< \pi /2$, then $T$ is contained in the open half-${\mathbb{S}}^3$ 
defined by the inequality $x_1+x_2>0$, and we are done. 
If $S_3 \le  S_4 =
\pi /2$, a slight perturbation of the open half-${\mathbb{S}}^3$
given by $x_1+x_2>0$
contains $T$ for all $\varphi \in [0, \pi ]$, and we are done.
If $S_3=S_4= \pi /2$,
and $0
\le \varphi < \pi /2$ is {\emph{fixed}}, 
then also a slight perturbation of the open    
half-${\mathbb{S}}^3$ given by $x_1+x_2>0$ contains $T$ and then 
we are also done.
The case
$S_3=S_4=\varphi = \pi /2$ will be treated in part~{\bf{3}} below. 

We have to observe that from the construction, we have
$|A_3(x, \varphi )
A_4(x, \varphi )| \le f_{t,S_3}(x)+f_{t,S_4}(x) \le 2f_{t,S_4}(x) \le
\pi $. Thus, the edge $[A_3(x, \varphi ),
A_4(x, \varphi )]$ of our tetrahedron is in the closed
angular domain swept by $\gamma ( \varphi )\sigma ^+$ for $\varphi \in [0,
\pi ]$, as in the hyperbolic case. 
(This explains the inequality
$S_4 \le \pi /2$ of the theorem --- and thus
also the inequality $S \le \pi /2$ in the construction:
without this inequality of the theorem, the last sentence would not be valid.
Moreover, let $S_3=S_4 \in ( \pi /2, \pi )$. Then consider
$|A_3(x, \varphi ) A_4(x, \varphi )| $ defined not as a distance
but defined by analytic continuation from $\varphi $'s close to $0$, i.e., by
retaining the geometry of the figure.
Then 
for $\varphi = \pi $
we would have
$|A_3(x, \varphi ) A_4(x, \varphi )| = 2f_{t,S_4}(x)  \in ( \pi , 2 \pi )$. This
would imply that the tetrahedron $T$ defined by the same analytic 
continuation, i.e., by
retaining the geometry of the figure, would not be convex.)

We define $s_i(x, \varphi )$ (for $1 \le i \le 4$) and $f_i(x, \varphi )$
(for $i=1,2$) as in the proof of Theorem~\ref{exhyp}.
The formulas
\begin{equation}\label{eqq1'}
f_1(x,0)+f_2(x,0)=S_4-S_1-S_2-S_3 < 0,
\end{equation}
\begin{equation}\label{eqq2'}
f_1(x,\pi)+f_2(x,\pi)=S_3+S_4-S_1-S_2 \geq 0
\end{equation}
follow like (\ref{eqq1}) and (\ref{eqq2})
in the hyperbolic case.
The formulas
\begin{equation}\nonumber
f_1(0,\varphi) \le 0, \quad f_2(t,\varphi) \le 0,
\end{equation}
for all $\varphi \in [0,\pi]$
follow from Lemma~\ref{sphs4} similarly as in the hyperbolic case from
Lemma~\ref{hs4}. (Observe that here we have non-strict inequalities. Namely, 
in hypothesis~(\ref{fc1'})
of Theorem~\ref{exsph} and in (\ref{tan}), we have non-strict
inequalities,
whereas for the
hyperbolic case,
 we had
strict inequalities
 in hypothesis (\ref{fc1}) of Theorem~\ref{exhyp} and in~(\ref{tdef}).)
Then, 
 we choose
$(u_1,u_2,v_1,v_2):=(1,0,0,1)$ and finish the proof of case~{\bf{1}}
as in the hyperbolic case. Also here, the tetrahedron $T$
is possibly degenerate. 
(Observe that allowing $S_4 > \pi /2$, we could have $h_{t,S_4} > \pi /2$.
Then $|A_1A_3(0, \varphi )|, |A_1A_4(0, \varphi )| \le h_{t,S_4}$ makes
 it impossible to apply Lemma~\ref{sphs4}. 
This explains once more the inequality
$S_4 \le \pi /2$ of the theorem --- and thus
also the inequality $S \le \pi /2$ in the construction --- for case {\bf{1}}.)

{\bf{2.}}
Now we consider the case when hypothesis 
(\ref{fc2'}) of Theorem~\ref{exsph} holds.
We roughly follow the lines of the proof of Theorem~\ref{exhyp} 
when hypothesis~(\ref{fc2}) holds. 

As in Step~{\bf{1}} of this proof, our tetrahedron satisfies
our convention about the notion of a simplex in ${\mathbb{S}}^n$
(defined the beginning of Section~\ref  {spherical}),
unless $S_3=S_4=\varphi = \pi /2$.  This last case
will be handled below in part~{\bf{3}} of this proof.

We obtain the inequalities (\ref{eqq1'}) and (\ref{eqq2'})
similarly to (\ref{eqq1}) and (\ref{eqq2}) in the hyperbolic case.

Now we check that
\begin{equation*}\nonumber
f_2(0,\varphi)\geq 0, \quad f_1(t,\varphi) \geq 0,
\end{equation*}
for all $\varphi \in [0, \pi ]$. As in the hyperbolic case, this reduces to
showing that
\begin{equation}\label{eqq5'}
s_1(0,\varphi)\geq s_1(0,0), \quad s_2(t,\varphi ) \geq s_2(t,0).
\end{equation}
We will investigate $s_1(0, \varphi )$. (The case of $s_2(t, \varphi )$ is
analogous.)
Observe that the distance $|A_3(x, \varphi )A_4(x, \varphi )|$ is a strictly 
increasing function of $\varphi \in [0, \pi ]$, with
$$
|A_3(x, 0 )A_4(x, 0 )|=h_{t,S_4}-h_{t,S_3}=S_4-S_3 ,
$$
and
$$
|A_3(x, \pi )A_4(x, \pi )|=h_{t,S_4}+h_{t,S_3}=S_4+S_3 .
$$
By the geometric interpretation (observe that 
$|A_2(0, \varphi )A_1(0, \varphi )|=
|A_2(0, \varphi )A_3(0, \varphi )|=|A_2(0, \varphi )A_4(0, \varphi )|= \pi /2$),
we have
$$
s_1(0, \varphi )=
\angle A_3(0, \varphi )A_2(0, \varphi ) A_4(0, \varphi )=
|A_3(0, \varphi )A_4(0, \varphi )|,
$$
where the last term is strictly increasing for $\varphi \in [0, \pi ]$. This
shows (\ref{eqq5'}). 
Then, as in the hyperbolic case, we choose
$(u_1,u_2,v_1,v_2):=(0,-1,-1,0)$ and finish the proof of~{\bf{2}}. 
Also here, the tetrahedron $T$ can be degenerate.

{\bf{3.}}
It remains 
\newline
1) to exclude degeneration of our tetrahedron, and 
\newline
2) to verify that
our tetrahedron satisfies
our convention about the notion of a simplex in ${\mathbb{S}}^n$
(see the beginning of Section~\ref {spherical}).

1) is done exactly as in the hyperbolic case in Theorem~\ref{exhyp}. Here
we can even have $\pi \ge S_4=S_3=S_2=S_1>0$. 

For 2) we have to handle the case $S_3 = S_4 = \varphi = \pi /2
$ only. 
From Construction 2 for any $x \in [0,t] = [0, \pi /2 ]$ we have $S \le \pi
/2$, where equality can be attained for any $x \in [0,t]$. The case of
equality is independent of $x \in [0,t]$: namely it is a regular
spherical triangle with angles and sides $\pi /2$. Then the fact that the
angle of the facets $A_1(x, \varphi )A_2(x, \varphi )A_3(x, \varphi )$
and $A_1(x, \varphi )A_2(x, \varphi )A_4(x, \varphi )$ is $\varphi = \pi /2$
uniquely determines our simplex: it is a regular simplex in
${\mathbb{S}}^3$ of edge $\pi /2$ (thus we have also $S_1=S_2= \pi /2$ --- the
vertices can be ${\mathbf{e}}_1, \ldots , {\mathbf{e}}_4$). 
This lies in some open
half-${\mathbb{S}}^3$, therefore is among the simplices that we considered as
simplices in ${\mathbb{S}}^3$.
\end{proof}


\subsection{Proof of Proposition \ref{sphsuff}}
For part~(i),
we start with $n=2$ dimensions. Here, one has a convex $m$-gon in a 
closed half-${\mathbb{S}}^2$,
with sides $S_1, \dots ,S_m$. In fact, it lies in an open half-${\mathbb{S}}^2$,
has strictly convex angles, and is non-degenerate if
$S_m < S_1+ \dots +S_{m-1}$ and $S_1+ \dots +S_m < 2\pi $. For the
degenerate cases, i.e., when $S_m=S_1+ \dots +S_{m-1}$ or $S_1+ \dots +S_m =
2 \pi $, we have a doubly counted segment or a great-${\mathbb{S}}^1$,
respectively. If both equations hold, then we have also a digon, with one
side subdivided to $m-1$ sides. If both
inequalities are strict, we can copy the well-known
proof in \cite[pp.~53--54]{Kr}  --- given there
for the case of ${\mathbb{R}}^2$. Thus, we obtain the existence of such a
convex $m$-gon. Actually, one gets such a convex $m$-gon that is
inscribed in a circle of radius less than $ \pi /2 $.

Now we show how this construction can be lifted to higher dimensions.
For ${\mathbb{S}}^3$, we embed 
the above $m$-gon in its equator, which is an ${\mathbb{S}}^2$.
Each side of
this polygon is then replaced by a facet that is
the union of all meridians (whose lengths are
$\pi $)
meeting that side. The vertices are replaced similarly by edges that are
meridians meeting these vertices.
Additionally, there are two new vertices at the North and South Poles. Then the
ratio of the areas of the spherical digons and the lengths of the
corresponding edges of our polygon
is $V_2({\mathbb{S}}^2)/V_1({\mathbb{S}}^1)$, where $V_i$
denotes $i$-volume. Moreover, the dihedral angles are the same as for the
spherical $m$-gon in ${\mathbb{S}}^2$.

The inductive step is performed analogously for all $n >3$.
The other stated properties are obvious.

 Part~(ii) about simplices is proved by induction on $n$.
For $n=2$, we have a spherical triangle, and we have the same 
degenerate cases as
in part~(i) for $m=n+1=3$.
 Let $n \ge
3$, and assume that the statement of the theorem holds for $n-1$.
With the factor $\alpha := 
V_{n-2}({\mathbb{S}}^{n-2})/V_{n-1}({\mathbb{S}}^{n-1})$,
the numbers
$\alpha(S_1+S_2), \alpha S_3, \ldots,\alpha S_{n+1}$
satisfy the
hypotheses of the proposition for $n-1$.
Arguing as in the second proof of 
Theorem~\ref{main2} in Section~\ref{sec:second} part {\bf{1}},
we establish that
 $S_{\text{new}}:=S_1+S_2 \le S_3+ \dots +S_{n+1}$,
since $n+1 \ge 4$, and, for $j \ge 3$, $S_j \le S_{\text{new}} + S_3 + \ldots
+ S_{j-1}+S_{j+1}+ \ldots + S_{n+1}$.

Therefore, we have on ${\mathbb{S}}^{n-1}$ a polyhedral complex that is a
combinatorial simplex with these facet areas. Again we consider
${\mathbb{S}}^{n-1}$ as the equator of ${\mathbb{S}}^n$.
We replace
each facet and each lower-dimensional face of
this polyhedral complex on ${\mathbb{S}}^{n-1}$
 by the union of all meridians
(whose lengths are $\pi $) meeting it. 
The resulting facets and also lower-dimensional faces are of one dimension 
higher than the original ones.
Additionally, there are two new vertices at the North and South Poles.
Thus, we have obtained a polyhedral complex on ${\mathbb{S}}^n$ with facet areas
$S_1+S_2, S_3, \dots , S_{n+1}$.

This polyhedral complex has
only two vertices at the two poles, and
 $n$ edges joining them. 
Its $n$ facets are obviously not simplices.
The facet of area $S_1+S_2$ has $n-1$ edges.
On each of these $n-1$ edges, we add an extra vertex at the same geographic
latitude. Also we add an extra (convex)
simplicial $(n-2)$-face
with these vertices, together with its faces of lower dimensions, 
that subdivides the facet of area $S_1+S_2$.
For a suitable choice of the
latitude, the facet of area $S_1+S_2$ is subdivided into two new
$(n-1)$-dimensional
simplicial facets of areas
$S_1$ and $S_2$. Omitting the facet of area $S_1+S_2$, with all its faces of
positive dimension, 
these two new facets with all their lower-dimensional faces are added as well.
In each of the other facets (of areas $S_3, \dots ,S_{n+1}$),
 one $(n-2)$-face has been subdivided into two new simplicial
 $(n-2)$-faces.
Thus, these other facets also become
combinatorial $(n-1)$-simplices, by induction with respect to $n$.

The other stated properties follow by the construction.
\qed 

{\bf{Added in proof}}, 20. Oct. 2014. 
Observe that (i) of Theorem F$'$ is preserved when passing to weak$^*$ limits,
so this will present no problem. However, (ii) of Theorem F$'$ does not hold
automatically, we have to prove it for $K'$ (and for $K''$, but that follows
from the considerations for $K'$). For $K$ we have (ii) of Theorem F$'$, that
can be rewritten as $\int _{S^{n-1}} | \langle u, u_0 \rangle | d \mu _K >0$,
for each $u_0 \in S^{n-1}$. Since this integral is a continuous (actually
Lipschitz) function of $u_0$, by compactness 
this integral even has a positive lower bound $a$ independent of $u_0$.
Now let $\varepsilon >0$ be sufficiently small. Let $\{ u_{01}, \dots , u_{0m}
\} \subset S^{n-1}$ be an $\varepsilon $-net. Then for any $u_{0k}$ we have 
$\int _{S^{n-1}} | \langle u, u_{0k} \rangle | d \mu _K \ge a >0$. We may
suppose that each $K_{i_j}$ satisfies 
$\int _{S^{n-1}} | \langle u, u_{0k} \rangle | d \mu _ {K_{i_j}} > a/2 >0$.
For any $u_0 \in S^{n-1}$ we have $\| u_0 -u_{0k} \| < \varepsilon $ for
some $1 \le k \le m$. Then 
$| \int _{S^{n-1}} | \langle u, u_0 \rangle | d \mu _ {K_{i_j}} -
\int _{S^{n-1}} | \langle u, u_{0k} \rangle | d \mu _ {K_{i_j}} | \le
\int _{S^{n-1}} | \langle u, u_0 - u_{0k} \rangle | d \mu _ {K_{i_j}} <
\varepsilon \int _{S^{n-1}} 1 d \mu _ {K_{i_j}} $. We may also suppose that
$\int _{S^{n-1}} 1 d \mu _ {K_{i_j}} < 2 \int _{S^{n-1}} 1 d \mu _K =:2b$.
Then $ \int _{S^{n-1}} | \langle u, u_0 \rangle | d \mu _ {K_{i_j}} > 
\int _{S^{n-1}} | \langle u, u_{0k} \rangle | d \mu _ {K_{i_j}} - 2b \varepsilon
> a/2 - 2b \varepsilon >0$. Hence passing to the limit $K'$ we have 
$ \int _{S^{n-1}} | \langle u, u_0 \rangle | d \mu _ {K'} \ge 
a/2 - 2b \varepsilon >0$. Thus the measure $\mu _ {K'}$ satisfies both (i) and
(ii) of Theorem F$'$, 
hence it is the surface area measure of a convex body $K'$. 

\medskip
{\bf Acknowledgements.}
The authors are indebted to R. Schneider for pointing out in his book
\cite{Schn} the proof of Theorem G, 
used in the second proof of our Theorem~\ref{main2};
to F. Morgan for pointing out the references {\cite{Br}} and {\cite{Mo}};
and to G.~Panina for pointing out the results about the orthocentric simplices
\cite{Ge,EHM}.
The second named author (E. M., Jr.)
expresses his thanks to the Fields Institute,
where part of this work was done during a Conference on Discrete Geometry, in
September 2011.
Research of the fifth author (G.~R.)\ was
initiated at the reunion conference of the special
semester on discrete and computational geometry at the Bernoulli Center, EPFL
Lausanne, February~27--March~2, 2012.


\vspace{10mm}

\end{document}